\newcommand{\R}{\mathbb{R}}
\newcommand{\N}{\mathbb{N}}
\newcommand{\Z}{\mathbb{Z}}
\renewcommand{\d}{\mathrm{d}}
\newcommand{\E}{\mathbb E}
\newcommand{\Sp}{\mathbb S}
\newcommand{\Beta}{\mathrm{B}}
\newcommand\dx{\mathrm{d}}
\newcommand\adj{\mathrm{H}}
\newcommand{\tT}{\mathrm{T}}
\newtheorem{theorem}{Theorem}[section]
\newtheorem{lemma}[theorem]{Lemma}
\newtheorem{proposition}[theorem]{Proposition}
\newtheorem{remark}[theorem]{Remark}
\newtheorem{corollary}[theorem]{Corollary}
\theoremstyle{definition}
\newtheorem{example}[theorem]{Example}
\title{Fast Kernel Summation in High Dimensions via Slicing and Fourier Transforms}
\author{Johannes Hertrich\thanks{University College London,
\texttt{j.hertrich@ucl.ac.uk}.}
}
\begin{document}

\maketitle

\begin{abstract}
Kernel-based methods are heavily used in machine learning.
However, they suffer from $O(N^2)$ complexity in the number $N$ of considered data points.
In this paper, we propose an approximation procedure, which reduces this complexity to $O(N)$.
Our approach is based on two ideas.
First, we prove that any radial kernel with analytic basis function can be represented as sliced version of some one-dimensional kernel and derive an analytic formula for the one-dimensional counterpart.
It turns out that the relation between one- and $d$-dimensional kernels is given by a generalized Riemann-Liouville fractional integral.
Hence, we can reduce the $d$-dimensional kernel summation to a one-dimensional setting.
Second, for solving these one-dimensional problems efficiently, we apply fast Fourier summations on non-equispaced data, a sorting algorithm or a combination of both.
Due to its practical importance we pay special attention to the Gaussian kernel, where we show a dimension-independent error bound and represent its one-dimensional counterpart via a closed-form Fourier transform.
We provide a run time comparison and error estimate of our fast kernel summations.
\end{abstract}

\section{Introduction}

Kernel-based methods are a powerful tool in machine learning. 
Applications include kernel density estimation \cite{P1962,R1956}, classification via support vector machines \cite{SC2008}, dimensionality reduction with kernelized principal component analysis \cite{SS2002,SC2004}, distance measures on the space on probability measures like maximum mean discrepancies or the energy distance \cite{GBRSS2006,szekely2002}, corresponding gradient flows \cite{AKSG2019,GBG2024,HWAH2023,kolouri22}, and methods for Bayesian inference like Stein variational gradient descent \cite{LQ2016}.
All of these methods have in common that they need to evaluate the kernel sums
$
s_m=\sum_{n=1}^Nw_nK(x_n,y_m)
$
for $m=1,...,M$, $x_1,...,x_N,y_1,...,y_M\in\R^d$ and weights $w_1,...,w_N\in\R$.
Evaluating these sums in a straightforward way has a computational complexity $O(MN)$, which coincides with $O(N^2)$ for $M=N$. For large $M$ and $N$, this leads to an intractable computational effort. Moreover, some of these applications consider image datasets which are very high-dimensional.

In this paper, we focus on radial kernels given by $K(x,y)=F(\|x-y\|)$ for some basis function $F\colon\R_{\geq0}\to\R$.
We  improve the computational complexity towards $O(N+M)$ 
and accelerate the numerical computations
by two steps. First, we reduce the $d$ dimensional summation to several one-dimensional ones via slicing. Second, we compute the resulting one dimensional summations in an efficient way 
by non-equispaced Fourier transforms, sorting or a combination of both depending on the kernel.

In a low-dimensional setting, say $d \le 3$, there exist several approaches in the literature to accelerate the summations. We provide a (non exhaustive) list in the ``related work'' paragraph below. 
For smooth kernels, we mainly focus on fast Fourier summations \cite{KPS2006,PSN2004} which are based on non-equispaced Fourier transforms \cite{B1995,DR1993,PST2001}. 
That is, we expand the function $\phi\coloneqq F(\|\cdot\|)$ into its Fourier series and truncate it accordingly.
Then, the computation of the sums $s_m$ reduces to applying two Fourier transforms on non-equispaced points.
Assuming that the number of considered Fourier coefficients is constant, this leads to a computational complexity of $O(M+N)$. Depending on the required number of Fourier coefficients, we either use discrete (NDFT) or fast Fourier transforms (NFFT) \cite{B1995,DR1993,PST2001} for non-equispaced data.
We refer to Section~\ref{sec:fast_kernel_summation} for a more detailed explanation.
Additionally, we consider certain non-smooth kernels like the negative distance kernel and the Laplacian kernel, where we apply a sorting algorithm similar to \cite{HWAH2023} and a combination of sorting and fast Fourier summations.
Note that the computational effort of the Fourier transform scales exponentially with the dimension such that these approaches are only tractable for $d\leq 3$. 

Therefore, we reduce the $d$-dimensional problem in our first step to the one-dimensional case 
by projecting the points of interest onto lines in directions $\xi$ which are uniformly distributed on the 
unit sphere $\Sp^{d-1} \subset \R^d$.
That is, we assume that there exists a one-dimensional kernel 
$\mathrm{k}\colon\R\times\R\to\R$ 
such that the kernel 
$K\colon\R^d\times\R^d\to\R$ 
is of the form
\begin{equation}\label{eq:sliced_kernel_intro}
K(x,y)=\E_{\xi\sim \mathcal U_{\Sp^{d-1}}}[{\mathrm k}(P_\xi(x),P_\xi(y))],\quad P_\xi(x)=\langle \xi,x\rangle.
\end{equation}
Such kernels appeared in the context of neural gradient flows of sliced maximum mean discrepancies in \cite{HHABCS2023,HWAH2023,kolouri22}.
Interestingly, for radial kernels ${\mathrm k}(x,y) = f (\|x-y\|)$ with basis functions $f$ in a certain function space, 
the kernels $K$ defined by \eqref{eq:sliced_kernel_intro} are again  radial kernels
and the mapping from $f \mapsto F$ is a special case of the generalized Riemann-Liouville (or Weyl) fractional integral transform \cite{L1971,SKM1993}.
While this transform is injective, it maps only onto a smaller function space. 
Therefore not every radial kernel $K$  admits a one-dimensional radial counterpart 
such that \eqref{eq:sliced_kernel_intro} holds true.
We examine the relation for certain kernels appearing in applications.
In particular, we prove that for radial kernels with power-series-like basis function $F$, including Gaussian, Laplacian or Mat\'ern kernels,
there always exists a radial kernel $\mathrm{k}$ 
and provide its basis function analytically in a power-series-like form.
For the important case of the Gaussian kernel, it turns out that $\mathrm{k}$ can be expressed by the confluent hypergeometric function. 
In particular, we have access to a closed-form expression of the Fourier transform making the one-dimensional computations very efficient.
Now, the sums $s_m$ read as
\begin{equation*}
s_m=\E_{\xi\sim \mathcal U_{\Sp^{d-1}}}\Big[\sum_{n=1}^Nw_n\mathrm{k}(P_\xi(x_n),P_\xi(y_m))\Big].
\end{equation*}
By replacing the expectation over $\xi$ by finite sums, 
this reduces the computation of the $d$-dimensional kernel sums 
to the computation of a finite number of one-dimensional kernel sums.
Here, we give an explicit bound of the error which is introduced by discretizing the expectation value.
In particular, we observe that this bound is independent of the dimension 
for some important kernels like Gaussian, Laplacian and Mat\'ern kernels.

We demonstrate the efficiency of our fast kernel summations by numerical experiments including run time comparisons and error estimates for the Gaussian, Laplacian, negative distance (also called energy kernel) and Mat\'ern kernel. 
Moreover, we compare our slicing approach with random Fourier features (RFF) \cite{RR2007,SS2015}. In contrast to the proposed slicing method, RFF are limited to positive definite kernels. This excludes some applications where only conditionally positive definite kernels are used. 
For more details, we refer to Section~\ref{sec:cmp_RFF}.

\paragraph{Outline} In Section~\ref{sec:sliced_kernels}, we study radial kernels and their sliced versions. 
We prove that the relation between their basis functions $F$ and $f$ is given by a certain 
Riemann-Liouville  integral transform and we provide analytical expressions for $f$ 
if $F$ is an analytic function. Special emphasis is paid to the Gaussian kernel. 
Further, we give an error estimate when replacing the expectation value by an empirical one. Interestingly,
for various important kernels, this error does not depend on the dimension of the original summation problem.
In Section~\ref{sec:fast_kernel_summation}, we deal with the fast summation in the one-dimensional case.
For the  distance kernels, the 1d summation can be solved just by a sorting algorithm.
For analytic kernels like the Gaussian, we use our findings about sliced kernels 
to reduce the computation to the one-dimensional Fourier domain and apply NFFTs. 
For kernels as the Laplacian, we can combine sorting  and  NFFTs after a suitable kernel decomposition.
In Section~\ref{sec:numerics}, we verify our results numerically and compare to random Fourier features. 
Finally, conclusions are drawn in Section~\ref{sec:concl}.

\subsection*{Related Work}

\paragraph{Slicing} The idea to reduce high-dimensional problems to the one-dimensional case appeared first for the Wasserstein distance in \cite{RPDB2012} which turned out to be very useful in applications \cite{DLPYL2023,KNSBR2019}.
Kolouri et al.~\cite{kolouri22} adapted this idea for maximum mean discrepancies (MMD) and observed that the sliced MMD is again a MMD with a sliced kernel of the form \eqref{eq:sliced_kernel_intro}.
In \cite{HWAH2023}, the authors considered so-called Riesz kernels $K(x,y)=-\|x-y\|^r$, $r\in(0,2)$, which lead to the energy distance when they are inserted in the MMD, see e.g., \cite{SBGF2013}. They proved that sliced Riesz kernels are again Riesz kernels which offers the way to large scale applications, see \cite{HHABCS2023}. 
However, the theoretical analysis of Riesz kernels is challenging, see \cite{AHS2023,HGBS2024}.
Therefore the question arises if similar properties can be established for other kernels. In this paper, we want to address this question.
Some statements from this paper were generalized in the recent preprint \cite{RQS2024}.

\paragraph{Fast Kernel Summations}

The improvement of the quadratic complexity within kernel methods has a rich history in literature.
In a low dimensional setting, fast kernel summations are based on (non-)equispaced (fast) Fourier transforms \cite{GRB2022,KPS2006,PSN2004}, fast multipole methods \cite{BN1992,GR1987,LG2008,YR1999}, tree-based methods \cite{MXB2015,MXCB2015} or others \cite{H1999,MDHY2017, LP2022}.
For the Gaussian kernel, the fast Gauss transform was proposed in \cite{GS1991} and improved in \cite{YDGD2003,YDD2004}.
More general fast kernel transforms were proposed in \cite{RAGD2022}.
However, all of these approaches are limited to the low dimensional setting
and a generalization to the high-dimensional case does not appear to be straight-forward.
For higher dimensions the authors of \cite{MB2017} propose a decomposition and low-rank approach.
In the case of positive definite kernels, random Fourier features \cite{RR2007} are an alternative way for fast kernel summation in high dimensions. We include an explanation and a numerical comparison between the proposed slicing method and random Fourier features in Section~\ref{sec:cmp_RFF}.
Finally, the brute-force computation of the kernel sum can massively speeded up by the careful handling of several implementation aspects and GPU usage. For instance the KeOps package \cite{CFGCD2021} achieves state-of-the-art performance for a small to moderate number of data points. We include a run time comparison in Section~\ref{sec:cmp_keops}.

\section{Sliced Kernels}\label{sec:sliced_kernels}

A kernel $K\colon\R^d\times\R^d\to\R$ is called symmetric if $K(x,y)=K(y,x)$ for all $x,y\in\R^d$ and positive definite if for all $n\in\N$, $x_i\in\R^d$  and $a_i\in\R$ for $i=1,...,n$ it holds that
\begin{equation*}
\sum_{i=1}^n\sum_{j=1}^n a_i a_j K(x_i,x_j)\geq0
\end{equation*}
with equality if and only if $a_i=0$ for all $i=1,...,n$.
For $\xi\in\Sp^{d-1}$, we denote by $P_\xi\colon\R^d\to\R$ with $P_\xi(x)=\langle \xi,x\rangle$ the projection onto the one-dimensional subspace spanned by the direction $\xi$. 
In order to reduce the dimension, we consider sliced kernels of the form 
\begin{equation}\label{eq:sliced_kernel}
K(x,y)=\E_{\xi\sim \mathcal U_{\Sp^{d-1}}}[{\mathrm k}(P_\xi(x),P_\xi(y))],
\end{equation}
where ${\mathrm k}\colon\R\times\R\to\R$ is a one-dimensional kernel and $\mathcal U_{\Sp^{d-1}}$ is the uniform distribution on $\Sp^{d-1}$.
In this section, we examine the relation between sliced kernels and general kernels. 
First, we show that $K$ is positive definite whenever $\mathrm{k}$ is positive definite.
For radial kernels, the reverse statement is true as well, see \cite[Cor 4.11]{RQS2024}.
 
\begin{lemma}\label{slicedpositive}
Let ${\mathrm k}\colon\R\times\R\to\R$ be a positive definite kernel. Then, $K\colon\R^d\times\R^d\to\R$ defined by \eqref{eq:sliced_kernel} is positive definite.
\end{lemma}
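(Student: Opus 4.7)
The plan is to push the quadratic sum inside the expectation and exploit positive definiteness of $\mathrm{k}$ pointwise in $\xi$. Concretely, fix $n\in\N$, distinct points $x_1,\dots,x_n\in\R^d$, and scalars $a_1,\dots,a_n\in\R$. Linearity of expectation together with the definition \eqref{eq:sliced_kernel} of $K$ rewrites the quadratic form of $K$ as
\[
\sum_{i,j=1}^n a_i a_j\,K(x_i,x_j)=\E_{\xi\sim\mathcal U_{\Sp^{d-1}}}\Bigl[\sum_{i,j=1}^n a_i a_j\,\mathrm{k}(P_\xi(x_i),P_\xi(x_j))\Bigr].
\]

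For the nonnegativity half, the bracketed inner sum, for each fixed $\xi$, is exactly the quadratic form of $\mathrm{k}$ at the real numbers $P_\xi(x_1),\dots,P_\xi(x_n)$ with coefficients $a_i$, hence $\geq 0$ by positive definiteness of $\mathrm{k}$. Integrating a nonnegative function against the probability measure $\mathcal U_{\Sp^{d-1}}$ preserves the inequality, giving the $\geq 0$ part immediately.

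For the strict equality case, suppose the left-hand side vanishes. Since the integrand is nonnegative it must then be zero for almost every $\xi\in\Sp^{d-1}$. The key geometric input I would use (valid for $d\geq 2$) is that for each pair with $x_i\neq x_j$, the collision set $\{\xi\in\Sp^{d-1}:\langle\xi,x_i-x_j\rangle=0\}$ is a great $(d-2)$-sphere, hence has surface measure zero; taking the finite union over all such pairs still yields a null set. Picking any $\xi$ outside this null set and outside the measure-zero set on which the integrand fails to vanish, the projections $P_\xi(x_i)$ are pairwise distinct, and strict positive definiteness of $\mathrm{k}$ forces $a_1=\dots=a_n=0$.

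The main subtlety I anticipate is exactly this equality case, and in particular the degenerate dimension $d=1$: there $\Sp^0=\{\pm 1\}$ carries counting measure and the measure-zero argument collapses. In that case one reads off directly that $K(x,y)=\tfrac12(\mathrm{k}(x,y)+\mathrm{k}(-x,-y))$ is the sum of two nonnegative quadratic forms in $\mathrm{k}$, at least one of which is at pairwise distinct points (since $P_{\pm 1}$ preserves distinctness), and concludes as before. The weak half of the claim, which is what most downstream uses actually need, is a one-line application of linearity of expectation.
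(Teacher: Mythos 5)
Your proof is correct and follows essentially the same route as the paper: exchange the quadratic form with the expectation, use pointwise nonnegativity of the integrand for the inequality, and almost-everywhere vanishing of the integrand for the equality case. If anything, your handling of the equality case is more careful than the paper's one-line conclusion, since you justify---via the null collision sets $\{\xi : \langle \xi, x_i - x_j\rangle = 0\}$ (and the separate $d=1$ case)---that strict positive definiteness of $\mathrm{k}$ may be invoked at some $\xi$ where the projected points remain pairwise distinct, a detail the paper glosses over.
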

\begin{proof}
Let $x_i \in \R^d$ and $a_i\in\R$ for all $i \in \{1,...,n\}$ with $n \in \mathbb N$. 
Since ${\mathrm k}$ is positive definite, we obtain that
\begin{align*}
\sum_{i,j=1}^n a_i a_j K(x_i,x_j) = \int_{\Sp^{d-1}} \underbrace{\sum_{i,j=1}^n a_i a_j {\mathrm k}(P_{\xi}(x_i),P_{\xi}(x_j))}_{\geq0} \dx \mathcal U_{\Sp^{d-1}} (\xi)\ge 0.
\end{align*}
Moreover, equality yields that the integrand is zero $\mathcal U_{\Sp^{d-1}}$-almost everywhere which implies by the positive definiteness of $\mathrm{k}$ that $a_i=0$ for all $i=1,...,n$.
\end{proof}

\subsection{Sliced Radial Kernels}

An important class of kernels are radial kernels, which have the form
$
K(x,y)=F(\|x-y\|)
$
for some basis function $F\colon\R_{\geq0}\to\R$. 
By definition radial kernels are symmetric.
Next, we represent the basis functions of sliced radial kernels $K$ in terms of the basis function $f$ of the one-dimensional counterpart $\mathrm{k}\colon\R\times\R\to\R$ with $\mathrm{k}( x,y) = f(|x-y|)$.
To this end, we use the notation $L^\infty_\mathrm{loc}\coloneqq\{f\colon\R_{\geq0}\to\R:f|_{[0,s]}\in L^\infty([0,s])\text{ for all }s>0\}$. Then, by the following proposition, the basis function of $K$ can be expressed by the \emph{generalized Riemann-Liouville fractional integral} transform defined by
\begin{equation}\label{eq:basis_function}
\mathcal S_d\colon L^\infty_\mathrm{loc}\to L^\infty_\mathrm{loc},\qquad \mathcal S_d(f)=F\quad\text{ with }\quad F(s)=\frac{2\Gamma(\frac{d}{2})}{\sqrt{\pi}\Gamma(\frac{d-1}{2})}\int_0^1f(ts)(1-t^2)^{\frac{d-3}{2}}\d t
\end{equation}
which is a special case of Erdelyi-Kober integrals, see \cite{L1971,SKM1993}. 
It was used in a closely related setting for computing the Radon transform of radial functions, see \cite[Prop 2.2]{R2003}, and is known to be injective, see \cite[(8)]{L1971}.
The proof of the proposition is given in Appendix~\ref{proof:sliced_kernels}.

\begin{proposition}\label{prop:sliced_kernels}
Let $\mathrm{k}\colon\R\times\R\to\R$ be a one-dimensional radial kernel with basis function $f\in L^\infty_\mathrm{loc}$.
Then, for $d\geq 2$ the following holds true.
\begin{enumerate}
\item[(i)] The kernel $K\colon\R^d\times\R^d\to\R$ from \eqref{eq:sliced_kernel} is given by $K(x,y)=F(\|x-y\|)$, where $F=\mathcal S_d(f)$.
\item[(ii)] For $d\geq 4$ it holds that $\mathcal S_d(f)$ is differentiable on $(0,\infty)$ for all $f\in L^\infty_\mathrm{loc}$. In particular, $\mathcal S_d$ is not surjective.
\end{enumerate}
\end{proposition}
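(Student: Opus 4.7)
For part (i), the plan is to exploit the rotational invariance of $\mathcal U_{\Sp^{d-1}}$. Setting $z=x-y$ and choosing coordinates so that $z=\|z\|\,e_1$, the projection reduces to $P_\xi(x)-P_\xi(y)=\|z\|\,\xi_1$, so that
\[
K(x,y)=\E_{\xi\sim\mathcal U_{\Sp^{d-1}}}\bigl[f(\|z\|\,|\xi_1|)\bigr]
\]
depends only on $\|z\|$. I would then invoke the standard formula for the marginal density of a single coordinate of a uniform point on $\Sp^{d-1}$, namely $t\mapsto \frac{\Gamma(d/2)}{\sqrt{\pi}\,\Gamma((d-1)/2)}(1-t^2)^{(d-3)/2}$ on $[-1,1]$, obtainable by integrating surface measure over the remaining $d-1$ coordinates, and fold the integral to $[0,1]$ by parity in $t$. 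This recovers exactly $F(s)=\mathcal S_d(f)(s)$.

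For part (ii), the change of variables $u=ts$ recasts \eqref{eq:basis_function} as
\[
s^{d-2} F(s) = c_d\,\psi(s),\qquad \psi(s):=\int_0^s f(u)\,(s^2-u^2)^{\alpha}\,\dx u,\quad \alpha:=\tfrac{d-3}{2}\geq\tfrac12,
\]
for $d\ge 4$, where $c_d$ denotes the constant in \eqref{eq:basis_function}. Since $s^{d-2}$ is smooth and nonzero on $(0,\infty)$, it suffices to prove $\psi$ is differentiable there. I would decompose the difference quotient $[\psi(s+h)-\psi(s)]/h$ into a short-interval integral over $[s,s+h]$, which is easily bounded by $O(h^{\alpha})$ and vanishes as $h\to 0$ because $\alpha>0$, and a main integral over $[0,s]$. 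For the latter, the mean value theorem applied to $h\mapsto ((s+h)^2-u^2)^{\alpha}$ produces the pointwise limit $2\alpha s(s^2-u^2)^{\alpha-1}f(u)$ together with a dominating function of the form $C(s-u)^{\alpha-1}$ which is integrable on $(0,s)$ precisely because $\alpha>0$. Dominated convergence then yields $\psi'(s)=2\alpha s\!\int_0^s f(u)(s^2-u^2)^{\alpha-1}\dx u$.

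The main technical obstacle lies in the borderline case $d=4$, where $\alpha=\tfrac12<1$: the limiting integrand $(s^2-u^2)^{-1/2}f(u)$ is itself singular at $u=s$, so the $(s-u)^{\alpha-1}$ domination is tight and must be verified carefully (checking that $(s+\eta-u)^{\alpha-1}(s+\eta+u)^{\alpha-1}$ for $\eta\in(0,h)$ is really bounded above by a fixed $L^1$ function); for $d\ge 5$ the integrand is continuous on the closed interval and classical Leibniz's rule applies without subtlety. Once differentiability on $(0,\infty)$ is established, non-surjectivity follows immediately by exhibiting any $G\in L^\infty_\mathrm{loc}$ failing to be differentiable at a single point of $(0,\infty)$, for instance $G(s)=|s-1|$, which therefore cannot lie in the image of $\mathcal S_d$.
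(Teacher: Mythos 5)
Your proposal is correct and follows essentially the same route as the paper: part (i) via rotational invariance, reducing $K$ to an expectation of $f(\|x-y\|\,|\xi_1|)$ and using the marginal density of a single sphere coordinate (which the paper derives from the surface-measure decomposition rather than citing as standard), and part (ii) by moving $s$ into the integration limits and differentiating under the integral sign, with the integrable singularity in the borderline case $d=4$ as the only delicate point. If anything, your explicit domination check for $\alpha=\tfrac12$ and the concrete non-surjectivity witness $G(s)=|s-1|$ are more detailed than the paper's terse appeal to the Leibniz integral rule.
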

Note that the assumption $f\in L^\infty_\mathrm{loc}$ can be lowered to $f|_{[0,C]}\in L^1([0,C])$ for $d\geq 3$ since then $(1-t^2)^{\frac{d-3}{2}}$ is in $L^\infty([0,1])$.
Part (ii) of the proposition implies that there exist radial kernels $K$ which cannot be written in the form \eqref{eq:sliced_kernel}.
Therefore, we additionally assume that the corresponding basis function $F$ admits a globally convergent Taylor expansion around zero.
Then, we construct in the next theorem a one-dimensional kernel $\mathrm{k}$ such that \eqref{eq:sliced_kernel} holds true. 
The proof is given in Appendix~\ref{proof:kernels_to_sliced_analytic}.

\begin{theorem}\label{thm:kernels_to_sliced_analytic}
Let $K\colon\R^d\times\R^d\to\R$ be a radial kernel with basis function $F\colon\R_{\geq0}\to\R$. 
Further assume that $F$ is an analytic function with globally convergent Taylor series in $0$ given by
\begin{equation*}
F(x)=\sum_{n=0}^\infty a_n x^n.
\end{equation*}
Then, $K$ is given by \eqref{eq:sliced_kernel} for the radial kernel $k\colon\R\times\R\to\R$ with basis function $f\colon\R_{\geq0}\to\R$ given by
\begin{equation*}
f(x)=\sum_{n=0}^\infty b_n x^n,\quad\text{with}\quad b_n=\frac{\sqrt{\pi}\Gamma(\frac{n+d}{2})}{\Gamma(\frac{d}{2})\Gamma(\frac{n+1}{2})} a_n.
\end{equation*}
\end{theorem}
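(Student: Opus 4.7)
The plan is to start from Proposition~\ref{prop:sliced_kernels}(i), which tells us that if a one-dimensional radial kernel $\mathrm{k}$ with basis function $f \in L^\infty_\mathrm{loc}$ produces $K$ via slicing, then $F = \mathcal{S}_d(f)$. So the strategy is constructive: define $f$ by the proposed power series, verify it is a well-defined function in $L^\infty_\mathrm{loc}$, then check that applying $\mathcal{S}_d$ to it reproduces $F$ term by term.

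First I would show that the series $f(x) = \sum_{n=0}^\infty b_n x^n$ converges on all of $\R_{\ge 0}$. Using the standard asymptotics $\Gamma(z+\alpha)/\Gamma(z+\beta) \sim z^{\alpha-\beta}$, the ratio $b_n/a_n = \tfrac{\sqrt{\pi}\,\Gamma((n+d)/2)}{\Gamma(d/2)\,\Gamma((n+1)/2)}$ grows only polynomially in $n$ (at rate $n^{(d-1)/2}$). Hence $\limsup|b_n|^{1/n} = \limsup|a_n|^{1/n} = 0$, so $f$ is entire. In particular $f$ is continuous on $\R_{\ge0}$ and therefore lies in $L^\infty_\mathrm{loc}$, so Proposition~\ref{prop:sliced_kernels}(i) applies.

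Next I would compute $\mathcal S_d(f)$ term by term. Inserting the series into the integral definition~\eqref{eq:basis_function} and swapping sum and integral (justified by local uniform convergence of the power series on $[0,s]$ for each fixed $s$, which gives dominated convergence against the integrable weight $(1-t^2)^{(d-3)/2}$), one reduces to evaluating
\begin{equation*}
\int_0^1 t^n (1-t^2)^{\frac{d-3}{2}}\,\d t
= \tfrac{1}{2}\int_0^1 u^{\frac{n-1}{2}}(1-u)^{\frac{d-3}{2}}\,\d u
= \tfrac{1}{2}\,\Beta\!\left(\tfrac{n+1}{2},\tfrac{d-1}{2}\right)
= \tfrac{\Gamma(\frac{n+1}{2})\Gamma(\frac{d-1}{2})}{2\,\Gamma(\frac{n+d}{2})}
\end{equation*}
after the substitution $u = t^2$. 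Multiplying by the prefactor $\tfrac{2\Gamma(d/2)}{\sqrt{\pi}\Gamma((d-1)/2)}$ from $\mathcal{S}_d$ gives a telescoping of Gamma factors, so that
\begin{equation*}
\mathcal S_d(f)(s) \;=\; \sum_{n=0}^\infty b_n \cdot \tfrac{\Gamma(d/2)\,\Gamma(\frac{n+1}{2})}{\sqrt{\pi}\,\Gamma(\frac{n+d}{2})}\, s^n \;=\; \sum_{n=0}^\infty a_n s^n \;=\; F(s),
\end{equation*}
where the middle equality uses the definition of $b_n$. Combined with Proposition~\ref{prop:sliced_kernels}(i), this yields the claim.

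The only subtle step is the sum–integral interchange: since the Taylor series of $f$ converges uniformly on compact subsets of $\R_{\ge 0}$ and the weight $(1-t^2)^{(d-3)/2}$ is integrable on $[0,1]$ for $d\ge 2$ (and bounded for $d\ge 3$), the interchange is routine by dominated convergence. Everything else is bookkeeping with Gamma functions, so no real obstacle arises beyond the asymptotic argument ensuring $f$ is entire in the first place.
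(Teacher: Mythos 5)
Your proposal is correct and follows essentially the same route as the paper: invoke Proposition~\ref{prop:sliced_kernels}(i), verify that $f$ is entire because the Gamma-ratio $b_n/a_n$ grows only polynomially (the paper's Lemma~\ref{lem:power_series}), and integrate the series term by term against $(1-t^2)^{(d-3)/2}$ via the Beta-function identity (the paper's Lemma~\ref{lem:haessliche_integrale}), after which the Gamma factors cancel to recover $F$. Your explicit justification of the sum--integral interchange is a small extra rigor the paper leaves implicit, but the substance is identical.
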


\begin{figure}
\begin{table}[H]
\scalebox{.68}{
\begin{tabular}{c|c|c|c|c}
Kernel&$F(x)$&Power series of $F(x)$&$f(x)$&Power series of $f(x)$\\\hline
Gaussian &$\exp(-\frac{x^2}{2\sigma^2})$&$\sum_{n=0}^\infty \frac{(-1)^n}{2^n\sigma^{2n}n!} x^{2n}$&${_1}F_1(\tfrac{d}{2};\tfrac12;\tfrac{-x^2}{2\sigma^2})$&$\sum_{n=0}^\infty \frac{(-1)^n\sqrt{\pi}\Gamma(\frac{2n+d}{2})}{2^n\sigma^{2n} n!\Gamma(\frac{d}{2})\Gamma(\frac{2n+1}{2})}x^{2n}$\\[2ex]
Sliced Gaussian&${_1}F_1(\tfrac12;\tfrac{d}{2};\tfrac{-x^2}{2\sigma^2})$&$\sum_{n=0}^\infty \frac{(-1)^n\Gamma(\frac{d}{2})\Gamma(\frac{2n+1}{2})}{2^n\sigma^{2n} n!\sqrt{\pi}\Gamma(\frac{2n+d}{2})}x^{2n}$&$\exp(-\frac{x^2}{2\sigma^2})$&$\sum_{n=0}^\infty \frac{(-1)^n}{2^n\sigma^{2n}n!} x^{2n}$\\[2ex]
Laplacian &$\exp(-\alpha x)$&$\sum_{n=0}^\infty \frac{(-1)^n\alpha^n}{n!} x^{n}$&Appendix~\ref{app:laplace_hypergeom}&$\sum_{n=0}^\infty \frac{(-1)^n\alpha^n\sqrt{\pi}\Gamma(\frac{n+d}{2})}{n!\Gamma(\frac{d}{2})\Gamma(\frac{n+1}{2})} x^{n}$\\[2ex]
Sliced Laplacian&no closed form&$\sum_{n=0}^\infty \frac{(-1)^n\alpha^n\Gamma(\frac{d}{2})\Gamma(\frac{n+1}{2})}{n!\sqrt{\pi}\Gamma(\frac{n+d}{2})} x^{n}$&$\exp(-\alpha x)$&$\sum_{n=0}^\infty \frac{(-1)^n\alpha^n}{n!} x^{n}$\\[2ex]
Mat\'ern (Appendix~\ref{app:laplace_hypergeom})&$\frac{2^{1-\nu}}{\Gamma(\nu)}(\tfrac{\sqrt{2\nu}}{\beta}x)^\nu K_\nu(\tfrac{\sqrt{2\nu}}{\beta}x)$&only for $\nu\in\frac12\Z$&Appendix~\ref{app:laplace_hypergeom}&only for $\nu\in\frac12\Z$\\[2ex]
Negative Distance&$-x$&$-x$&$-\frac{\sqrt{\pi}\Gamma(\frac{d+1}{2})}{\Gamma(\frac{d}{2})} x$&$-\frac{\sqrt{\pi}\Gamma(\frac{d+1}{2})}{\Gamma(\frac{d}{2})} x$\\[2ex]
Riesz for $r\in(0,2)$ \cite{HWAH2023}&$-x^r$&does not exist&$-\frac{\sqrt{\pi}\Gamma(\frac{d+r}{2})}{\Gamma(\frac{d}{2})\Gamma(\frac{r+1}{2})}x^r$&does not exist\\
Thin Plate Spline&$x^2\log(x)$&does not exist&\begin{tabular}{@{}c@{}}$d x^2\log(x)-C x^2$\\ with $C$ from \eqref{eq:thin_pline_C}\end{tabular}&does not exist
\end{tabular}}
\caption{
Basis functions $F$ for different kernels $K(x,y)=F(\|x-y\|)$ and corresponding basis functions $f$ from  $\mathrm{k}(x,y)=f(|x-y|)$. 
}
\label{tab:kernels}
\end{table}

\begin{figure}[H]
\centering
\begin{subfigure}{.3\textwidth}
\includegraphics[width=\textwidth]{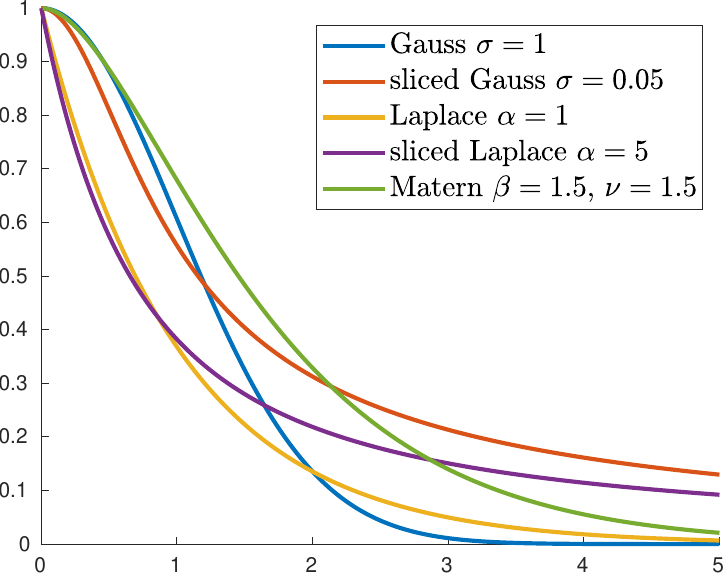}
\caption*{Basis functions $F$ for $d=10$}
\end{subfigure}
\qquad
\begin{subfigure}{.3\textwidth}
\includegraphics[width=\textwidth]{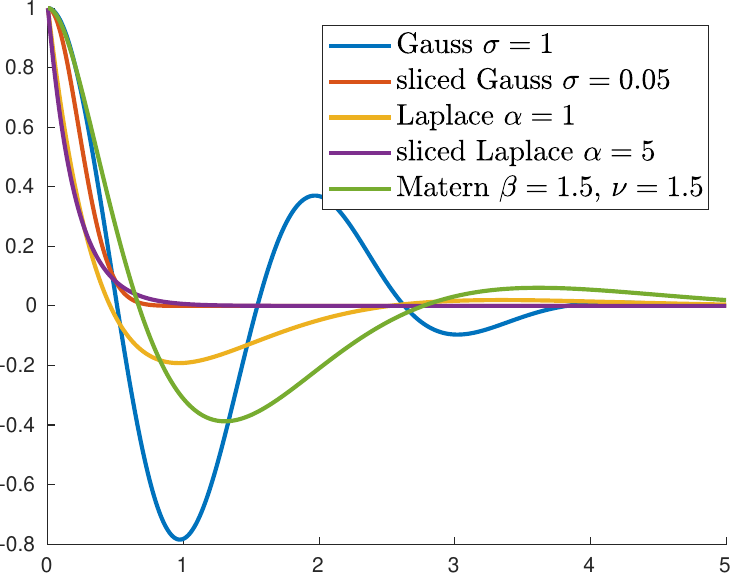}
\caption*{Basis functions $f$ for $d=10$}
\end{subfigure}

\begin{subfigure}{.3\textwidth}
\includegraphics[width=\textwidth]{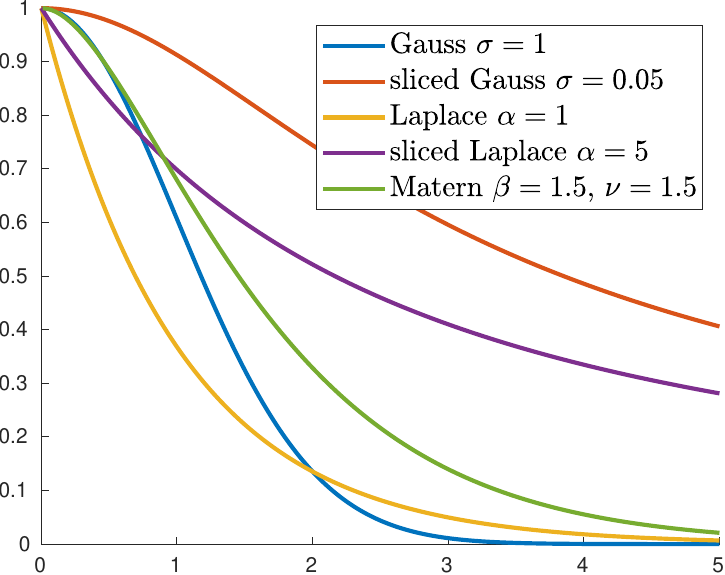}
\caption*{Basis functions $F$ for $d=100$}
\end{subfigure}
\qquad
\begin{subfigure}{.3\textwidth}
\includegraphics[width=\textwidth]{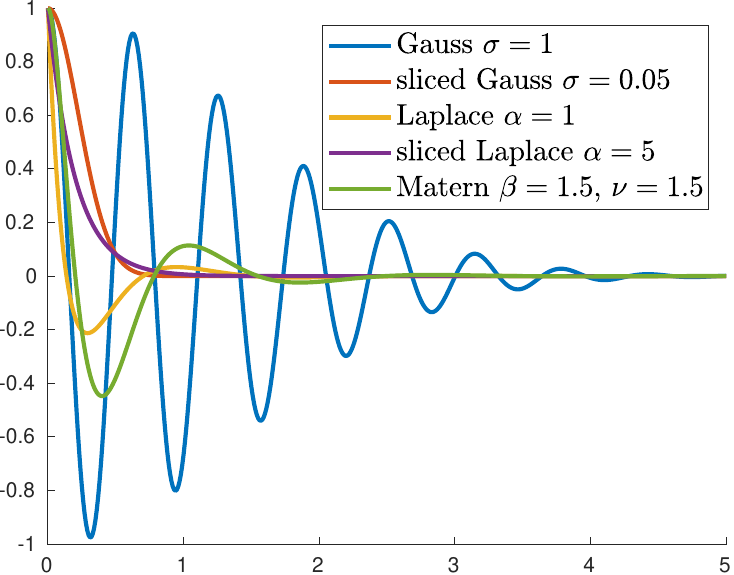}
\caption*{Basis functions $f$ for $d=100$}
\end{subfigure}
\caption{We plot the basis functions $F$ of the kernels $K(x,y)=F(\|x-y\|)$ from Table~\ref{tab:kernels} and the basis functions $f$ from the corresponding one-dimensional kernels $\mathrm{k}(x,y)=f(|x-y|)$.}
\label{fig:basis_functions}
\end{figure}
\end{figure}
\begin{example}
We apply the theorem to several kernels $K(x,y)=F(\|x-y\|)$ and compute the one-dimensional counterparts $\mathrm{k}(x,y)=f(|x-y|)$ in Table~\ref{tab:kernels}. In particular, we obtain the following.
\begin{itemize}
\item[-] For the Gaussian kernel, the function $f$ is given by the confluent hypergeometric function ${_1}F_1(a;b;x)$. This case was already considered in \cite{KSTP2020}.
\item[-] For Laplacian kernel and Mat\'ern kernel, $f$ is given by the generalized hypergeometric function ${_1}F_2(a;b,c;x)$. The Mat\'ern kernel is defined via the modified Bessel function of second kind $K_\nu$. Its series expansion is only for $\nu\in\frac12\Z$ really a power series. Nevertheless we can apply an analogous version of Theorem~\ref{thm:kernels_to_sliced_analytic} for general $\nu$. We include the details and the exact formula in Appendix~\ref{app:laplace_hypergeom}. Note that the Laplacian kernel coincides with the Mat\'ern kernel for $\nu=\frac12$ and $\beta=\frac{1}{\alpha}$.
\item[-] We could also set the one-dimensional kernel $\mathrm{k}$ to a Gaussian or Laplacian kernel and compute the corresponding high-dimensional kernel $K$. We include these examples under the name ``sliced Gaussian'' and ``sliced Laplacian'' kernel. For the Gaussian kernel, $F$ is again a confluent hypergeometric function.
\item[-] We include the derivation for the thin plate spline kernel and the corresponding constant $C$ in Appendix~\ref{app:laplace_hypergeom}.
\end{itemize}
We plot the arising basis functions $F$ and $f$ in Figure~\ref{fig:basis_functions}. It is remarkable that for the Gaussian, Laplacian and Mat\'ern kernel, the basis function $f$ is not monotone.
\end{example}

For radial kernels with non-analytic basis functions, we can still give an existence result of an one-dimensional counterpart for $d=3$.

\begin{corollary}\label{cor:existence_d3}
Let $K\colon\R^d\times\R^d\to\R$ be a radial kernel with differentiable basis function $F\colon\R_{\geq0}\to\R$. 
If $d=3$, then $\mathrm{k}(x,y)=f(|x-y|)$ with $f(t)=F(t)+tF'(t)$ fulfills \eqref{eq:sliced_kernel}.
\end{corollary}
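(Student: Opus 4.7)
The plan is to directly verify the formula $f(t) = F(t) + tF'(t)$ by plugging into the Riemann--Liouville transform $\mathcal S_d$ from Proposition~\ref{prop:sliced_kernels}(i), specialized to $d=3$. By that proposition, it suffices to show that $\mathcal S_3(f) = F$ for this choice of $f$, since then $K(x,y) = F(\|x-y\|) = \mathcal S_3(f)(\|x-y\|)$ is exactly the sliced kernel associated with $\mathrm{k}(x,y)=f(|x-y|)$.

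First I would simplify $\mathcal S_3$. Using $\Gamma(3/2)=\sqrt\pi/2$ and $\Gamma(1)=1$, the prefactor in \eqref{eq:basis_function} equals $1$, and the weight $(1-t^2)^{(d-3)/2}$ becomes $(1-t^2)^0 = 1$. Hence
\begin{equation*}
\mathcal S_3(f)(s) \;=\; \int_0^1 f(ts)\,\d t.
\end{equation*}
This specialization is the whole reason the $d=3$ case is elementary: the kernel of the integral transform collapses to a pure averaging operator.

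Next I would observe the key algebraic identity
\begin{equation*}
F(t) + tF'(t) \;=\; \frac{\d}{\d t}\bigl[t\,F(t)\bigr],
\end{equation*}
so that $f(t)$ is exactly the derivative of $t F(t)$. Substituting this into the integral for $\mathcal S_3(f)(s)$ and using the change of variables $u = ts$ (so $\d u = s\,\d t$), one obtains
\begin{equation*}
\mathcal S_3(f)(s) \;=\; \int_0^1 \bigl[F(ts) + ts\,F'(ts)\bigr]\d t
\;=\; \frac{1}{s}\int_0^s \frac{\d}{\d u}\bigl[u\,F(u)\bigr]\d u
\;=\; \frac{1}{s}\bigl[s\,F(s) - 0\bigr]
\;=\; F(s),
\end{equation*}
for $s > 0$; the case $s=0$ follows by continuity (or by reading off $\mathcal S_3(f)(0) = f(0) = F(0)$). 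Combined with Proposition~\ref{prop:sliced_kernels}(i), this yields $K(x,y) = \E_{\xi\sim\mathcal U_{\Sp^{d-1}}}[\mathrm k(P_\xi(x),P_\xi(y))]$, which is exactly \eqref{eq:sliced_kernel}.

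There is essentially no obstacle here: the only mild point to be careful about is the behaviour at $s=0$, where the division by $s$ looks singular, but the numerator vanishes linearly so the limit agrees with $F(0)$; equivalently one can evaluate $\mathcal S_3(f)(0) = \int_0^1 f(0)\d t = f(0) = F(0)$ directly from \eqref{eq:basis_function}. The real content of the corollary is simply the observation that in dimension three the Riemann--Liouville integral $\mathcal S_3$ has the elementary right inverse $F \mapsto F + (\cdot) F'$, which comes from $\mathcal S_3$ being an averaging operator and hence being inverted by $\frac{\d}{\d s}(s \cdot)$.
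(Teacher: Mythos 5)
Your proposal is correct and matches the paper's own proof in essence: both reduce the claim to verifying $\mathcal S_3(f)=F$, exploit that the $d=3$ transform collapses to the averaging operator $\int_0^1 f(ts)\,\d t$, and use the identity $f(t)=\frac{\d}{\d t}[tF(t)]$ together with the substitution $u=ts$ (the paper merely runs the computation in the opposite direction, integrating the definition of $f$ first and then rescaling). Your extra remarks on the $s=0$ case and the explicit appeal to Proposition~\ref{prop:sliced_kernels}(i) are fine but do not change the argument.
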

\begin{proof}
We have to prove that \eqref{eq:basis_function} holds true for $F$ and $f$.
Integrating the definition
$
f(t)=F(t)+tF'(t)
$
from $0$ to $s$ gives
$
\int_0^sf(t)\d t=sF(s).
$
Dividing by $s$ and substituting $t$ by $t/s$ in the integral, we obtain
$
F(s)=\frac{1}{s}\int_0^sf(t)\d t=\int_0^1f(st)\d t
$.
For $d=3$, this results in \eqref{eq:basis_function}.
\end{proof}

It might be possible to generalize this approach: For odd $d$, we can reformulate \eqref{eq:basis_function} by integration by parts as an ODE. Unfortunately the ODE is nonlinear and the coefficient functions admit several singularities. Therefore the existence of solutions is highly unclear and a closer examination of these ODEs is not within the scope of this paper.

\subsection{The Gaussian Kernel}\label{sec:Gauss}

\begin{figure}
\centering
\begin{subfigure}{.24\textwidth}
\includegraphics[width=\textwidth]{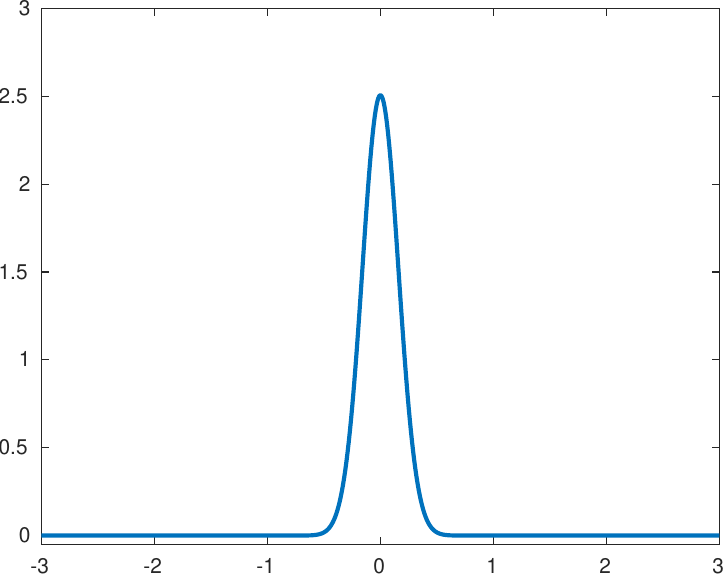}
\caption*{$d=1$}
\end{subfigure}
\begin{subfigure}{.24\textwidth}
\includegraphics[width=\textwidth]{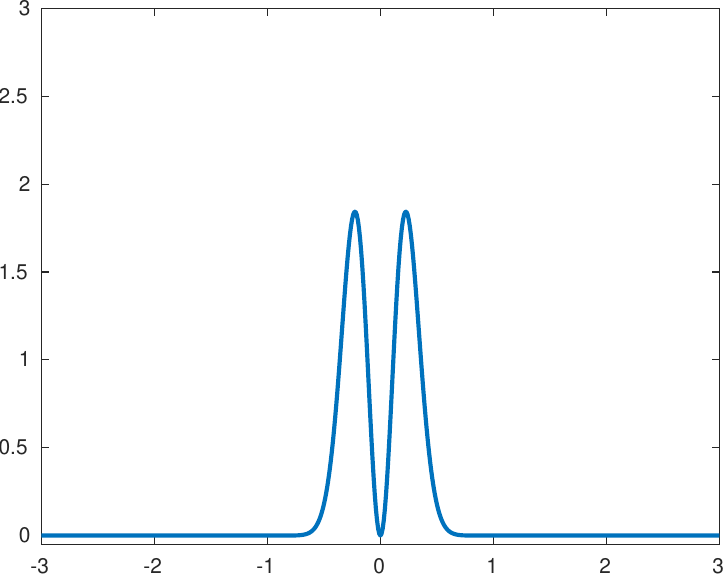}
\caption*{$d=3$}
\end{subfigure}
\begin{subfigure}{.24\textwidth}
\includegraphics[width=\textwidth]{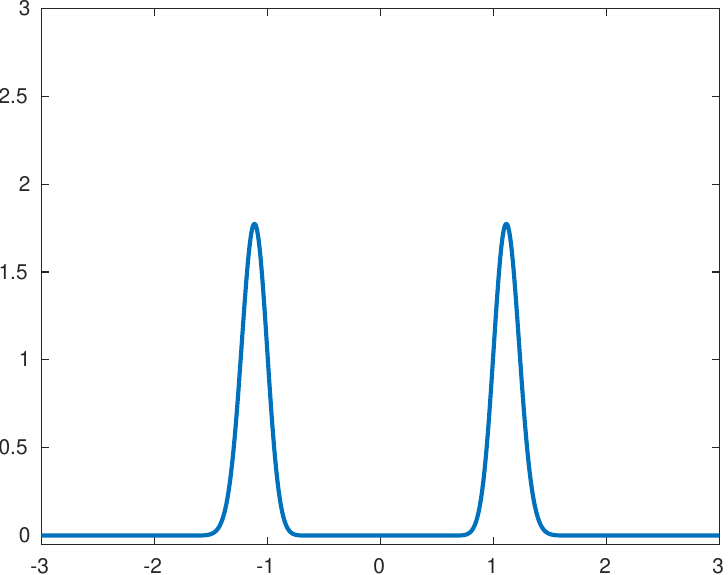}
\caption*{$d=50$}
\end{subfigure}
\begin{subfigure}{.24\textwidth}
\includegraphics[width=\textwidth]{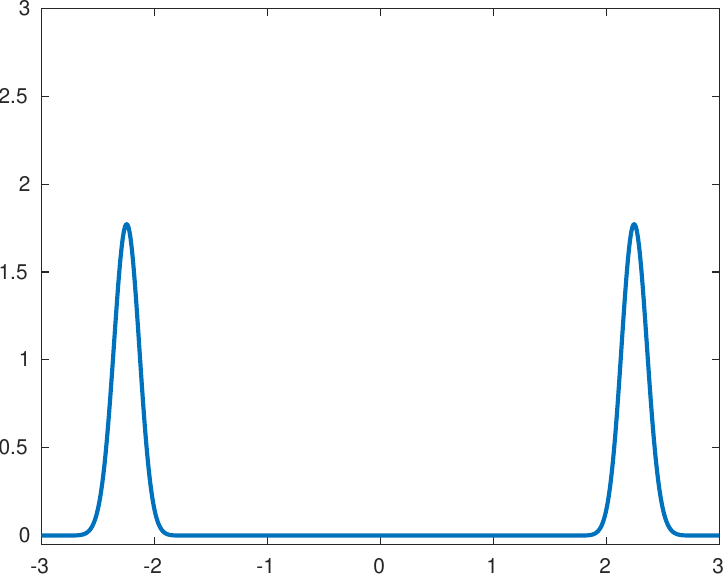}
\caption*{$d=200$}
\end{subfigure}
\caption{Plot of the Fourier transform $\hat f_1$ of $f_1(x)={_1}F_1(\tfrac{d}2,\tfrac12,\tfrac{-x^2}{2})$ for different dimensions $d$.}
\label{fig:plot_fourier_transform}
\end{figure}

We have seen in Table~\ref{tab:kernels} and Figure~\ref{fig:basis_functions} that the one-dimensional kernel $\mathrm{k}(x,y)=f_\sigma(|x-y|)$ corresponding to the $d$-dimensional Gaussian kernel is given by the confluent hypergeometric function $f_\sigma(x)={_1}F_1(\tfrac{d}{2};\tfrac12;\tfrac{-x^2}{2\sigma^2})$. Unfortunately, this is an highly oscillating function and therefore computational costly and numerical instable in its evaluation.
As a remedy, we compute its Fourier transform by the following lemma. The proof is a short calculation based on \cite[(13.10.12)]{OLBC2010}.

\begin{lemma}\label{lem:Fourier_Gauss}
For $\sigma>0$, the Fourier transform of $f_\sigma(x)={_1}F_1(\tfrac{d}{2};\tfrac12;\tfrac{-x^2}{2\sigma^2})$ is given by
\begin{equation*}
\hat f_\sigma(\omega)=\int_\R f_\sigma(x)\exp(-2\pi i x\omega)\d x=\frac{d\pi\sigma\exp(-2\pi^2\sigma^2\omega^2)(2\pi^2\sigma^2\omega^2)^{(d-1)/2}}{\sqrt{2}\Gamma(\frac{d+2}{2})}.
\end{equation*}
\end{lemma}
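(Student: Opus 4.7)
The plan is to recognize $f_\sigma$ itself as the Fourier transform of an explicit Gaussian-times-monomial function, and then obtain $\hat f_\sigma$ by Fourier inversion.

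The key ingredient is the classical cosine--Gauss transform (which is essentially what \cite[(13.10.12)]{OLBC2010} provides):
\begin{equation*}
\int_0^\infty x^{\nu-1} e^{-\alpha x^2}\cos(\beta x)\,\dx \;=\; \frac{\Gamma(\nu/2)}{2\alpha^{\nu/2}}\, {_1}F_1\bigl(\tfrac{\nu}{2};\tfrac{1}{2};-\tfrac{\beta^2}{4\alpha}\bigr).
\end{equation*}
Specializing with $\nu=d$, $\alpha=2\pi^2\sigma^2$ and $\beta=2\pi\omega$ makes the $_1F_1$-argument equal $-\omega^2/(2\sigma^2)$, so the right-hand side becomes $\frac{\Gamma(d/2)}{2(2\pi^2\sigma^2)^{d/2}}\,f_\sigma(\omega)$. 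Extending the integration to all of $\R$ by evenness and rearranging then rewrites this identity as
\begin{equation*}
f_\sigma(\omega) \;=\; \hat g(\omega), \qquad g(x) \;\coloneqq\; \frac{(2\pi^2\sigma^2)^{d/2}}{\Gamma(d/2)}\,|x|^{d-1} e^{-2\pi^2\sigma^2 x^2}.
\end{equation*}

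Next I would invert. Since $g$ is real, even, and in $L^1(\R)$, and $f_\sigma$ also lies in $L^1(\R)$ (its decay follows from Kummer's transformation ${_1}F_1(\tfrac{d}{2};\tfrac{1}{2};z) = e^z\,{_1}F_1(\tfrac{1-d}{2};\tfrac{1}{2};-z)$ together with the standard $_1F_1$-asymptotics), Fourier inversion applies and gives $\hat f_\sigma = g$, i.e.,
\begin{equation*}
\hat f_\sigma(\omega) \;=\; \frac{(2\pi^2\sigma^2)^{d/2}}{\Gamma(d/2)}\,|\omega|^{d-1} e^{-2\pi^2\sigma^2\omega^2}.
\end{equation*}

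Finally, I would rewrite this to match the stated closed form by factoring $(2\pi^2\sigma^2)^{d/2}|\omega|^{d-1} = \sqrt{2}\,\pi\sigma\,(2\pi^2\sigma^2\omega^2)^{(d-1)/2}$ and using $\Gamma(\tfrac{d+2}{2})=\tfrac{d}{2}\Gamma(\tfrac{d}{2})$ to turn $1/\Gamma(d/2)$ into $d/(2\,\Gamma(\tfrac{d+2}{2}))$. The main ``obstacle'' is nothing conceptual; it is just careful bookkeeping with the $2\pi$-convention used for the Fourier transform and the factor $2$ arising when passing between the cosine integral on $(0,\infty)$ and the Fourier integral on $\R$, consistent with the author's remark that this is a ``short calculation.''
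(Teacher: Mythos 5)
Your proof is correct and arrives at the right constant, but it runs the key identity in the opposite direction from the paper, so the two proofs are genuinely different in structure. The paper's proof is a one-line citation plus bookkeeping: \cite[(13.10.12)]{OLBC2010} directly states the Fourier transform of $f_1={_1}F_1(\tfrac{d}{2};\tfrac12;-\tfrac{x^2}{2})$, namely
\begin{equation*}
\frac{1}{\sqrt{2\pi}}\int_\R f_1(x)\exp(-ix\omega)\,\d x=\frac{\sqrt{\pi}\,2^{-(d+1)/2}d\exp(-\omega^2/2)|\omega|^{d-1}}{\Gamma(\frac{d+2}{2})},
\end{equation*}
so all that remains is the substitution $f_\sigma=f_1(\cdot/\sigma)$ and the change of Fourier convention; no inversion theorem is needed. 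Your cosine--Gauss integral is the same Fourier pair read backwards: it gives the transform of $|x|^{d-1}e^{-\alpha x^2}$ in terms of ${_1}F_1$, not the transform of the ${_1}F_1$ itself, so it is the inversion-equivalent of (13.10.12) rather than that formula verbatim (your parenthetical slightly overstates the identification). Consequently you must pay for the reversal with the Fourier inversion theorem, which requires exactly the two checks you supply: $g\in L^1(\R)$ (immediate from Gaussian decay) and $f_\sigma=\hat g\in L^1(\R)$, which you correctly reduce via Kummer's transformation --- for odd $d$ the remaining ${_1}F_1$ factor is a polynomial so $f_\sigma$ has Gaussian decay, while for even $d$ the asymptotics give decay of order $|x|^{-d}$, integrable since $d\geq 2$ --- together with evenness of $g$ to identify $\hat{\hat g}=g(-\cdot)$ with $g$. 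What your route buys is self-containedness: the cosine--Gauss formula can be proved by expanding $\cos(\beta x)$ and integrating termwise against $x^{d-1}e^{-\alpha x^2}$, so your argument essentially derives the transform pair rather than citing the exact statement to be proved; what the paper's route buys is brevity and freedom from any integrability hypotheses. Your final algebraic reduction, via $(2\pi^2\sigma^2)^{d/2}|\omega|^{d-1}=\sqrt{2}\,\pi\sigma\,(2\pi^2\sigma^2\omega^2)^{(d-1)/2}$ and $\Gamma(\tfrac{d+2}{2})=\tfrac{d}{2}\Gamma(\tfrac{d}{2})$, matches the stated closed form, so both proofs are valid.
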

\begin{proof}
By \cite[(13.10.12)]{OLBC2010}, we have 
\begin{equation*}
\frac{1}{\sqrt{2\pi}}\int_\R f_1(x)\exp(-ix\omega)\d x=\frac{\sqrt{\pi}2^{-(d+1)/2}d\exp(-\omega^2/2)|\omega|^{d-1}}{\Gamma(\frac{d+2}{2})}.
\end{equation*}
Noting that we use a different notation of the Fourier transform than \cite{OLBC2010} (rescaling by $2\pi$) and using $f_\sigma(x)=f_1(x/\sigma)$ we obtain that
\begin{align}
\hat f_\sigma(\omega)&=\int_\R f_\sigma(x)\exp(-2\pi ix\omega)\d x
=\int_\R f_1(x/\sigma)\exp(-2\pi ix\omega)\d x\\
&=\sigma \int_\R f_1(x)\exp(- ix (2\pi\sigma\omega))\d x
=\frac{\sigma\pi2^{-d/2}d\exp(-(2\pi\sigma\omega)^2/2)|2\pi\sigma\omega|^{d-1}}{\Gamma(\frac{d+2}{2})}\\
&=\frac{d\pi\sigma\exp(-2\pi^2\sigma^2\omega^2)(2\pi^2\sigma^2\omega^2)^{(d-1)/2}}{\sqrt{2}\Gamma(\frac{d+2}{2})}
\end{align}
This concludes the proof.
\end{proof}

We plot $\hat f_\sigma$ for $\sigma=1$ and different choices of $d$ in Figure~\ref{fig:plot_fourier_transform}.
We can see that $\hat f_\sigma$ is almost zero despite on two peaks symmetrically located around $0$, where the distance between the peaks becomes larger for larger $d$.
Since $\hat f_\sigma$ converges exponentially fast to zero, we obtain for $\sigma$ small enough that
\begin{equation*}
f_\sigma(|x|)\approx \sum_{k\in Z}f_\sigma(|x+k|)\eqqcolon \phi(x),\quad x\in[-\tfrac12,\tfrac12).
\end{equation*}
By Poissons' summation formula (see, e.g., Theorem 2.26 in the text book \cite{PPST2018}) we have that
\begin{equation*}
\phi(x)=\sum_{k\in\Z}c_k\exp(-2\pi i k x)
\end{equation*}
with Fourier coefficients $c_k$ are given by
\begin{equation*}
c_k=\hat f_\sigma(k)=\frac{d\pi\sigma\exp(-2\pi^2\sigma^2k^2)(2\pi^2\sigma^2k^2)^{(d-1)/2}}{\sqrt{2}\Gamma(\frac{d+2}{2})}.
\end{equation*}
Note that the $c_k$ can be computed numerically stable in the log-space.
Moreover, we note that $c_k$ is only for very few $k$ significantly different from zero, see Figure~\ref{fig:plot_fourier_transform}. 
Therefore, we can truncate the Fourier transform by
\begin{equation*}
f_\sigma(|x|)\approx\phi(x)\approx\sum_{k\in\mathcal C}c_k\exp(-2\pi i k x)
\end{equation*}
where
\begin{equation}\label{eq:Fourier_cutoff}
\mathcal C=\{k\in\Z:-K_{\max}\le k\le K_{\max}, |c_k|>\epsilon\}
\end{equation}
for some small $\epsilon>0$ and $K_{\max}\in\Z_{>0}$. The thresholds $\epsilon$ and $K_{\max}$ are hyperparameters which have to be chosen properly. Due to the symmetry of $\hat f_\sigma$, we obtain that $k\in\mathcal C$ if and only if $-k\in \mathcal C$ such that the above approximation remains real-valued.
Considering $\hat f_\sigma$ from Figure~\ref{fig:plot_fourier_transform}, we see that we can approximate $f_\sigma$ with a small number $|\mathcal C|$ of Fourier coefficients, even though the elements of $\mathcal C$ are not close to zero for large $d$.

\subsection{Sliced Kernel Approximation and Slicing Error}\label{sec:slicing_err}

In practice, we evaluate sliced kernels of the form \eqref{eq:sliced_kernel} by replacing the expectation by a finite sum. That is, we approximate 
\begin{equation*}
K(x,y)\approx\frac1P\sum_{p=1}^P{\mathrm k}(P_{\xi_p}(x),P_{\xi_p}(y))
\end{equation*}
for random directions $\xi_1,...,\xi_P$.
The following theorem is based on Hoeffdings' inequality and bounds the error which is introduced by this approximation.

\begin{proposition}\label{thm:accuracy_P}
Let $x,y\in\R^d$ and let $K$ and $\mathrm{k}$ be related by \eqref{eq:sliced_kernel}. Moreover, assume that there exists $c_d(x,y)>0$ such that $|\mathrm{k}(z_1,z_2)|<c_d(x,y)$ for all $|z_1|,|z_2|<\max(\|x\|,\|y\|)$. Then, for $P$ iid directions $\xi_1,...,\xi_P\sim\mathcal U_{\mathbb S^{d-1}}$, it holds
\begin{equation*}
\mathbb P\Big[|\frac1P\sum_{p=1}^P\mathrm{k}(P_{\xi_p}(x),P_{\xi_p}(y))-K(x,y)|>t\Big]\leq 2\exp\Big(-\frac{P t^2}{2c_d(x,y)^2}\Big),
\end{equation*}
and
\begin{equation*}
\E\Big[\Big|\frac1P\sum_{p=1}^P\mathrm{k}(P_{\xi_p}(x),P_{\xi_p}(y))-K(x,y)\Big|\Big]\leq \frac{\sqrt{2\pi}c_d(x,y)}{\sqrt{P}} \in O\Big(\frac{c_d(x,y)}{\sqrt{P}}\Big).
\end{equation*}
\end{proposition}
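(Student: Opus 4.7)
The plan is to recognize the empirical average as an unbiased estimator built from iid bounded summands and apply Hoeffding's inequality; the second bound then follows from the layer-cake formula.

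First I would set $X_p \coloneqq \mathrm{k}(P_{\xi_p}(x), P_{\xi_p}(y))$ for $p = 1, \dots, P$. Since the directions $\xi_p$ are iid and $\mathrm{k}$ is measurable, the $X_p$ are iid, and the defining relation \eqref{eq:sliced_kernel} immediately yields $\E[X_p] = K(x,y)$. Thus $\tfrac{1}{P}\sum_{p=1}^P X_p - K(x,y)$ is the centered empirical mean of iid random variables, and it remains only to verify a uniform bound on the $X_p$.

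Next I would verify the boundedness assumption needed to apply Hoeffding. By Cauchy-Schwarz $|P_\xi(x)| = |\langle \xi, x\rangle| \leq \|\xi\|\|x\| = \|x\|$, and analogously $|P_\xi(y)| \leq \|y\|$; both projections therefore lie in $(-\max(\|x\|,\|y\|), \max(\|x\|,\|y\|))$. Invoking the hypothesis $|\mathrm{k}(z_1,z_2)| < c_d(x,y)$ for such $z_1, z_2$ yields $X_p \in (-c_d(x,y), c_d(x,y))$ almost surely. The classical Hoeffding inequality for iid summands supported on an interval of length $2c_d(x,y)$ then gives exactly the first claim, since $2Pt^2/(2c_d(x,y))^2 = Pt^2/(2c_d(x,y)^2)$.

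For the second bound, I would use the layer-cake representation $\E[|Z|] = \int_0^\infty \mathbb P[|Z| > t]\,\d t$ for the deviation $Z = \tfrac{1}{P}\sum_{p=1}^P X_p - K(x,y)$ and plug in the tail bound above. The resulting integral is a truncated Gaussian moment, which evaluates to $\sqrt{2\pi}\,c_d(x,y)/\sqrt{P}$ via the substitution $u = t\sqrt{P}/(c_d(x,y)\sqrt{2})$ together with $\int_0^\infty e^{-u^2}\,\d u = \sqrt{\pi}/2$. There is no substantial obstacle: the whole argument is a direct application of Hoeffding's inequality followed by an elementary Gaussian integral, and the only point meriting care is the factor-of-two bookkeeping between the Hoeffding denominator $(2c_d(x,y))^2$ and the stated $2c_d(x,y)^2$, together with the Cauchy-Schwarz step that turns the hypothesis on $\mathrm{k}$ into an almost-sure bound on the summands.
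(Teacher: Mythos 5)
Your proposal is correct and follows essentially the same route as the paper's own proof: bound the projections by Cauchy--Schwarz to get iid summands in $[-c_d(x,y), c_d(x,y)]$ with mean $K(x,y)$, apply Hoeffding's inequality for the tail bound, and then integrate the tail via the layer-cake formula and the Gaussian integral $\int_0^\infty e^{-u^2}\,\d u = \sqrt{\pi}/2$ to obtain the expectation bound. The only cosmetic difference is that the paper absorbs the factor $1/P$ into the summands $X_p$ before invoking Hoeffding, whereas you work with the empirical mean directly; the bookkeeping is equivalent.
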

\begin{proof}
\textbf{First inequality:}
Let $X_p=\frac1P\mathrm{k}(P_{\xi_p}(x),P_{\xi_p}(y))$ and $S_P=\sum_{p=1}^P X_p$. Since $|P_{\xi_p}(x)|=|\langle \xi_p,x\rangle|\leq \|\xi_p\|\|x\|=\|x\|$, we have by assumption that $-\frac{c_d(x,y)}{P}\leq X_p\leq \frac{c_d(x,y)}{P}$. Moreover, \eqref{eq:sliced_kernel} implies that $\E[S_P]=K(x,y)$.
Consequently, Hoeffding's inequality \cite{H1963} yields for all $t>0$ that
\begin{equation*}
\mathbb P\Big[|\frac1P\sum_{p=1}^P\mathrm{k}(P_{\xi_p}(x),P_{\xi_p}(y))-K(x,y)|>t\Big]=\mathbb P\Big[|S_P-\E[S_P]|>t\Big]\leq 2\exp\Big(-\frac{P t^2}{2c_d(x,y)^2}\Big).
\end{equation*}
\textbf{Second inequality:}
Denote by $X$ the random variable
\begin{align*}
X=|\frac1P\sum_{p=1}^P\mathrm{k}(P_{\xi_p}(x),P_{\xi_p}(y))-K(x,y)|.
\end{align*}
Then, we have by the first part that
\begin{align*}
\mathbb P[X>t]\leq 2\exp\Big(-\frac{P t^2}{2c_d(x,y)^2}\Big).
\end{align*}
Thus, we obtain
\begin{align*}
\E[X]=\int_0^\infty \mathbb P[X>t]\d t\leq 2\int_0^\infty \exp{\left(-\frac{P\ t^2}{2\ c_d(x,y)^2}\right)}\d t=\frac{\sqrt{2\pi}c_d(x,y)}{\sqrt{P}},
\end{align*}
where the last step follows from the identity $\int_0^\infty \exp(-t^2)\d t=\frac{\sqrt{\pi}}{2}$.
Inserting the definition of $X$, this completes the proof.
\end{proof}

The proposition yields a dimension-independent error bound whenever $\mathrm{k}$ is bounded independently from the dimension.
By the following corollary, this is in particular the case for the Gaussian kernel.
In contrast, for the negative distance kernel, the one-dimensional counterpart $\mathrm{k}$ is not bounded and its scaling depends on the dimension. Thus, the resulting error rate depends both on the dimension and the diameter of the data points.

\begin{theorem}[Error Bounds]\label{cor:error_bound}
Let $K\colon\R^d\to\R^d$ and $\mathrm{k}\colon\R\to\R$ be related by \eqref{eq:sliced_kernel}.
\begin{enumerate}
\item[(i)] Assume that $\mathrm{k}(x,y)\leq C$ for some $C$ independent of $d$. Then, it holds
\begin{equation*}
\E\Big[\Big|\frac1P\sum_{p=1}^P\mathrm{k}(P_{\xi_p}(x),P_{\xi_p}(y))-K(x,y)\Big|\Big]\leq  \frac{\sqrt{2\pi}C}{\sqrt{P}}\in O\Big(\frac{1}{\sqrt{P}}\Big).
\end{equation*}
Moreover, this assumption is fulfilled for the Gaussian kernel $K(x,y)=\exp(-\tfrac{\|x-y\|^2}{2\sigma^2})$ and its one-dimensional counterpart $\mathrm{k}(x,y)={_1}F_1(\tfrac{d}{2};\tfrac12;\tfrac{-|x-y|}{2\sigma^2})$ with $C=1$.
\item[(ii)] Let $K(x,y)=-\|x-y\|$ and $\mathrm{k}(x,y)=-\frac{\sqrt{\pi}\Gamma(\frac{d+1}{2})}{\Gamma(\frac{d}{2})}|x-y|$ its one-dimensional counterpart. Then it holds
\begin{equation*}
\E\Big[\Big|\frac1P\sum_{p=1}^P\mathrm{k}(P_{\xi_p}(x),P_{\xi_p}(y))-K(x,y)\Big|\Big]\leq  \frac{\sqrt{8}\pi \Gamma(\frac{d+1}{2})\|x-y\|}{\Gamma(\frac{d}{2})\sqrt{P}}\in O\Big(\frac{\sqrt{d}\|x-y\|}{\sqrt{P}}\Big).
\end{equation*}
\end{enumerate}
\end{theorem}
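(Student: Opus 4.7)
Both parts of the theorem reduce to identifying an appropriate bound $c_d(x,y)$ and then invoking Proposition~\ref{thm:accuracy_P}. For part~(i), setting $c_d(x,y)=C$ is immediate from the hypothesis, so the main content is to show that the one-dimensional Gaussian counterpart $f_\sigma(x) = {}_1F_1(\tfrac d2;\tfrac12;\tfrac{-x^2}{2\sigma^2})$ is bounded by $1$ uniformly in $d$ and $\sigma$. My plan here is to read this off Lemma~\ref{lem:Fourier_Gauss}: the stated Fourier transform is the product of a Gaussian and the nonnegative power $(2\pi^2\sigma^2\omega^2)^{(d-1)/2}$, hence $\hat f_\sigma \ge 0$ on $\R$. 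Fourier inversion at $x=0$ then gives $\int_\R \hat f_\sigma(\omega)\,\dx\omega = f_\sigma(0) = {}_1F_1(\tfrac d2;\tfrac12;0)=1$, so that for any $x\in\R$
\[ |f_\sigma(x)| = \Bigl|\int_\R \hat f_\sigma(\omega)\,e^{2\pi i x\omega}\,\dx\omega\Bigr| \le \int_\R \hat f_\sigma(\omega)\,\dx\omega = 1. \]
Thus $|\mathrm k(x,y)|\le 1$ and part~(i) follows with $C=1$.

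For part~(ii), the key observation is that the bound on $|\mathrm k|$ should be sharpened by exploiting that the slicing directions lie on the unit sphere. Abbreviating $c_d \coloneqq \sqrt\pi\,\Gamma(\tfrac{d+1}{2})/\Gamma(\tfrac d2)$, Cauchy--Schwarz gives
\[ |\mathrm k(P_\xi(x),P_\xi(y))| = c_d\,|\langle \xi, x-y\rangle| \le c_d\,\|x-y\|\qquad\text{for all } \xi\in\Sp^{d-1}. \]
Feeding this bound into Proposition~\ref{thm:accuracy_P} with constant $2c_d\|x-y\|$ (the factor of two accommodates the passage from the one-sided bound $\mathrm k\in[-c_d\|x-y\|,0]$ to the symmetric Hoeffding bound used there) yields
\[ \sqrt{2\pi}\cdot 2c_d\|x-y\|/\sqrt P \;=\; \sqrt 8\,\pi\,\frac{\Gamma(\tfrac{d+1}{2})}{\Gamma(\tfrac d2)}\,\frac{\|x-y\|}{\sqrt P}. \]
The $O(\sqrt d\,\|x-y\|/\sqrt P)$ rate then follows from the Stirling asymptotic $\Gamma(\tfrac{d+1}{2})/\Gamma(\tfrac d2) \sim \sqrt{d/2}$ as $d \to \infty$.

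I expect the only nontrivial step to be the uniform-in-$d$ bound on the confluent hypergeometric function in part~(i). A direct Kummer-transformation argument becomes cumbersome as $d$ grows, but the nonnegativity of $\hat f_\sigma$ supplied by Lemma~\ref{lem:Fourier_Gauss} reduces it to the one-line Fourier-inversion estimate above. Everything else amounts to bookkeeping around Hoeffding's inequality already executed in Proposition~\ref{thm:accuracy_P}, plus the standard Stirling estimate for ratios of Gamma functions.
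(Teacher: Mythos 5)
Your proposal is correct in substance and follows the paper's overall strategy (everything reduces to Proposition~\ref{thm:accuracy_P}), but the two parts deserve different verdicts. For part~(i), your argument is essentially the paper's with a nicer ending: the paper also uses nonnegativity of $\hat f_\sigma$ and Fourier inversion to get $|f_\sigma(x)|\le\int_\R\hat f_\sigma(\omega)\,\d\omega$, but then evaluates that integral by hand, substituting until it becomes $\frac{1}{\Gamma(d/2)}\int_0^\infty e^{-\omega}\omega^{d/2-1}\,\d\omega=1$. Your shortcut of reading the value off as $f_\sigma(0)={_1}F_1(\tfrac d2;\tfrac12;0)=1$ via inversion at $x=0$ is equally rigorous (both arguments need $f_\sigma$ continuous and $\hat f_\sigma\in L^1$, which hold) and avoids the explicit Gamma-integral computation.

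Part~(ii), as written, has a technical flaw in how it invokes Proposition~\ref{thm:accuracy_P}. The hypothesis of that proposition is a \emph{uniform} bound $|\mathrm k(z_1,z_2)|<c_d(x,y)$ over all $|z_1|,|z_2|<\max(\|x\|,\|y\|)$, whereas your Cauchy--Schwarz estimate $|\mathrm k(P_\xi(x),P_\xi(y))|\le c_d\|x-y\|$ bounds only the correlated pairs $(z_1,z_2)=(P_\xi(x),P_\xi(y))$ that actually occur as summands. With the constant $2c_d\|x-y\|$ the literal hypothesis genuinely fails: for $\|x\|=\|y\|$ large and $\|x-y\|$ small, one can take $z_1\approx-z_2\approx\max(\|x\|,\|y\|)$ so that $c_d|z_1-z_2|\gg 2c_d\|x-y\|$. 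This is exactly why the paper first invokes translation invariance, $K(x,y)=K(x-y,0)$ and $\mathrm k(P_\xi(x),P_\xi(y))=\mathrm k(P_\xi(x-y),0)$, and then applies the proposition to the pair $(x-y,0)$, for which $\max(\|x-y\|,0)=\|x-y\|$ and the uniform bound $c_d|z_1-z_2|\le 2c_d\|x-y\|$ does hold. Your argument is easily repaired, either by adding that translation step or by noting that the proof of Proposition~\ref{thm:accuracy_P} only ever uses the bound $|X_p|\le c/P$ on the summands, which your estimate supplies directly; but as a black-box citation it is not licensed. Two smaller remarks: your parenthetical about the factor of two is off, since a variable in $[-c_d\|x-y\|,0]$ already lies in the symmetric interval $[-c_d\|x-y\|,c_d\|x-y\|]$, so no doubling is needed and your inflation to $2c_d\|x-y\|$ is merely a valid weakening that happens to reproduce the constant $\sqrt8\pi$ (in the paper the factor $2$ arises genuinely from the triangle inequality $|z_1-z_2|\le|z_1|+|z_2|$; your tighter bound would in fact give $\sqrt2\pi$). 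Finally, the paper derives the $O(\sqrt d)$ rate from the nonasymptotic Kershaw bound $\Gamma(\tfrac{d+1}2)/\Gamma(\tfrac d2)\le\sqrt{\tfrac d2+\sqrt{3/4}-1}$, while your Stirling asymptotics suffice for the stated big-$O$ claim.
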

\begin{proof}
\begin{enumerate}
\item[(i)] The first statement follows directly by Proposition~\ref{thm:accuracy_P}.
Thus, it remains to show that $|f_\sigma(x)|\leq 1$.
Using Hölders inequality, the Fourier inversion formula and the formula for $\hat f_\sigma$ from Lemma~\ref{lem:Fourier_Gauss}, we have
\begin{align*}
|f_\sigma(x)|
&=
\Big|\int_\R \hat f_\sigma(\omega)\exp(2\pi i \omega x)\d \omega\Big|
\leq
\int_\R|\hat f_\sigma(\omega)|\d \omega\|\exp(2\pi i x \cdot)\|_{L^\infty}=\int_\R\hat f_\sigma(\omega)\d \omega
\\
&=
\frac{d\pi\sigma}{\sqrt{2}\Gamma(\frac{d}{2}+1)}\int_\R\exp(-2\pi^2\sigma^2\omega^2)(2\pi^2
\sigma^2\omega^2)^{(d-1)/2}\d \omega
\\
&=\frac{1}{\Gamma(\frac{d}{2})}\int_\R\exp(-\omega^2)(\omega^2)^{(d-1)/2}\d \omega,
\end{align*}
where we substituted $\sqrt{2}\pi\sigma\omega$ by $\omega$ and used $\frac{d}{2}\Gamma(\frac{d}{2})=\Gamma(\frac{d}{2}+1)$ in the last step. By the symmetry of the integral and substituting $\omega^2$ by $\omega$, this is equal to
\begin{equation*}
\frac{2}{\Gamma(\frac{d}{2})}\int_0^\infty\exp(-\omega^2)(\omega^2)^{(d-1)/2}\d \omega=\frac{1}{\Gamma(\frac{d}{2})}\int_0^\infty\exp(-\omega)\omega^{d/2-1}\d \omega=1,
\end{equation*}
where the last step is the definition of the Gamma function.

\item[(ii)] Note that it holds $K(x,y)=K(x-y,0)$ and $\mathrm{k}(P_{\xi}(x),P_{\xi}(y))=\mathrm{k}(P_{\xi}(x-y),0)$. Moreover, it holds $|\mathrm{k}(z_1,z_2)|=\frac{\sqrt{\pi}\Gamma(\frac{d+1}{2})}{\Gamma(\frac{d}{2})}|z_1-z_2|\leq \frac{2\sqrt{\pi}\Gamma(\frac{d+1}{2})}{\Gamma(\frac{d}{2})}\|x-y\|$ for all $|z_1|,|z_2|\leq\|x-y\|$. Inserting this in Proposition~\ref{thm:accuracy_P} and applying the bound $\frac{\Gamma(\frac{d+1}{2})}{\Gamma(\frac{d}{2})} \leq \sqrt{\frac{d}{2}+\sqrt{\frac{3}{4}}-1}$ proven in \cite{Kershaw83} yields the claim.
\end{enumerate}
\end{proof}

We conjecture that the assumptions of part (i) are also true for the Laplacian and Mat\'ern kernel for $C=1$, even though we are not able to provide a formal proof. The plots in Figure~\ref{fig:basis_functions} and the numerical examples in Section~\ref{sec:numerics} seem to confirm this conjecture. Moreover, the rate from part (ii) can be refined by a closer analysis to $O\big(\frac{\|x-y\|}{\sqrt{P}}\big)$, see \cite{HJQ2024}. Note that for many applications (and in the numerical examples in Section~\ref{sec:numerics}) the diameter $\|x-y\|$ still depends on the dimension $d$ by $O(\sqrt{d})$.

\section{Fast Kernel Summation via Slicing}\label{sec:fast_kernel_summation}

Next, we apply our findings from the previous section in order to compute 
kernel summations of the form
\begin{equation*}
s_m=\sum_{n=1}^Nw_nK(x_n,y_m)
\end{equation*}
in a fast way.
To this end, we assume that $K(x,y)=F(\|x-y\|)$ is a radial kernel of the form \eqref{eq:sliced_kernel} for some $\mathrm{k}(x,y)=f(|x-y|)$.
Then we have that
\begin{equation*}
s_m=\E_{\xi\sim \mathcal U_{\Sp^{d-1}}}\Big[\sum_{n=1}^Nw_n\mathrm{k}(P_\xi(x_n),P_\xi(y_m))\Big]\approx \frac1P\sum_{p=1}^P \sum_{n=1}^Nw_n\mathrm{k}(P_{\xi_p}(x_n),P_{\xi_p}(y_m)),
\end{equation*}
where the approximation error in the last step can be bounded by Proposition~\ref{thm:accuracy_P} and Theorem~\ref{cor:error_bound}.
In particular, we can compute $P$ one-dimensional kernel sums instead of one $d$-dimensional kernel sum.
In this section, we focus on the computation of the one-dimensional kernel summations
\begin{equation*}
t_m=\sum_{n=1}^Nw_n\mathrm{k}(x_n,y_m)
\end{equation*}
for $x_1,...,x_N,y_1,...,y_M\in\R$. 
Then, the arising computation procedure of $s_m$ is outlined in Algorithm~\ref{alg:fast_Fourier_sliced}.
Note that the method can also treat vector-valued kernels, see e.g.~\cite{CDT2006}, by computing the kernel sums for every component separately.

As outlined in the ``related work'' section in the introduction, there is long history in the literature for computing the $t_m$ computationally efficient.
If $\mathrm{k}$ is a smooth kernel, we apply here the fast Fourier summation from \cite{KPS2006,PSN2004}, see also the text book \cite[Sec~7.5]{PPST2018}.
We revisit the fast Fourier summation in Subsection~\ref{sec:1d}.
Moreover, we consider two important examples of non-smooth kernels.
For the negative distance kernel, we propose a sorting algorithm based on \cite{HWAH2023} for computing the $t_m$ in Subsection~\ref{sec:energy}.
Finally, we decompose the Laplacian kernel into a smooth part and a negative distance part and combine the two previous approaches in Subsection~\ref{sec:laplacian}.

\begin{algorithm}[t]
\begin{algorithmic}
\State Draw $P$ random directions $\xi_1,...,\xi_P$ from $\mathcal U_{\mathbb S^{d-1}}$.
\For{$p=1....,P$}
\State Compute $t_{j,p}=\sum_{i=1}^Nw_i\mathrm{k}(P_{\xi_p}(x_i),P_{\xi_p}(y_j))$ via one-dimensional fast kernel summation.
\EndFor
\State Average the results by $s_j=\frac1P\sum_{p=1}^Pt_{j,p}$
\end{algorithmic}
\caption{Sliced Fast Fourier Summation}
\label{alg:fast_Fourier_sliced}
\end{algorithm}

\subsection{Smooth Kernels: Fast Fourier Summation}\label{sec:1d}

Let $\phi\colon[-\tfrac12,\tfrac12)\to\R$ be a periodic square-integrable function.
Later, $\phi$ will be related to some one-dimensional radial kernel $\mathrm{k}(x,y)=f(|x-y|)$ by $\mathrm{k}(x,y)=\phi(x-y)$ for all $x,y\in[-\frac12,\frac12)$ with $|x-y|<\tfrac12$, that is $\phi=f(|\cdot|)$.
Moreover, let $x_1,...,x_N\in[-\frac12,\frac12)$ and $y_1,...,y_M\in[-\frac12,\frac12)$ such that $\max_{n=1,...,N,m=1,...,M}|x_n-y_m|<\tfrac12$ and let $w_1,...,w_N\in\R$ be some weights.
Then, we aim to compute the sums
\begin{equation}\label{eq:large_sum}
t_m\coloneqq\sum_{n=1}^N w_n \phi(x_n-y_m)\quad\text{for all }m=1,...,M
\end{equation}
in a fast way.
Naively, computing all sums in \eqref{eq:large_sum} requires to evaluate all $MN$ summands.
In order to improve this computational complexity, we apply fast Fourier summations, see \cite{PPST2018}. 
More precisely, we expand $\phi$ in its Fourier series and truncate it by
\begin{equation}\label{eq:Fourier_truncation}
\phi(x)=\sum_{k\in\Z}c_k\exp(-2\pi ikx)\approx\sum_{k\in \mathcal C}c_k\exp(-2\pi ikx),
\end{equation}
where $\mathcal C$ is a finite subset of $\Z$.
In order to ensure that the right side of \eqref{eq:Fourier_truncation} remains real-valued, we assume that $k\in\mathcal C$ if and only if $-k\in\mathcal C$.
Then, \eqref{eq:large_sum} reads as
\begin{equation*}
t_m=\sum_{n=1}^N\sum_{k\in \mathcal C} w_nc_k\exp(2\pi ik(y_m-x_n))=
\sum_{k\in\mathcal C}c_k\exp(2\pi iky_m)\underbrace{\sum_{i=n}^N w_n\exp(-2\pi ikx_n)}_{\eqqcolon \hat w_k}
\end{equation*}
Now, the computation of the coefficients $\hat w=(\hat w_k)_{k\mathcal C}$ is given by the matrix-vector mulitplication $\hat w=\mathcal F_{\mathcal C,x}^{\adj} w$ of the matrix $\mathcal F_{\mathcal C,x}^{\adj}=(\exp(-2\pi ikx_n))_{k\in C,n=1}^{N}$ with the vector $w=(w_n)_{n=1}^N$.
This corresponds to the adjoint discrete Fourier transform on the non-equispaced grid $x=(x_1,...,x_N)$ (NDFT).
Once $\hat w$ is computed, we can derive the sums $t=(t_m)_{m=1}^M$ by
\begin{equation*}
t=\sum_{k\in \mathcal C}c_k\hat w_k\exp(2\pi iky_m)=\mathcal F_{\mathcal C,y} (c\odot \hat w)=\mathcal F_{\mathcal C,y} (c\odot (\mathcal F_{\mathcal C,x}^{\adj} w)),
\end{equation*}
where $\mathcal F_{\mathcal C,y}=(\exp(-2\pi iky_m))_{m=1,k\in\mathcal C}^{M}$ is the discrete Fourier transform on the non-equisapced grid $y=(y_1,...,y_M)$ and $(c\odot \hat w)$ is the elementwise multiplication of the vectors $c=(c_k)_{k\in\mathcal C}$ and $\hat w$.

For the computation of $\mathcal F_{\mathcal C,x}^{\adj} w$, we have to evaluate $2|\mathcal C|N$ terms, for the elementwise product of $c$ and $\hat w$ we need $|\mathcal C|$ operations and for the multiplication with $\mathcal F_{\mathcal C,y}$ we require $2|\mathcal C|M$ operations.
Summarized this is a computational complexity of $O((M+N)|\mathcal C|)$, which is a significant improvement over $O(MN)$ as long as the number $|\mathcal C|$ of used Fourier coefficients is small.

\begin{remark}[Non-equispaced FFT]
For large $|\mathcal C|$ the computational complexity
$O((M+N)|\mathcal C|)$ can be improved to $O(M+N+|\mathcal C|\log(|\mathcal C|))$ by using the non-equispaced fast Fourier transforms (NFFT) \cite{B1995,DR1993,PST2001}. For a detailed explanation, we refer to the text book \cite{PPST2018} and the references therein.

In the numerical part, we use either the NDFT or the NFFT depending on the size of $|\mathcal C|$. For the Gaussian kernel, it turns out that $|\mathcal C|$ can be chosen very small, but the elements of $\mathcal C$ are not centered around zero, see Section~\ref{sec:Gauss}. Consequently, the NDFT is faster in this case.
For other kernels, like the Mat\'ern or the Laplacian kernel, we use the NFFT with $\mathcal C=\{-K,...,K\}$ for some $K\in \N$.
\end{remark}

\begin{remark}[Smoothness]\label{rem:smoothness}
The run time of the fast Fourier summation heavily depends on the number $|\mathcal C|$ of Fourier coefficients which is required to keep the approximation error in \eqref{eq:Fourier_truncation} small.
This number is again known to depend on the smoothness of the function $\phi$ by the Bernstein theorem, see, e.g., \cite[Thm 1.39]{PPST2018}. 
For non-smooth functions, the Fourier coefficients may decay very slowly, such that very large choices of $|\mathcal C|$ are required to keep the approximation error in \eqref{eq:Fourier_truncation} small. This leads to an intractable slow-down of the method.
As a remedy, one can include additional error correction methods for excluding points close to the non-smooth part of $\phi$, see e.g., \cite{TSGSW2011}.
For some kernels like the negative distance and Laplacian kernel, we propose alternative algorithms in Subsection~\ref{sec:energy} and Subection~\ref{sec:laplacian}.
\end{remark}

\paragraph{Rescaling}
So far, we assumed that $x_1,...,x_N\in[-\frac12,\frac12)$ and $y_1,...,y_M\in[-\frac12,\frac12)$ such that $\max_{n=1,...,N,m=1,...,M}|x_n-y_m|<\tfrac12$. In practice, this assumption might be restrictive.
Therefore, we rescale the summations \eqref{eq:large_sum},
Let $c_{\min}=\min(x_1,...,x_N,y_1,...,y_M)$ and $c_{\max}=\max(x_1,...,x_N,y_1,...,y_M)$ and let $T<0.25$ be a threshhold.
Then, we have that
\begin{equation*}
t_m=\sum_{n=1}^N w_n \phi(x_n-y_m)=\sum_{n=1}^N w_n \tilde \phi(\tilde x_n-\tilde y_m)
\end{equation*}
with
\begin{equation*}
\tilde \phi(x)=\phi(\frac{x+T}{\tau}),\quad \tilde x_n=\tau x_n-T\in(-0.25,0.25) ,\quad \tau=\frac{2T}{c_{\max}-c_{\min}}.
\end{equation*}
In particular, we have that it holds $\tilde x_1,...,\tilde x_N\in[-\frac12,\frac12)$ and $\tilde y_1,...,\tilde y_M\in[-\frac12,\frac12)$ such that $\max_{n=1,...,N,m=1,...,M}|\tilde x_n-\tilde y_m|<\tfrac12$.

\begin{remark}
Note that the scaling of the data points corresponds to an inverse scaling in the frequency domain. Consequently, the rescaling procedure comes with the cost of requiring a larger number $|\mathcal C|$ of Fourier coefficients. However, it is necessary in order to make the discrete Fourier transform applicable.
\end{remark}

\subsection{Negative Distance Kernel: Sorting Algorithm}\label{sec:energy}

Let $\mathrm{k}(x,y)=-c_d|x-y|$ with $c_d=\frac{\sqrt{\pi}\Gamma(\frac{d+1}{2})}{\Gamma(\frac{d}2)}$ be the one-dimensional counterpart of $K(x,y)=-\|x-y\|$.
Note that this kernel is not smooth such that the fast Fourier summation from Section~\ref{sec:1d} is not applicable.
In \cite{HWAH2023}, the authors proposed an efficient sorting algorithm to compute the derivative of maximum mean discrepancies.
Even though the algorithm from \cite{HWAH2023} is not directly applicable to the fast summation method, we can use similar ideas for computing the coefficients $s_m$. The resulting sorting algorithm is exact and does not induce any error like the approach from the previous subsection.

In the following, let $(z_1,...,z_{N+M})=(y_1,...,y_M,x_1,...,x_N)$ and denote $(v_1,...,v_{N+M})=(0,...,0,w_1,...,w_N)$.
Then, it holds
\begin{equation*}
t_m=-\sum_{n=1}^Nc_dw_n|x_n-y_m|=-\sum_{n=1}^{N+M}c_dv_n|z_n-z_m|.
\end{equation*}
Thus, we extend the definition of $t$ to $m>M$ by $t_m=-\sum_{n=1}^{N+M}c_dv_n|z_n-z_m|$.

\paragraph{Case $z_1\leq...\leq z_{N+M}$}

First we add a zero, such that
\begin{equation*}
t_m=-\sum_{n=1}^Nc_dw_n|x_n-y_m|=-\sum_{n=1}^{N+M}c_dv_n|z_n-z_m|.
\end{equation*}
For $m>n$ it holds $|z_n-z_m|=\sum_{i=n+1}^m z_i-z_{i-1}$ and for $m<n$ we have $|z_n-z_m|=\sum_{i=m+1}^n z_i-z_{i-1}$. Thus it holds
\begin{equation*}
t_m=-\sum_{n=1}^{m-1}c_dv_n\sum_{i=n+1}^m z_i-z_{i-1}-\sum_{n=m+1}^{N+M}c_dv_n\sum_{i=m+1}^nz_i-z_{i-1}.
\end{equation*}
Counting the occurrences of $z_i-z_{i-1}$ for $i=2,...,N+M$, we obtain
\begin{equation*}
t_m=-\sum_{i=2}^{m}\Big(\sum_{n=1}^{i-1}c_dv_n\Big) (z_i-z_{i-1})-\sum_{i=m+1}^{N+M}\Big(\sum_{n=i}^{N+M}c_dv_n\Big) (z_i-z_{i-1})
\end{equation*}
Using 
\begin{equation*}
\sum_{n=i}^{N+M}c_dv_n=\sum_{n=1}^{N+M}c_dv_n-\sum_{n=1}^{i-1}c_dv_n
\end{equation*}
this becomes
{\small
\begin{align*}
t_m&=-\sum_{i=2}^{m}\Big(\sum_{n=1}^{i-1}c_dv_n\Big) (z_i-z_{i-1})+\sum_{i=m+1}^{N+M}\Big(\sum_{n=1}^{i-1}c_dv_n\Big) (z_i-z_{i-1})-\Big(\sum_{n=1}^{N+M}c_dv_n\Big)\sum_{i=m+1}^{N+M} (z_i-z_{i-1})\\
&=-\sum_{i=2}^{m}\Big(\sum_{n=1}^{i-1}c_dv_n\Big) (z_i-z_{i-1})+\sum_{i=m+1}^{N+M}\Big(\sum_{n=1}^{i-1}c_dv_n\Big) (z_i-z_{i-1})-\Big(\sum_{n=1}^{N+M}c_dv_n\Big) (z_{N+M}-z_{m}).
\end{align*}}%
With 
\begin{equation*}
\sum_{i=m+1}^{N+M}\Big(\sum_{n=1}^{i-1}c_dv_n\Big) (z_i-z_{i-1})=\sum_{i=2}^{N+M}\Big(\sum_{n=1}^{i-1}c_dv_n\Big) (z_i-z_{i-1})-\sum_{i=2}^{m}\Big(\sum_{n=1}^{i-1}c_dv_n\Big) (z_i-z_{i-1})
\end{equation*}
we obtain
\begin{equation*}
t_m=\sum_{i=2}^{N+M}\Big(\sum_{n=1}^{i-1}c_dv_n\Big) (z_i-z_{i-1})-2\sum_{i=2}^{m}\Big(\sum_{n=1}^{i-1}c_dv_n\Big) (z_i-z_{i-1})-\Big(\sum_{n=1}^{N+M}c_dv_n\Big) (z_{N+M}-z_{m}).
\end{equation*}
Now, using the notations $a_{i-1}=\sum_{n=1}^{i-1}c_dv_n$, $b_{m-1}=\sum_{i=2}^{m}a_{i-1} (z_i-z_{i-1})=\sum_{i=1}^{m-1}a_i(z_{i+1}-z_i)$, we obtain that
\begin{equation*}
t_m=b_{M+N-1}-2b_{m-1}-a_{N+M}(z_{N+M}-z_m)
\end{equation*}
the following linear-time algorithm to compute all $t_m$ for $m=1,...,N+M$ as outlined in Algorithm~\ref{alg:fastsum_energy} using the cumsum function.

\begin{algorithm}[t]
\begin{algorithmic}
\State Input: $z_1\leq ...\leq z_{N+M}$, $v_1,...,v_{N+M}\in\R$.
\State Output: $t_m=-\sum_{n=1}^{N+M}c_dv_n|z_n-z_m|$.
\State Set $a=\mathrm{cumsum}(c_d v)$.
\State Set $\tilde a_i=a_i(z_{i+1}-z_i)$, for $i=1,...,N+M$.
\State Set $b=\mathrm{cumsum}(\tilde a)$.
\State Set $t_m=b_{M+N-1}-2b_{m-1}-a_{N+M}(z_{N+M}-z_m)$, for $m=1,...,N+M$.
\end{algorithmic}
\caption{Fast summation for the negative distance kernel}
\label{alg:fastsum_energy}
\end{algorithm}

\paragraph{General case}

Let $\sigma\colon\{1,...,N+M\}\to\{1,...,N+M\}$ be a permutation such that $z_{\sigma(1)}\leq ...\leq z_{\sigma(N+M)}$.
By interchanging the summation order, we obtain
\begin{equation*}
t_m=-\sum_{n=1}^{N+M}c_dv_{\sigma(n)}|z_{\sigma(n)}-z_{m}|
\end{equation*}
such that we obtain for $i=\sigma^{-1}(m)$ that
\begin{equation*}
t_{\sigma(i)}=-\sum_{n=1}^{N+M}c_dv_{\sigma(n)}|z_{\sigma(n)}-z_{\sigma(i)}|
\end{equation*}
Now, $t_{\sigma(i)}$ for $i=1,...,N+M$ can be computed as by the previous. Applying the inverse permutation $\sigma^{-1}$
gives the result.

\subsection{Laplacian Kernel: Decomposition}\label{sec:laplacian}

Finally, let $K(x,y)=\exp(-\alpha\|x-y\|)$ be the Laplacian kernel and $\mathrm{k}$ its one-dimensional counterpart. Since $K$ and $\mathrm{k}$ are not differentiable
we cannot apply the fast Fourier summation from Section~\ref{sec:fast_kernel_summation} directly.
Instead, we decompose $K$ as
\begin{equation*}
K=K_1+K_2,\quad K_1(x,y)=\exp(-\alpha\|x-y\|)+\alpha\|x-y\|,\quad K_2(x,y)=-\alpha\|x-y\|.
\end{equation*}
We denote the corresponding one-dimensional kernels by $\mathrm{k}_1$ and $\mathrm{k}_2$, which can be expressed as linear combinations of the kernels from Table~\ref{tab:kernels}.
Now, it is easy to check that $K_1$ and $\mathrm{k}_1$ are differentiable such that the fast summation from Section~\ref{sec:fast_kernel_summation} can be applied for computing
$
\sum_{n=1}^N w_n\mathrm{k}_1(x_n,y_m).
$
Moreover, $\mathrm{k}_2$ is the negative distance kernel such that we can also compute $\sum_{n=1}^N w_n\mathrm{k}_2(x_n,y_m)$ by the sorting algorithm outlined in the previous subsection.
Putting both summations together, we can compute
\begin{equation*}
t_m=\sum_{n=1}^N w_n\mathrm{k}(x_n,y_m)=\sum_{n=1}^N w_n\mathrm{k}_1(x_n,y_m)+\sum_{n=1}^N w_n\mathrm{k}_2(x_n,y_m).
\end{equation*}

\section{Numerical Examples}\label{sec:numerics}

Within this section, we aim to verify our findings numerically. It is not the purpose of these examples to demonstrate the power of kernel methods since this was considered extensively in the literature. Instead, we want to highlight the computational advantages of our method. To this end, we provide a run time 
comparison and error analysis for the fast kernel summation for different kernels. 

Afterwards, we compare slicing to random Fourier features. Finally, we discuss some scalability aspects and present a large-scale GPU comparison of slicing with KeOps \cite{CFGCD2021}.

The code for our experiments is written in PyTorch and Julia and is available online\footnote{available at \url{https://github.com/johertrich/sliced_kernel_fastsum}}. 
For computing the NFFT, we use the torch-NFFT package\footnote{\url{https://github.com/dominikbuenger/torch_nfft}} in PyTorch and the NFFT3-package \cite{KKP2009,S2018}\footnote{\url{https://www-user.tu-chemnitz.de/~potts/nfft/}} in Julia.

\subsection{Experimental Setup}

In order to compare the run time and to analyze the error of the fast kernel summation, we draw $N$ samples
$x_1,...,x_N\in\R^d$ and $M=N$ samples $y_1,...,y_M\in\R^d$ from $\mathcal N(0,0.1^2 I)$. We set $d=50$ and choose the weights $w_1,...,w_N$ randomly uniformly distributed in $[0,1]$.
Then, we compute the kernel summations 
\begin{equation*}
s_m=\sum_{n=1}^N w_n K(x_n,y_m),\quad m=1,...,M
\end{equation*}
for the Gaussian kernel with $\sigma=1$, the Mat\'ern kernel with $\nu=3/2$ and $\beta=1$, the negative distance kernel and the Laplacian kernel for $\alpha=0.5$.
In all cases, we first compute the kernel sums exactly and then via the fast kernel summation as proposed in Section~\ref{sec:fast_kernel_summation}.
For the Gaussian kernel we use the NDFT with the Fourier coefficients chosen as in \eqref{eq:Fourier_cutoff}.
We decompose the Laplacian kernel into a smooth and a negative distance kernel as outlined in Section~\ref{sec:laplacian}, where we use the NFFT with 200 Fourier coefficients for the smooth part and the sorting algorithm from Section~\ref{sec:energy} for the negative distance part.
For the Mat\'ern kernel, we use the NFFT with 1024 Fourier coefficients. The number of Fourier coefficients are manually chosen as small as possible such that the error in the non-equispace Fourier transform is not relevant. An automatic selection criterion for these parameters is left for future research. Note that the Mat\'ern kernel with $\nu\geq3/2$ is continuously differentiable which ensures the fast decay of the Fourier coefficients. For $\nu<3/2$ the Mat\'ern kernel is no longer continuously differentiable. However, for $\nu=1/2$ it coincides with the Laplacian kernel and for $\nu\neq(2n+1)/2$, $n\in\Z_{\geq0}$ the computation of the Mat\'ern kernel involves the evaluation of modified Bessel functions which is computationally costly such that those kernels are rarely used in practice.
We report the run time and the per-summand error $\frac{\|s_\text{true}-s_\text{approx}\|_1}{M\sum_{n=1}^N|w_n|}$.

\begin{figure}[!h]
\centering
\begin{subfigure}{.3\textwidth}
\includegraphics[width=\textwidth]{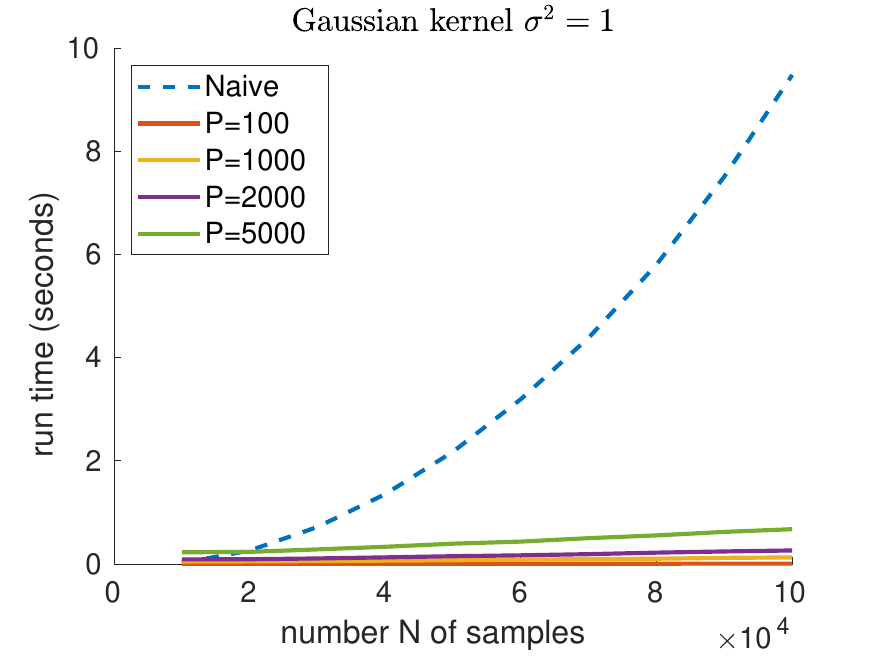}
\end{subfigure}
\hspace{.2cm}
\begin{subfigure}{.3\textwidth}
\includegraphics[width=\textwidth]{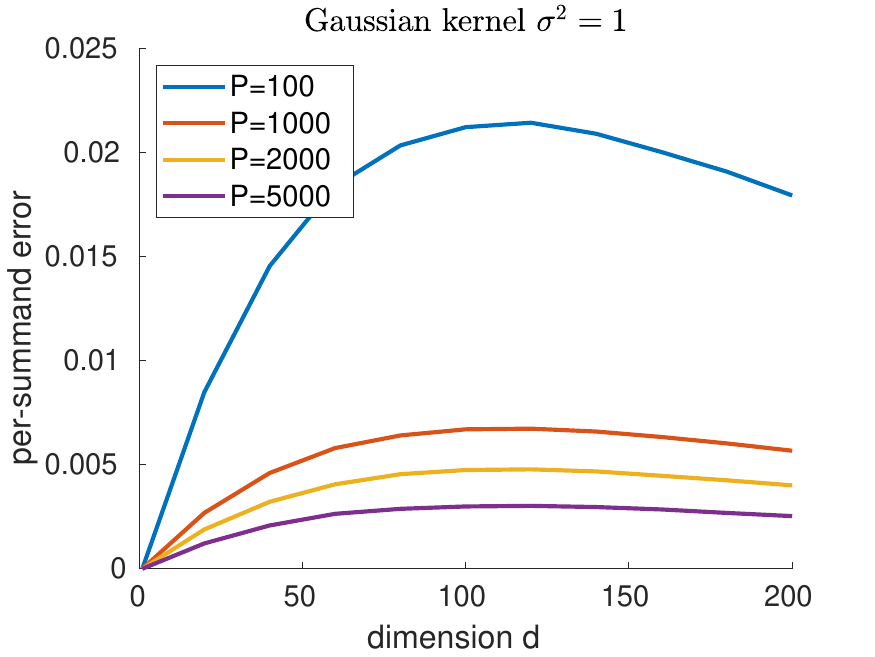}
\end{subfigure}
\hspace{.2cm}
\begin{subfigure}{.3\textwidth}
\includegraphics[width=\textwidth]{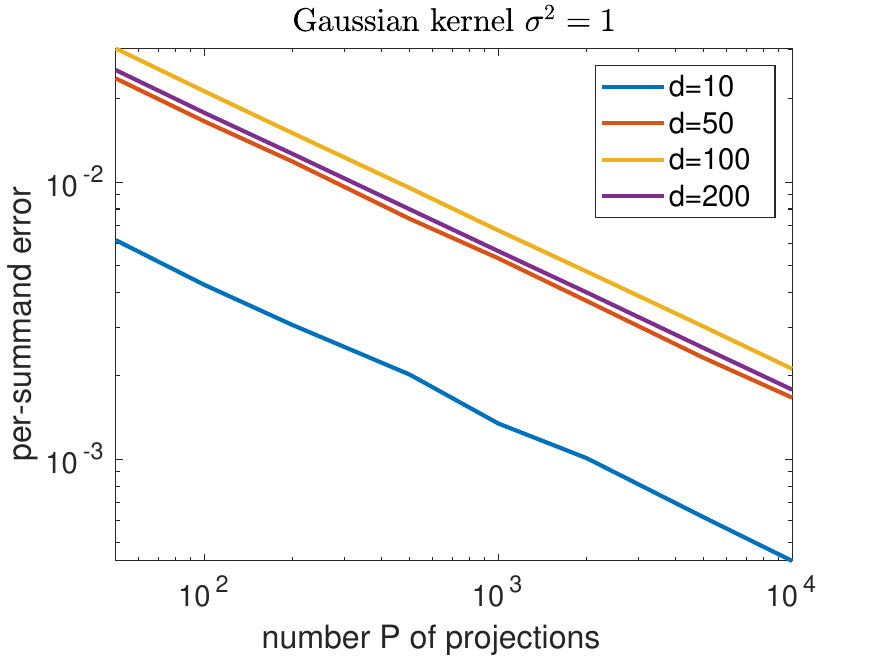}
\end{subfigure}

\begin{subfigure}{.3\textwidth}
\includegraphics[width=\textwidth]{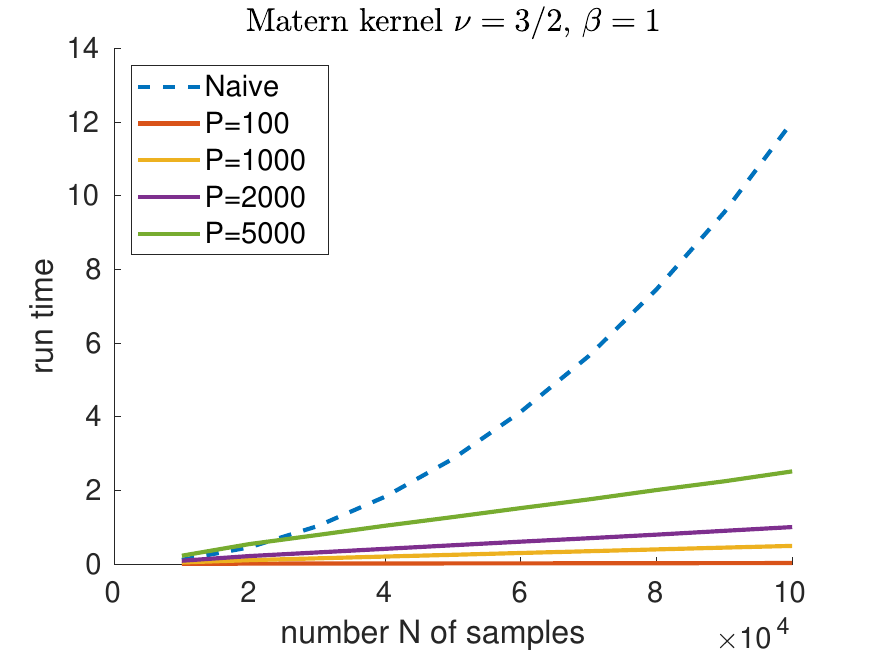}
\end{subfigure}
\hspace{.2cm}
\begin{subfigure}{.3\textwidth}
\includegraphics[width=\textwidth]{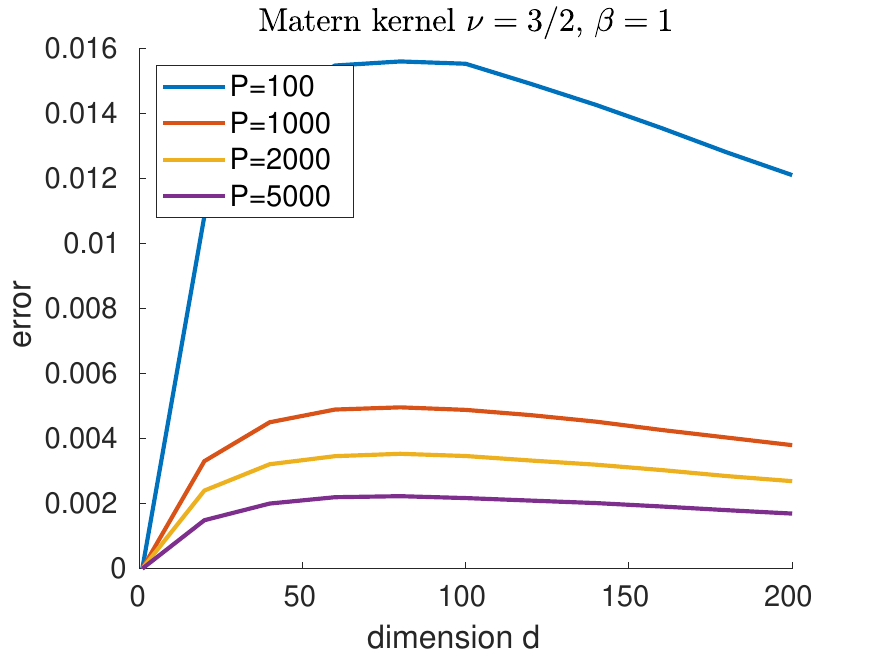}
\end{subfigure}
\hspace{.2cm}
\begin{subfigure}{.3\textwidth}
\includegraphics[width=\textwidth]{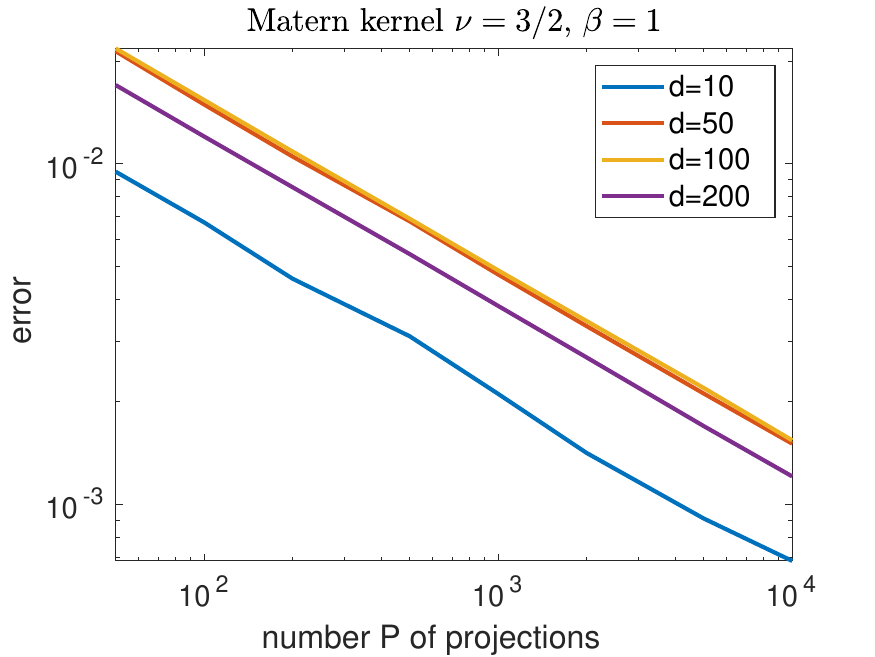}
\end{subfigure}

\begin{subfigure}{.3\textwidth}
\includegraphics[width=\textwidth]{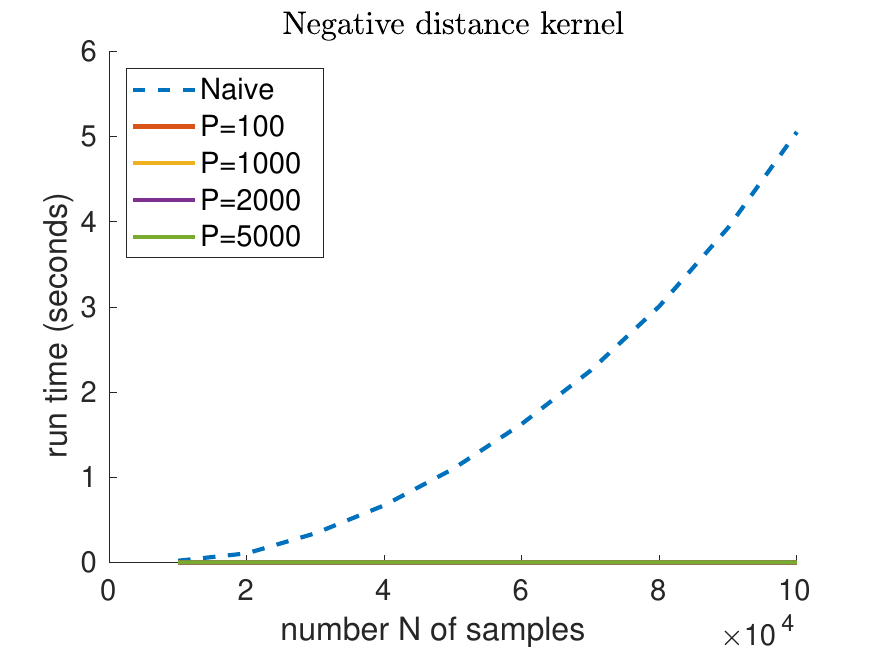}
\end{subfigure}
\hspace{.2cm}
\begin{subfigure}{.3\textwidth}
\includegraphics[width=\textwidth]{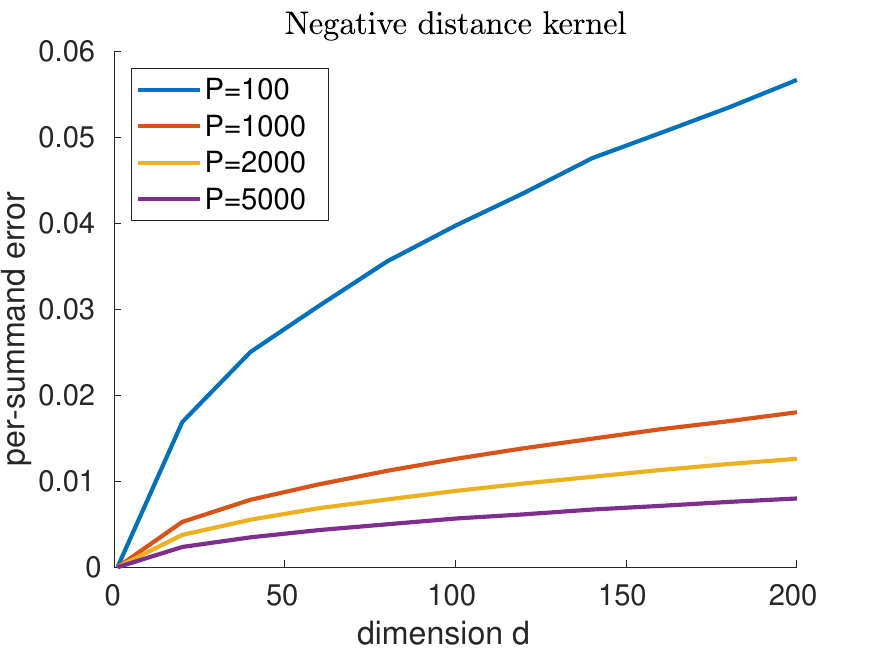}
\end{subfigure}
\hspace{.2cm}
\begin{subfigure}{.3\textwidth}
\includegraphics[width=\textwidth]{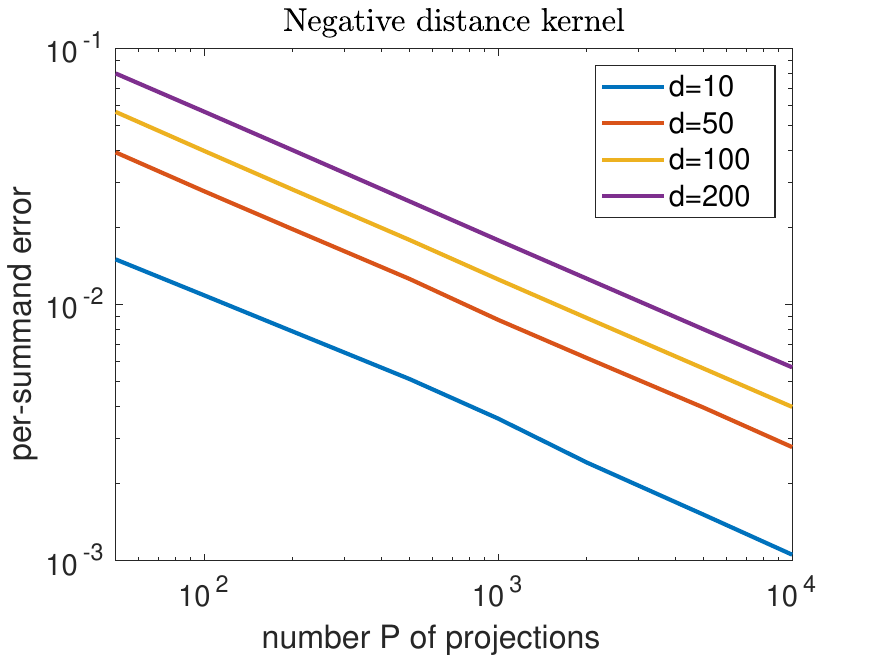}
\end{subfigure}

\begin{subfigure}{.3\textwidth}
\includegraphics[width=\textwidth]{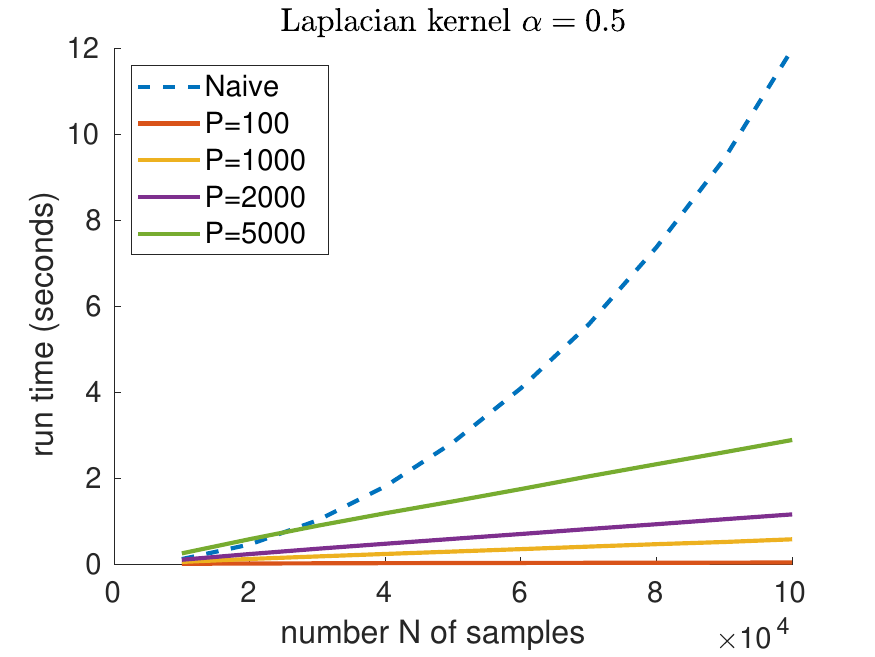}
\caption*{\scriptsize Run time dependence on $N$ for $d=50$ and several values of $P$.}
\end{subfigure}
\hspace{.2cm}
\begin{subfigure}{.3\textwidth}
\includegraphics[width=\textwidth]{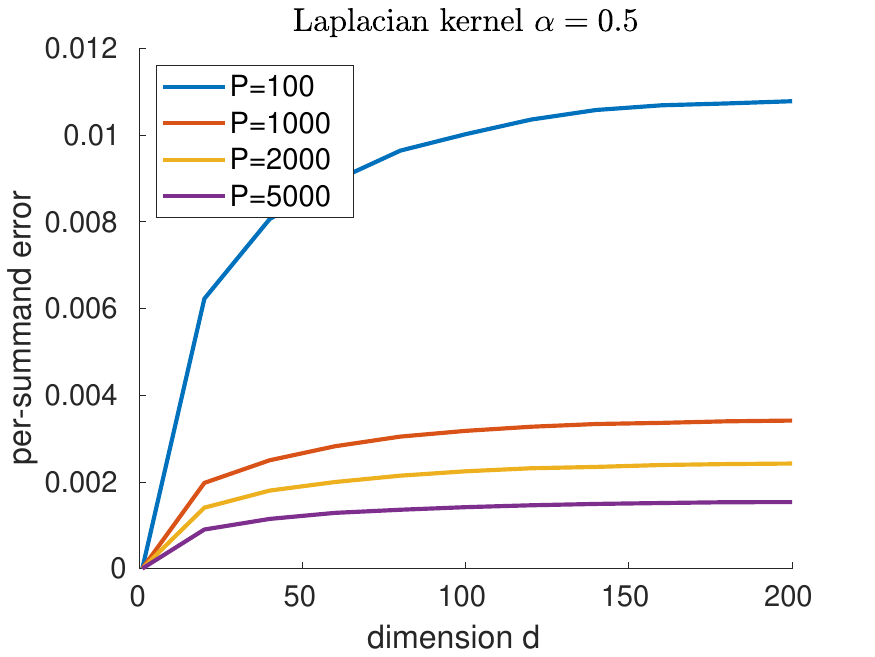}
\caption*{\scriptsize Error dependence on $d$ for $N=10000$ and several values of $P$.}
\end{subfigure}
\hspace{.2cm}
\begin{subfigure}{.3\textwidth}
\includegraphics[width=\textwidth]{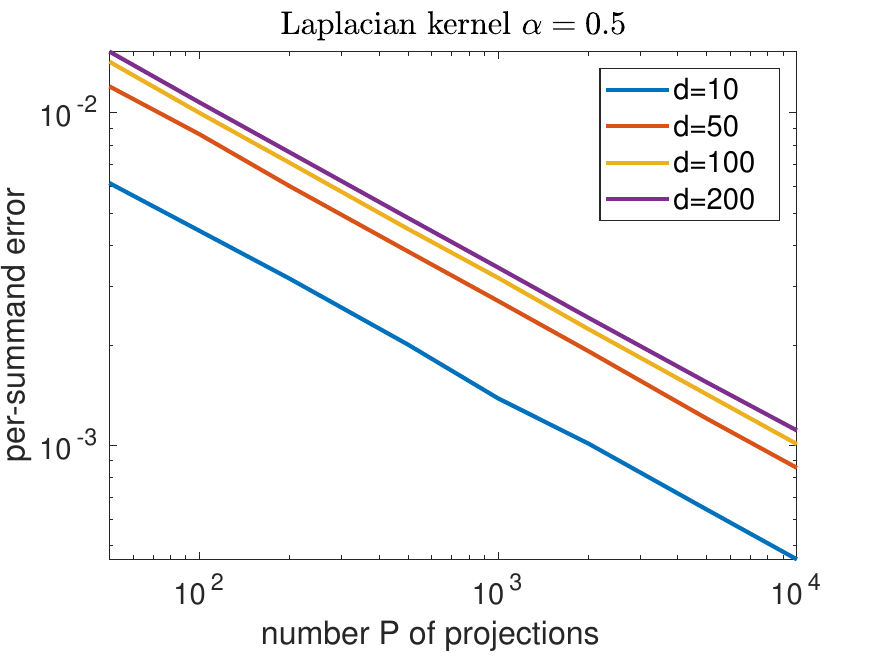}
\caption*{\scriptsize Error dependence on $P$ for $N=10000$ and several values of $d$.}
\end{subfigure}
\caption{Dependence of run time and the per-summand error of the fast kernel summation on the number $N$ of samples, the number $P$ of projections and the dimension $d$ for different kinds of kernels. From top to bottom: Gaussian kernel, Mat\'ern kernel, negative distance kernel, Laplacian kernel.}
\label{fig:comparison_fast_summation}
\end{figure}

\subsection{Results}
The code of this experiment is written in PyTorch and runs on a single NVIDIA RTX 4090 GPU with 24 GB memory.
We visualize the results in Figure~\ref{fig:comparison_fast_summation} as follows:
The plot on the left shows the dependence of the run time on $N$ for the naive implementation and the fast kernel summation. The results verify the quadratic dependence on $N$ for the naive implementation and the linear dependence on $N$ for the proposed fast kernel summation. In particular, we obtain a significant speed-up for large $N$.
For this plot, we fix the dimension to $d=50$. However, the naive and fast implementations depend both linearly on the dimension. Therefore, the conclusions are exactly the same for other choices of $d$.
In the middle, we report the dependence of the error on the dimension $d$ of the problem. We can see, that for the Gaussian, Mat\'ern and Laplacian the error saturates at a certain dimension. In particular, this verifies the dimension-independent error bound from Theorem~\ref{cor:error_bound}.
For the negative distance kernel we can see the square-root dependence on the dimension which again verifies Theorem~\ref{cor:error_bound} numerically.
On the right, we visualize the dependence of the error on the number $P$ of projections in a log-log plot. 
The results verify the asymptotic behaviour of $O(P^{-1/2})$ from Theorem~\ref{cor:error_bound} and Proposition~\ref{thm:accuracy_P}.

\subsection{Comparison to Random Fourier Features}\label{sec:cmp_RFF}
Finally, we compare our fast summation method with random Fourier features (RFF) proposed in \cite{RR2007} and further analyzed, e.g., in \cite{B2017,HSSTTW2021,LTOS2021}. To this end, we give a short summary on RFF, outline the differences to the proposed method and perform a numerical comparison for the Gaussian and the Laplacian kernel.

\paragraph{Fast Summation by Random Fourier Features}
Let $K\colon\R^d\times\R^d\to\R$ be given by $K(x,y)=\Psi(x-y)$ be a bounded shift-invariant positive definite kernel with $K(x,x)=\Psi(0)=1$. 
Then it holds by Bochners' theorem \cite{B1933} that there exists a probability measure $\mu$ on $\R^d$ such that 
\begin{equation*}
\Psi(x-y)=\hat\mu(x-y)=\E_{v\sim\mu}[\exp(2\pi i \langle x-y, v\rangle)]=\E_{v\sim\mu}[\exp(2\pi i \langle x,v\rangle)\exp(-2\pi i \langle y, v \rangle)].
\end{equation*}
By taking the real part on both sides we arrive at
\begin{equation*}
\Psi(x-y)=\E_{v\sim\mu}[\cos(\langle x, v \rangle)\cos(\langle y, v \rangle)+\sin(\langle x, v \rangle)\sin(\langle y, v \rangle)]
\end{equation*}
In many applications this is reformulated by some trigonometric calculations computations as
\begin{equation*}
\Psi(x-y)=2\E_{b\sim\mathcal U_{[0,2\pi)}}\E_{v\sim\mu}[\cos(\langle x, v \rangle+b)\cos(\langle y, v \rangle+b)],
\end{equation*}
see \cite{RR2007} for details.
Now, the basic idea of RFF is to subsample the expectations. More precisely let $v_1,...,v_D$ be iid samples from $\mu$ and $b_1,...,b_D$ be iid samples uniformly drawn from $[0,2\pi)$. Then it holds
\begin{equation*}
K(x,y)=\Psi(x-y)\approx 2\sum_{p=1}^D\cos(\langle x, v_p \rangle+b_p)\cos(\langle y, v_p \rangle+b_p)
\end{equation*}
or
\begin{equation*}
K(x,y)=\Psi(x-y)\approx \sum_{p=1}^D\cos(\langle x, v_p \rangle)\cos(\langle y, v_p \rangle)+\sin(\langle x, v_p \rangle)\sin(\langle y, v_p \rangle)
\end{equation*}
Consequently, we can approximate the Fourier sums $s_m$ as
\begin{equation}\label{eq:mit_b}
s_m=\sum_{n=1}^N w_nK(x_n,y_m)\approx 2\sum_{p=1}^D \cos(\langle y_m, v_p \rangle+b_p)\sum_{n=1}^N w_n\cos(\langle x_n, v_p \rangle+b_p)
\end{equation}
or
\begin{equation}\label{eq:ohne_b}
\begin{aligned}
s_m=\sum_{n=1}^N w_nK(x_n,y_m)&\approx \sum_{p=1}^D \cos(\langle y_m, v_p \rangle)\sum_{n=1}^N w_n\cos(\langle x_n, v_p \rangle)\\&\quad+\sum_{p=1}^D\sin(\langle y_m, v_p \rangle)\sum_{n=1}^N w_n\sin(\langle x_n, v_p \rangle)
\end{aligned}
\end{equation}
which can be computed in $O(Dd(N+M))$.
The measure $\mu$ is given by a standard normal distribution for the Gaussian kernel, by a multivariate Cauchy distribution for the Laplacian kernel and a multivariate Student-t distribution for the Mat\'ern kernel, see e.g.~\cite[(4.15)]{R2006}.
From a theoretical side, we see the following two main advantages of the slicing approach over RFF.
\begin{itemize}
\item[-] \textbf{RFF are limited to positive definite kernels}. In particular, RFF are not applicable for kernels which are only conditionally positive definite since Bochners' theorem does not hold true in this case. Such kernels include the thin plate spline kernel $K(x,y)=\|x-y\|^2\log(\|x-y\|)$ \cite{D1975} and the negative distance kernel $K(x,y)=-\|x-y\|$. Especially the latter one is heavily used in practice since the corresponding MMD leads to the energy distance \cite{GBRSS2006,szekely2002}.
Applications include halftoning of images \cite{TSGSW2011}, generative modelling via MMD flows \cite{HWAH2023} and posterior sampling in imaging inverse problems \cite{HHABCS2023}.
In particular, for the negative distance kernel, we have seen in the previous subsection that the slicing approach is very fast.
On the opposite side, RFF can handle non-radial shift-invariant kernels like the $L^1$-Laplacian kernel $K(x,y)=\exp(-\|x-y\|_1)$ which we have not studied in the context of the slicing approach.
\item[-] The approximated kernel by RFFs \textbf{is always smooth} even when the original kernel was non-smooth. Empirically, this leads to poor approximation qualities for non-smooth kernels including the Laplacian, see also \cite{CJWW2023} for some negative theoretical results in this direction.
In contrast, the sliced approximation of non-smooth kernels can be non-smooth as well. Indeed, we will see in the numerical part that slicing significantly outperforms RFFs for the Laplacian kernel.
\end{itemize}

\paragraph{Numerical Comparison}

\begin{figure}
\centering
\begin{subfigure}{.3\textwidth}
\includegraphics[width=\textwidth]{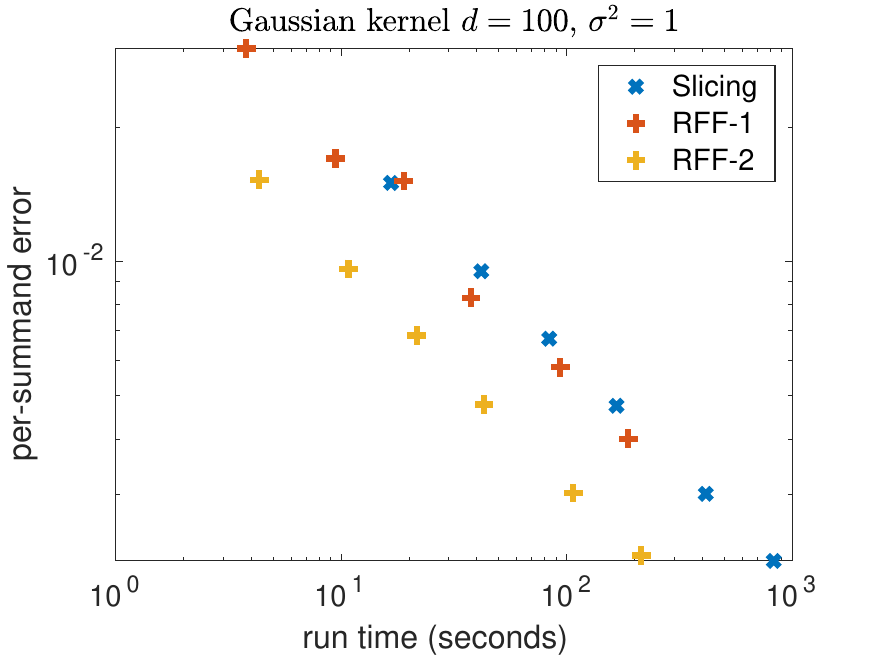}
\end{subfigure}
\hspace{.2cm}
\begin{subfigure}{.3\textwidth}
\includegraphics[width=\textwidth]{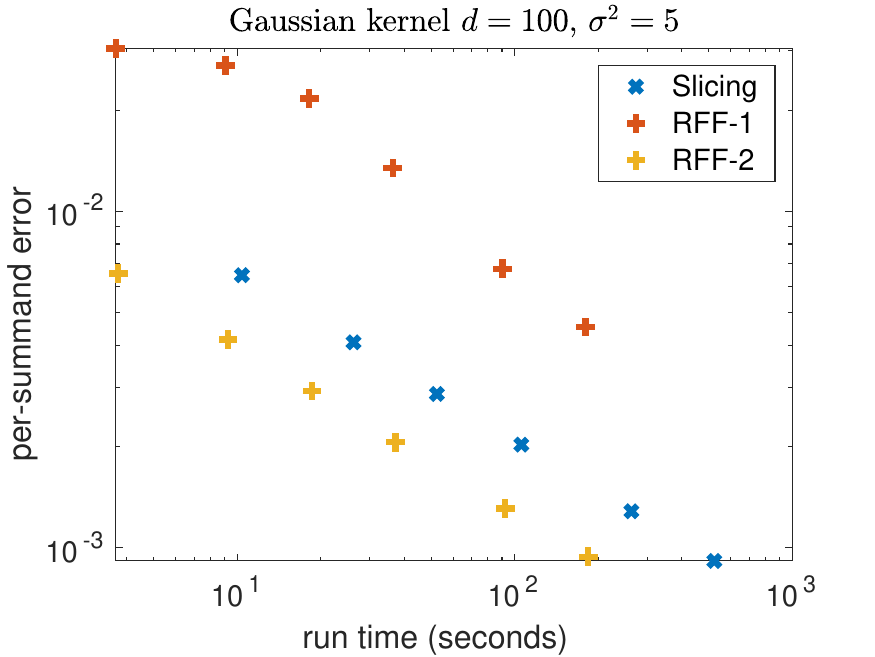}
\end{subfigure}
\hspace{.2cm}
\begin{subfigure}{.3\textwidth}
\includegraphics[width=\textwidth]{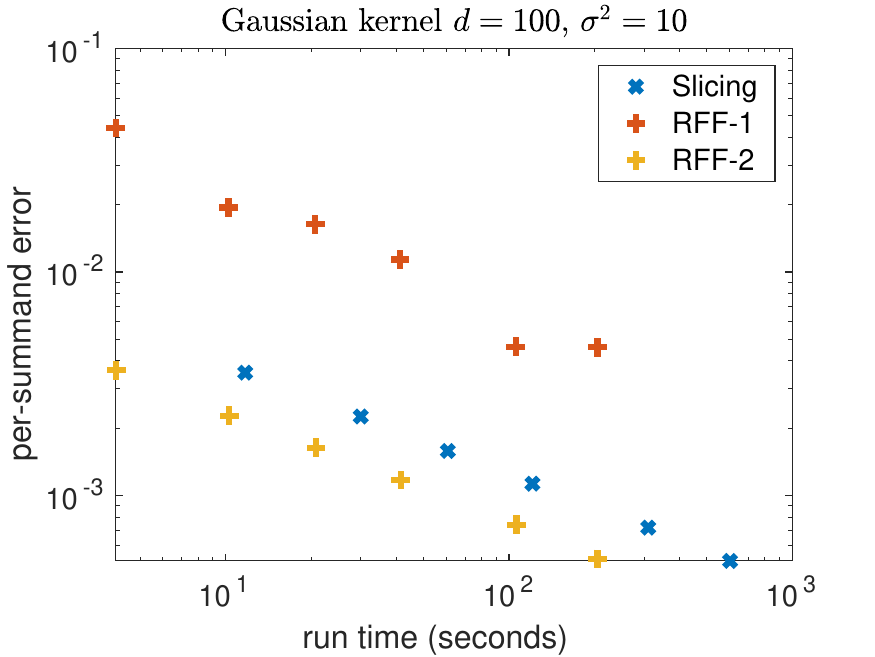}
\end{subfigure}

\begin{subfigure}{.3\textwidth}
\includegraphics[width=\textwidth]{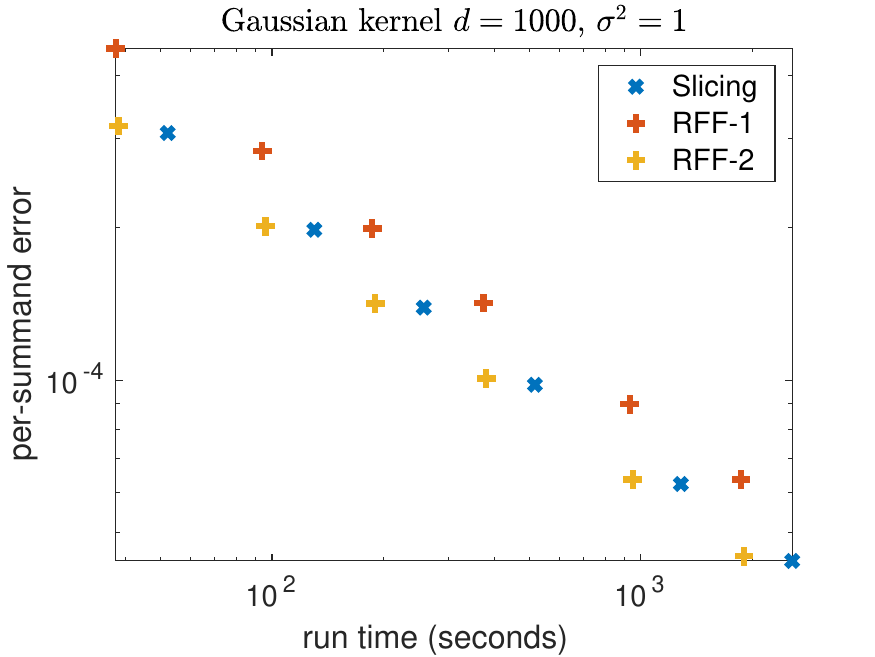}
\end{subfigure}
\hspace{.2cm}
\begin{subfigure}{.3\textwidth}
\includegraphics[width=\textwidth]{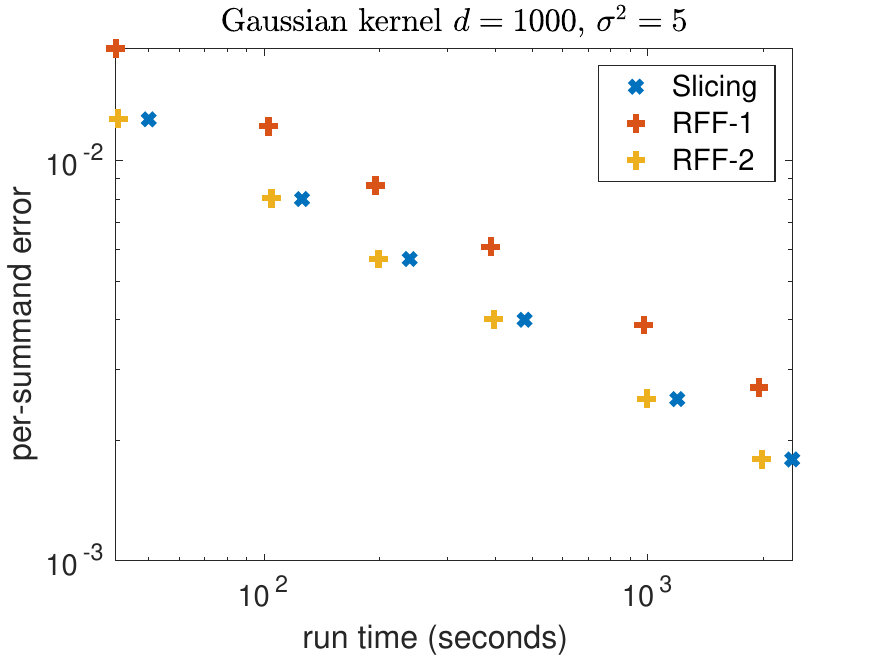}
\end{subfigure}
\hspace{.2cm}
\begin{subfigure}{.3\textwidth}
\includegraphics[width=\textwidth]{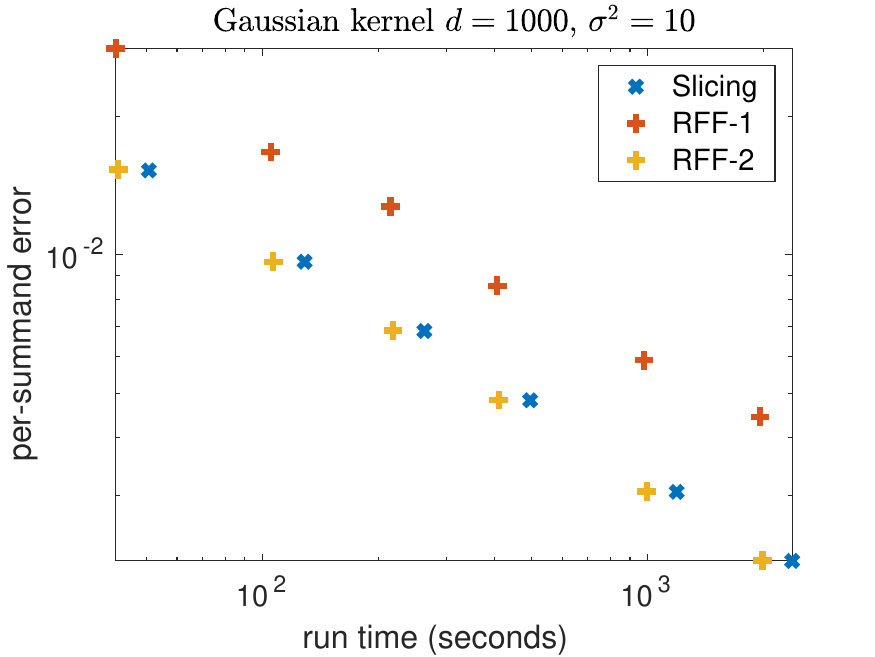}
\end{subfigure}

\begin{subfigure}{.3\textwidth}
\includegraphics[width=\textwidth]{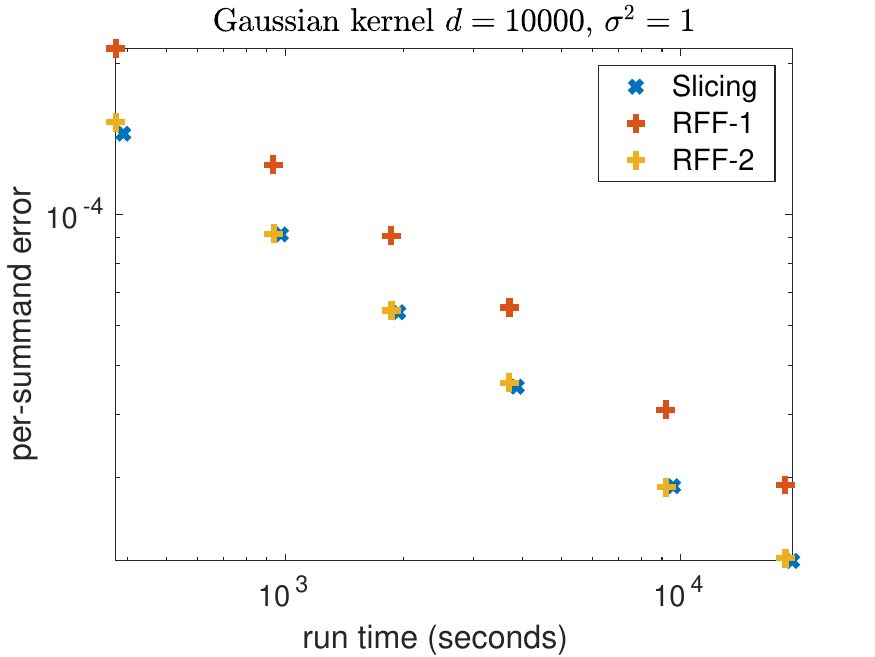}
\end{subfigure}
\hspace{.2cm}
\begin{subfigure}{.3\textwidth}
\includegraphics[width=\textwidth]{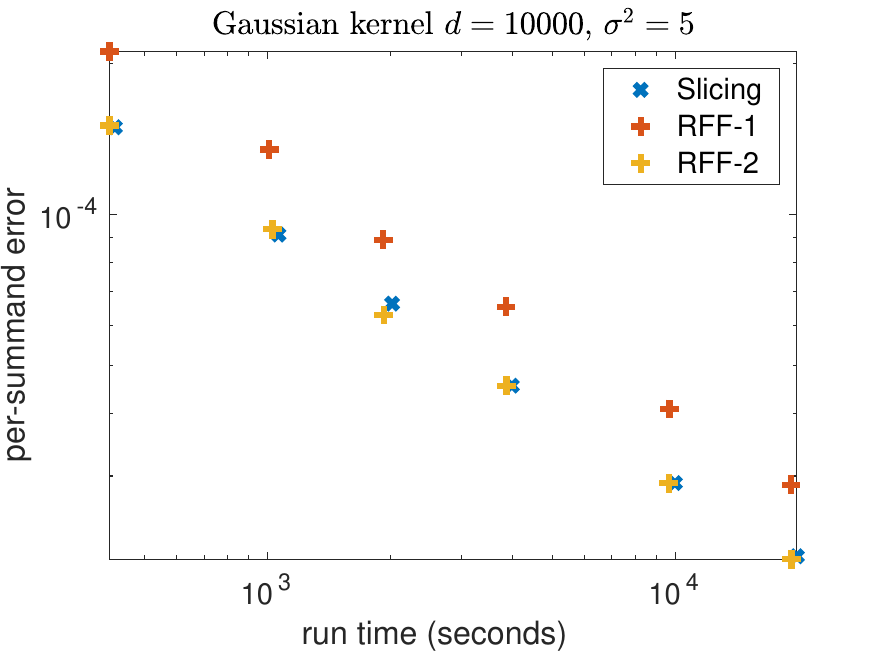}
\end{subfigure}
\hspace{.2cm}
\begin{subfigure}{.3\textwidth}
\includegraphics[width=\textwidth]{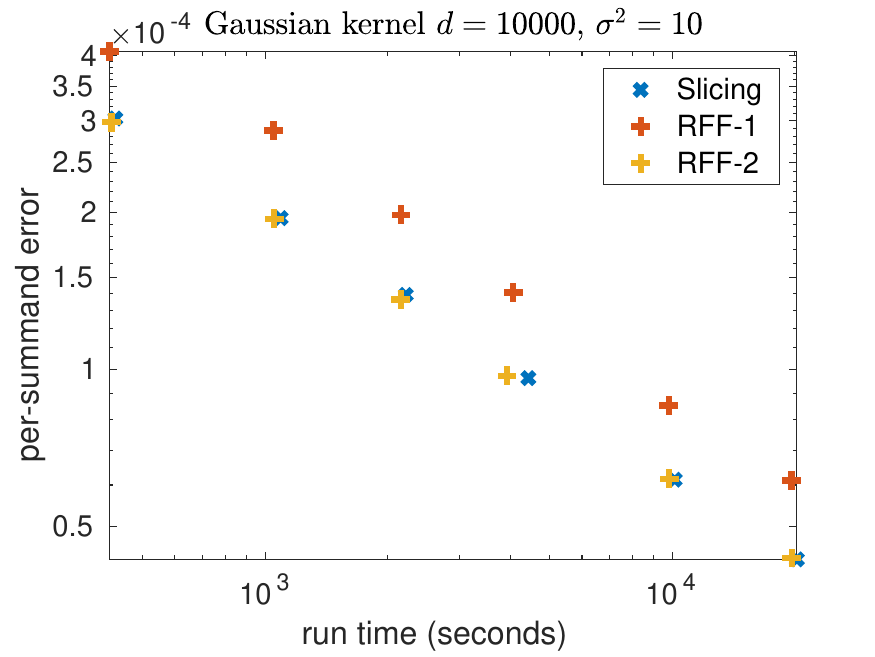}
\end{subfigure}

\begin{subfigure}{.3\textwidth}
\includegraphics[width=\textwidth]{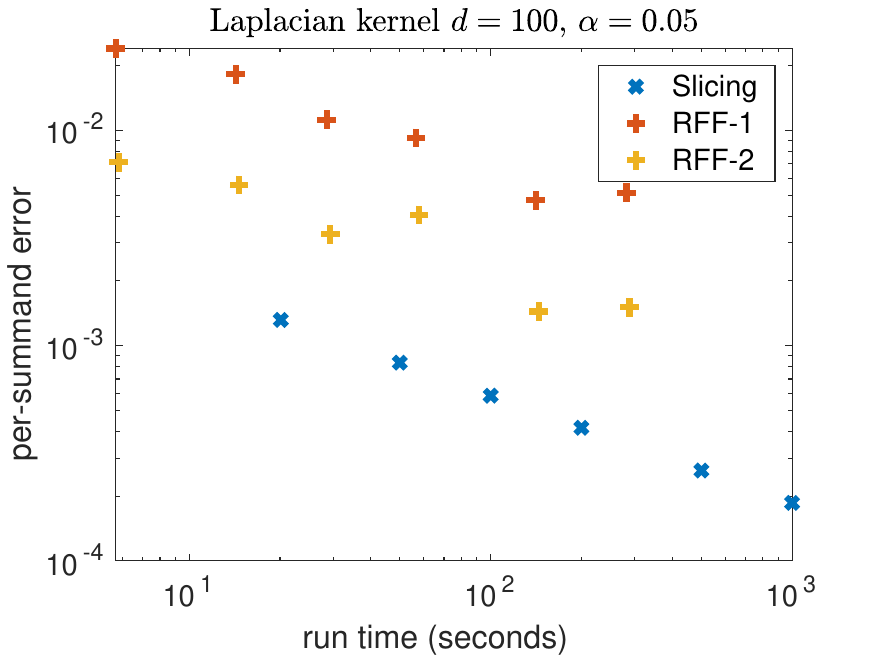}
\end{subfigure}
\hspace{.2cm}
\begin{subfigure}{.3\textwidth}
\includegraphics[width=\textwidth]{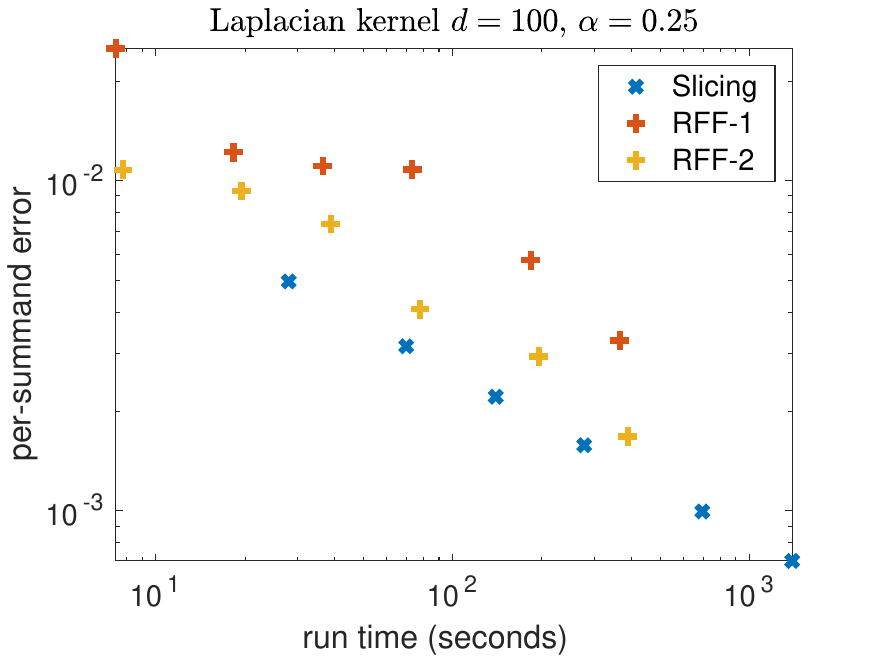}
\end{subfigure}
\hspace{.2cm}
\begin{subfigure}{.3\textwidth}
\includegraphics[width=\textwidth]{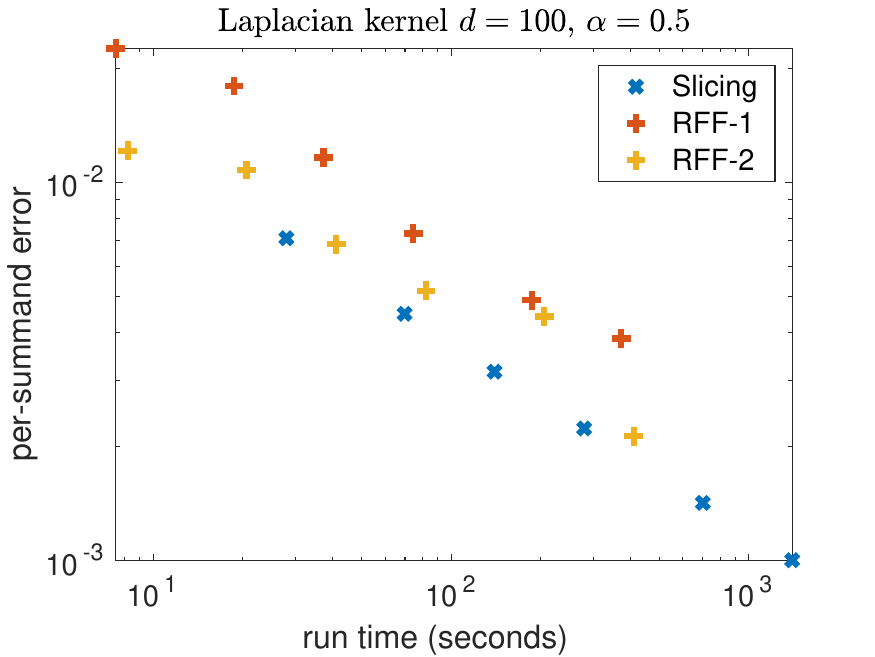}
\end{subfigure}

\begin{subfigure}{.3\textwidth}
\includegraphics[width=\textwidth]{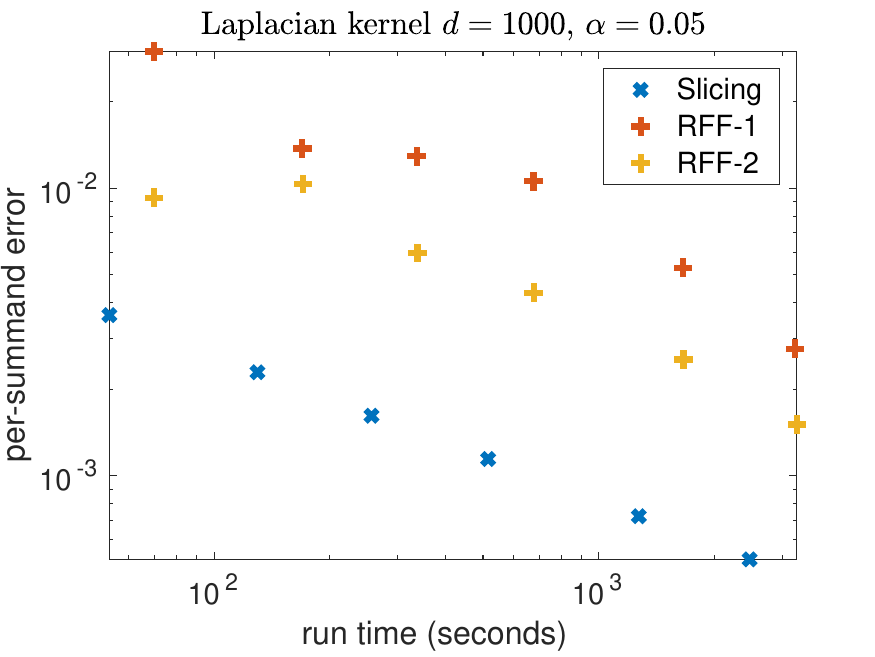}
\end{subfigure}
\hspace{.2cm}
\begin{subfigure}{.3\textwidth}
\includegraphics[width=\textwidth]{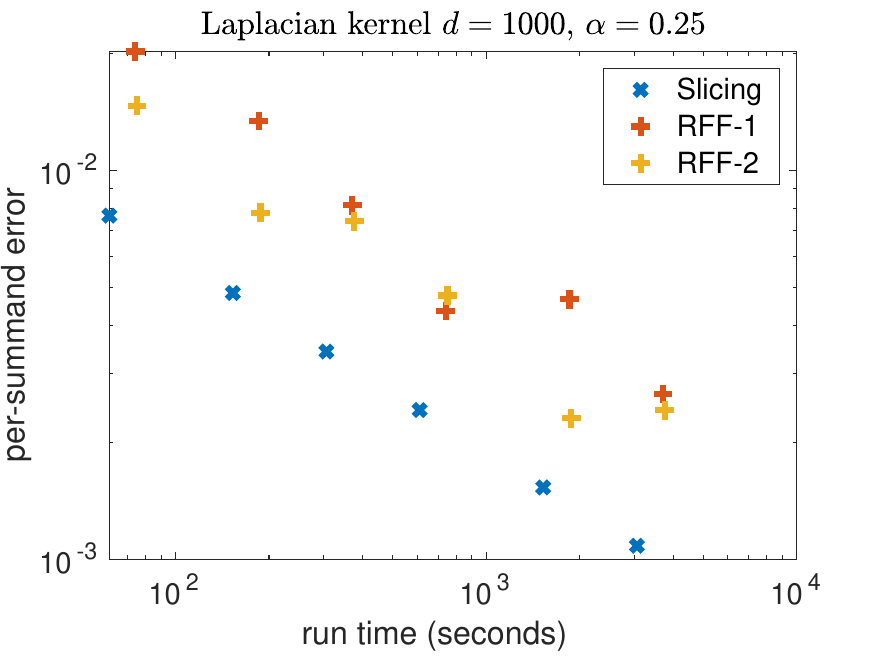}
\end{subfigure}
\hspace{.2cm}
\begin{subfigure}{.3\textwidth}
\includegraphics[width=\textwidth]{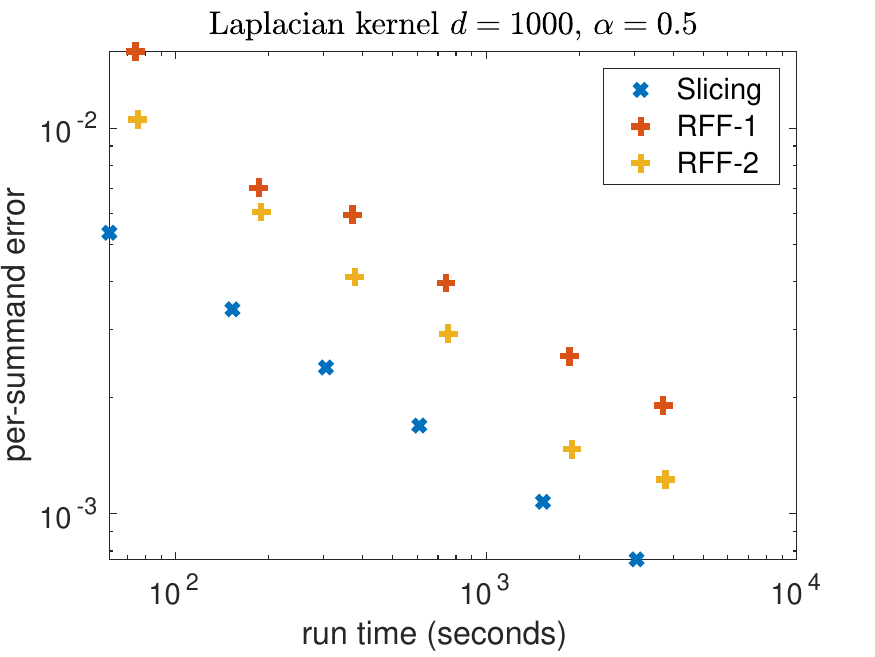}
\end{subfigure}

\begin{subfigure}{.3\textwidth}
\includegraphics[width=\textwidth]{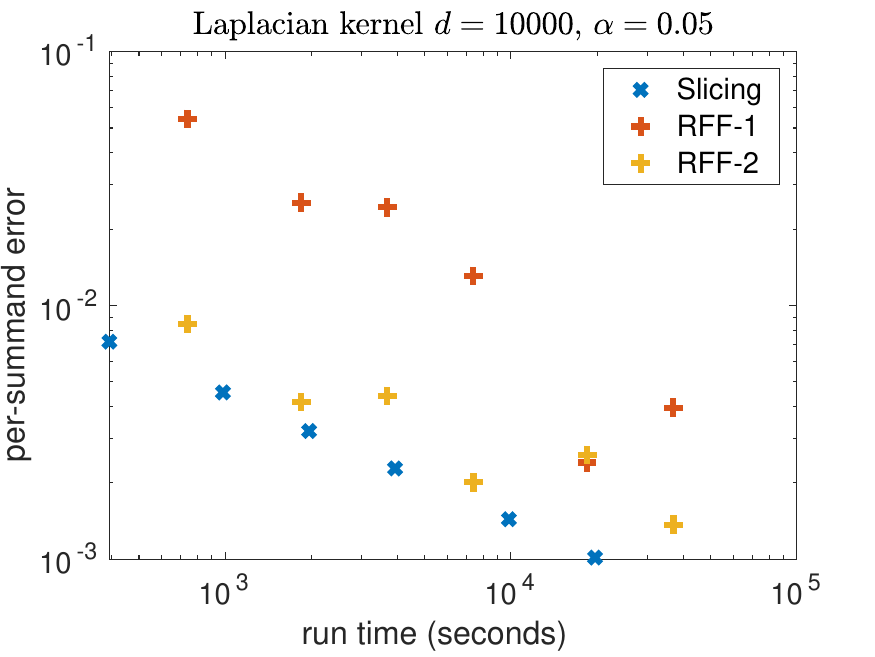}
\end{subfigure}
\hspace{.2cm}
\begin{subfigure}{.3\textwidth}
\includegraphics[width=\textwidth]{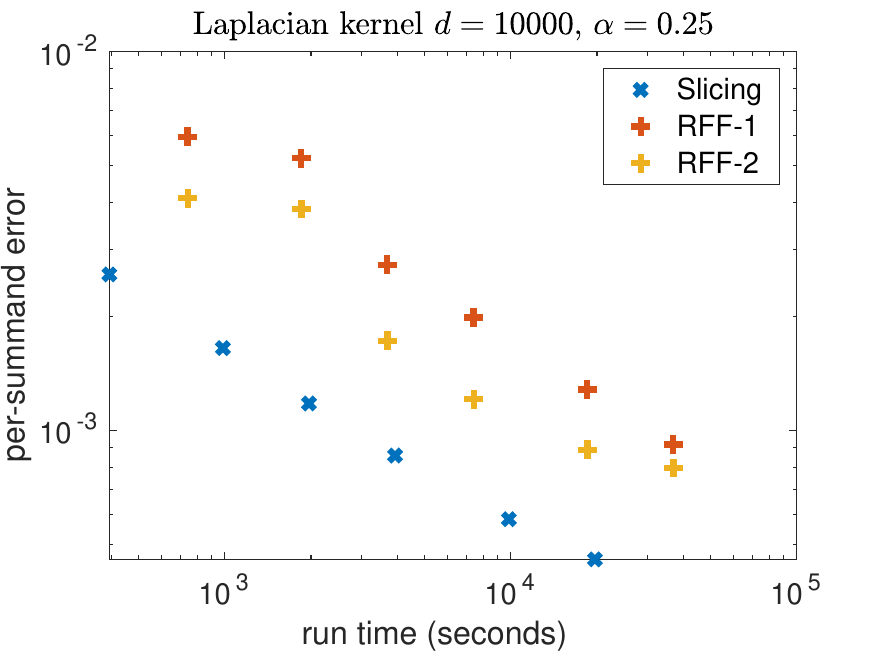}
\end{subfigure}
\hspace{.2cm}
\begin{subfigure}{.3\textwidth}
\includegraphics[width=\textwidth]{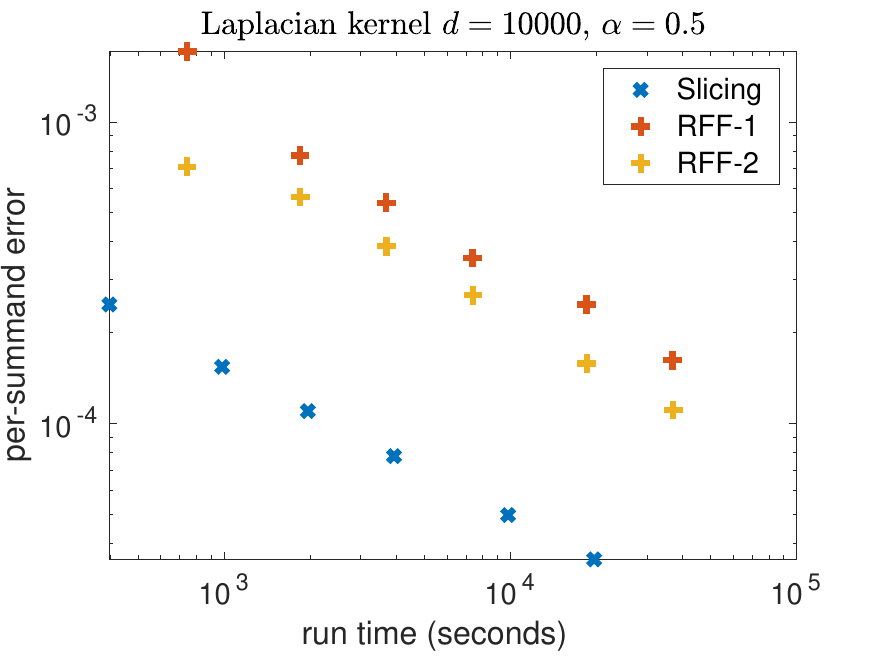}
\end{subfigure}
\caption{Comparison of the error versus run time for Slicing and RFF for different dimensions and kernel parameters. Top three rows: Gaussian kernel, bottom three rows: Laplacian kernel.}
\label{fig:cmp_RFF}
\end{figure}

\begin{table}
\centering
Gaussian kernel, $\sigma^2=5$, $d=1000$

\scalebox{.54}{
\begin{tabular}{c|cccccc}
Error&$P=D=200$&$P=D=500$&$P=D=1000$&$P=D=2000$&$P=D=5000$&$P=D=10000$\\\hline
Slicing&$1.27\cdot10^{-2}\pm3.83\cdot10^{-5}$&$8.02\cdot10^{-3}\pm2.16\cdot10^{-5}$&$5.68\cdot10^{-3}\pm9.49\cdot10^{-6}$&$4.00\cdot10^{-3}\pm5.18\cdot10^{-6}$&$2.54\cdot10^{-3}\pm5.55\cdot10^{-6}$&$1.79\cdot10^{-3}\pm5.72\cdot10^{-6}$\\
RFF-1&$1.91\cdot10^{-2}\pm7.04\cdot10^{-4}$&$1.22\cdot10^{-2}\pm2.87\cdot10^{-4}$&$8.66\cdot10^{-3}\pm3.49\cdot10^{-4}$&$6.09\cdot10^{-3}\pm2.90\cdot10^{-4}$&$3.89\cdot10^{-3}\pm2.06\cdot10^{-4}$&$2.71\cdot10^{-3}\pm7.30\cdot10^{-5}$\\
RFF-2&$1.27\cdot10^{-2}\pm6.06\cdot10^{-5}$&$8.05\cdot10^{-3}\pm3.11\cdot10^{-5}$&$5.68\cdot10^{-3}\pm1.64\cdot10^{-5}$&$4.01\cdot10^{-3}\pm8.21\cdot10^{-6}$&$2.54\cdot10^{-3}\pm8.00\cdot10^{-6}$&$1.80\cdot10^{-3}\pm5.20\cdot10^{-6}$
\end{tabular}}
\scalebox{.54}{
\begin{tabular}{c|cccccc}
Run time&$P=D=200$&$P=D=500$&$P=D=1000$&$P=D=2000$&$P=D=5000$&$P=D=10000$\\\hline
Slicing&$4.99\cdot10^{+1}\pm2.63\cdot10^{-1}$&$1.25\cdot10^{+2}\pm1.48\cdot10^{+0}$&$2.39\cdot10^{+2}\pm2.32\cdot10^{+0}$&$4.77\cdot10^{+2}\pm5.28\cdot10^{-2}$&$1.19\cdot10^{+3}\pm6.35\cdot10^{+0}$&$2.37\cdot10^{+3}\pm6.38\cdot10^{+0}$\\
RFF-1&$4.10\cdot10^{+1}\pm9.31\cdot10^{-2}$&$1.03\cdot10^{+2}\pm1.33\cdot10^{+0}$&$1.95\cdot10^{+2}\pm3.85\cdot10^{-1}$&$3.90\cdot10^{+2}\pm4.41\cdot10^{-2}$&$9.76\cdot10^{+2}\pm3.62\cdot10^{+0}$&$1.95\cdot10^{+3}\pm6.05\cdot10^{-1}$\\
RFF-2&$4.16\cdot10^{+1}\pm1.14\cdot10^{-1}$&$1.04\cdot10^{+2}\pm1.22\cdot10^{+0}$&$1.98\cdot10^{+2}\pm9.65\cdot10^{-2}$&$3.96\cdot10^{+2}\pm8.16\cdot10^{-2}$&$9.93\cdot10^{+2}\pm6.11\cdot10^{+0}$&$1.98\cdot10^{+3}\pm4.60\cdot10^{-1}$
\end{tabular}}
\vspace{.5cm}

Laplacian kernel, $\alpha=0.25$, $d=1000$

\scalebox{.54}{
\begin{tabular}{c|cccccc}
Error&$P=D=200$&$P=D=500$&$P=D=1000$&$P=D=2000$&$P=D=5000$&$P=D=10000$\\\hline
Slicing&$7.66\cdot10^{-3}\pm2.04\cdot10^{-5}$&$4.84\cdot10^{-3}\pm1.56\cdot10^{-5}$&$3.42\cdot10^{-3}\pm8.01\cdot10^{-6}$&$2.42\cdot10^{-3}\pm6.99\cdot10^{-6}$&$1.53\cdot10^{-3}\pm2.85\cdot10^{-6}$&$1.08\cdot10^{-3}\pm2.93\cdot10^{-6}$\\
RFF-1&$2.02\cdot10^{-2}\pm1.40\cdot10^{-2}$&$1.34\cdot10^{-2}\pm4.98\cdot10^{-3}$&$8.15\cdot10^{-3}\pm2.89\cdot10^{-3}$&$4.35\cdot10^{-3}\pm1.11\cdot10^{-3}$&$4.67\cdot10^{-3}\pm2.31\cdot10^{-3}$&$2.66\cdot10^{-3}\pm9.99\cdot10^{-4}$\\
RFF-2&$1.47\cdot10^{-2}\pm6.06\cdot10^{-5}$&$7.79\cdot10^{-3}\pm3.11\cdot10^{-5}$&$7.41\cdot10^{-3}\pm1.64\cdot10^{-5}$&$4.77\cdot10^{-3}\pm8.21\cdot10^{-6}$&$2.31\cdot10^{-3}\pm8.00\cdot10^{-6}$&$2.41\cdot10^{-3}\pm5.20\cdot10^{-6}$
\end{tabular}}
\scalebox{.54}{
\begin{tabular}{c|cccccc}
Run time&$P=D=200$&$P=D=500$&$P=D=1000$&$P=D=2000$&$P=D=5000$&$P=D=10000$\\\hline
Slicing&$6.12\cdot10^{+1}\pm3.66\cdot10^{-1}$&$1.53\cdot10^{+2}\pm5.87\cdot10^{-2}$&$3.06\cdot10^{+2}\pm9.80\cdot10^{-2}$&$6.11\cdot10^{+2}\pm6.62\cdot10^{-1}$&$1.53\cdot10^{+3}\pm4.95\cdot10^{-1}$&$3.06\cdot10^{+3}\pm4.09\cdot10^{+0}$\\
RFF-1&$7.41\cdot10^{+1}\pm4.83\cdot10^{-2}$&$1.85\cdot10^{+2}\pm2.47\cdot10^{-2}$&$3.71\cdot10^{+2}\pm4.08\cdot10^{-2}$&$7.41\cdot10^{+2}\pm2.48\cdot10^{-1}$&$1.85\cdot10^{+3}\pm4.20\cdot10^{-1}$&$3.71\cdot10^{+3}\pm2.05\cdot10^{+0}$\\
RFF-2&$7.52\cdot10^{+1}\pm2.88\cdot10^{-2}$&$1.88\cdot10^{+2}\pm6.91\cdot10^{-2}$&$3.76\cdot10^{+2}\pm1.02\cdot10^{-1}$&$7.52\cdot10^{+2}\pm2.08\cdot10^{-1}$&$1.88\cdot10^{+3}\pm2.60\cdot10^{-1}$&$3.76\cdot10^{+3}\pm1.65\cdot10^{+0}$
\end{tabular}}
\caption{Comparison of the error and run time for slicing and RFF for $d=1000$. For both the Gaussian and the Laplacian kernel, the upper table reports the errors and the bottom table reports the run times including standard deviations.}
\label{tab:RFF_vs_slicing}
\end{table}

Now we compare the performance of our slicing approach with RFF numerically. 
The code for this example is written in Julia and runs on a single-threaded CPU for reducing the dependence on the implementation details.
We denote the RFF approximation in \eqref{eq:mit_b} by RFF-1 and the RFF approximation in \eqref{eq:ohne_b} with RFF-2.
We use the same data generation process as before with $N=100000$ data points of dimension $d\in\{100,1000,10000\}$ and use $P\in\{200,500,1000,2000,5000,10000\}$ projections. In order to obtain a similar runtime, we choose $D=P$ Fourier features for the Gaussian and $D=2P$ for the Laplacian kernel.
In Figure~\ref{fig:cmp_RFF} we plot the runtime versus the error for the different choices of $d$  and different kernel parameters $\alpha\in\{0.05,0.25,0.5\}$ and $\sigma^2\in\{1,5,10\}$.
In addition, we report the precise values including standard deviations for $d=1000$, $\sigma^2=5$ and $\alpha=0.25$ in Table~\ref{tab:RFF_vs_slicing}.
Similarly as in \cite{SS2015}, we observe that the second variant (RFF-2) given by \eqref{eq:ohne_b} of RFF performs significantly better than the first variant (RFF-1) given by \eqref{eq:mit_b}.
For the Gaussian kernel, we can see that for $d=100$ and $d=1000$ RFF-1 performs slightly better than slicing. For $d=10000$ the performance of RFF-1 and slicing is similar.
For the Laplacian kernel we can see that slicing always performs better than both variants of RFF independent of the choice of $d$ or the kernel parameter.
In Table~\ref{tab:RFF_vs_slicing}, we can see that the errors of RFF-1 have a significantly higher standard deviation than the ones of RFF-2 and slicing. The standard deviation of the run time is negligible.

\subsection{Large Scale GPU Comparison}\label{sec:cmp_keops}

\begin{table}
\centering
Gaussian kernel, $\sigma^2=1$\vspace{.1cm}

\begin{tabular}{c|cccc}
$N$&$10^4$&$10^5$&$10^6$&$10^7$\\\hline
PyKeOps&$0.0145$ s&$0.289$ s&$22.4$ s&$2197$ s\\
Slicing $P=100$&$0.0065$ s&$0.014$ s&$0.279$ s& $2.72$ s\\
Slicing $P=1000$&$0.0493$ s&$0.144$ s&$2.80$ s& $25.4$ s\\
Slicing $P=2000$&$0.0953$ s&$0.283$ s&$5.70$ s& $50.9$ s\\
Slicing $P=5000$&$0.2365$ s&$0.713$ s&$13.1$ s&$127.2$ s
\end{tabular}\vspace{.5cm}

Negative distance kernel\vspace{.2cm}

\begin{tabular}{c|cccc}
$N$&$10^4$&$10^5$&$10^6$&$10^7$\\\hline
PyKeOps&$0.0137$ s&$0.286$ s&$22.4$ s&$2196$ s\\
Slicing $P=100$&$0.0021$ s&$0.009$ s&$0.070$ s& $1.51$ s\\
Slicing $P=1000$&$0.0092$ s&$0.093$ s&$0.686$ s& $15.1$ s\\
Slicing $P=2000$&$0.0192$ s&$0.183$ s&$1.37$ s& $30.17$ s\\
Slicing $P=5000$&$0.0455$ s&$0.456$ s&$3.42$ s&$75.58$ s
\end{tabular}
\caption{Run time comparison of PyKeOps \cite{CFGCD2021} with slicing for the Gaussian (top) and negative distance kernel (bottom).}
\label{tab:runtimes_keops}
\end{table}

Finally, we compare the run time of slicing with the brute-force kernel summation implemented by the KeOps library \cite{CFGCD2021}\footnote{We use the Python wrapper PyKeOps.} for a large number of data points. 
KeOps generally evaluates the full kernel sum such that the kernel summation has complexity $O(N^2)$.
However, it uses symbolic tensors and avoids memory transfers to speed up the computations massively.
We want to emphasize that from an implementations viewpoint this is not a fair comparison. In particular:
\begin{itemize}
\item[-] KeOps consists out of compiled code which natively written in C++/Cuda. In contrast, we use a high-level implementation in PyTorch such that the run time heavily depends on vectorization and the use of built-in functions.
\item[-] Similar techniques as used in the implementation of KeOps might be applicable to accelerate the slicing approach (or some subroutines of it) as well. 
\end{itemize}
However, discussing and providing high-performance GPU-optimized implementations is not within the scope of this paper.

This example is implemented with PyTorch and we run it on a single NVIDIA RTX4090 GPU with 24 GB memory.
We use the same data generation process as before with $N\in\{10^4,10^5,10^6,10^7\}$
data points of dimension $d=100$ and use $P\in\{100,1000,2000,5000\}$ projections.
As kernel we choose the Gaussian kernel with $\sigma^2=1$ and the negative distance kernel.
The resulting run times are given in Table~\ref{tab:runtimes_keops}.
Even though PyKeOps shows a competitive run time for up to $10^5$ data points, it of course cannot escape the quadratic complexity. Consequently for $N=10^6$ or larger, slicing is significantly faster than computing the naive kernel sum via PyKeOps.

\begin{remark}[Memory Requirements and Batching]
If all one-dimensional kernel summations are computed in parallel, the asymptotic memory requirements of the slicing algorithm are $O(Nd)$ for storing the data set plus $O(NP)$ for storing all projected data sets. If $N$ becomes large and $P>d$, this can be a limitation.
To overcome these memory requirements, we only consider a batch of $B<P$ projections at the same time and iterate this procedure until we have computed all projections.
This reduces the memory requirements from $O(N(d+P))$ to $O(N(d+B))$. 
However, it also leads to a poor parallelization of the algorithm whenever $B$ is too small.
We can see this effect for the negative distance kernel in Table~\ref{tab:runtimes_keops}, where we use a batch size of $100$ for $N=10^6$ and a batch size of $10$ for $N=10^7$. While the slicing algorithm has theoretically computational complexity of $O(N\log(N))$, the factor between the run times of $N=10^6$ and $N=10^7$ is worse than a factor of $10$.
If we reduce the batch size also for smaller values of $N$, we retain again the approximate factor of $10$ between the run times for $N=10^6$ and $N=10^7$.
\end{remark}

\section{Conclusions}\label{sec:concl}

We proposed a method to approximate large kernel sums, which appear in all kinds of kernel methods.
Our approach is based on two ideas.
First, we reduce the problem to the one-dimensional case by representing kernels as sliced kernel of a suitable one-dimensional counterpart.
In the arising one-dimensional setting we apply a fast Fourier summation method.
The resulting fast kernel summation method is fast, accurate and simple to implement.
In particular, it reduces the quadratic dependence of the run time on the number of samples to a linear dependence.
Numerical experiments verify the correctness of the claims and demonstrate advantages of slicing for fast kernel summations. In particular, slicing is not limited to positive definite kernels and has therefore a broader applicability than other methods like random Fourier features.

\paragraph{Limitations}
The error estimates from Proposition~\ref{thm:accuracy_P} and Theorem~\ref{cor:error_bound} are given in the terms of absolute errors and numerical computations suggest that such an estimate does not hold true relative to the absolute value of the kernel sum.
Moreover, the error estimates only consider the error in one kernel evaluation and neglects how these errors interact in the kernels sum. Depending on the distribution of the underlying dataset, finer error rates for the whole kernel sum might hold true.
Finally, the fast Fourier summation from Section~\ref{sec:1d} is limited to smooth kernels, see Remark~\ref{rem:smoothness}. For some kernels, we can overcome this problem. This includes the negative distance kernel, where a sorting algorithm can be applied, or the Laplacian kernel, which can be decomposed in a smooth part and a negative distance part, see Section~\ref{sec:energy} and \ref{sec:laplacian} for more details.
Lastly, it is worth noting that certain kernel methods may exhibit constraints in their ability to generalize effectively in very high-dimensions, irrespective of the computation of kernel sums.

\appendix

\section{Proof of Proposition~\ref{prop:sliced_kernels}}\label{proof:sliced_kernels}
The proof of the first part follows similar ideas as \cite[Thm 1]{HWAH2023} and \cite[Prop 2.2]{R2003}.
\\

\noindent
\textbf{Part (i):}
Since $f|_{[0,s]}\in L^{\infty}([0,s])$, we have that $f(s\cdot)|_{[0,1]}\in L^{\infty}([0,1])$.
Moreover, we have that $(1-t^2)^{\frac{d-3}{2}}\in L^1([0,1])$ for $d\geq 2$. Thus H\"olders formula yields that
$F(s)$ is finite for any $s\geq0$.

Let $s\geq0$ and $x\in\R^d$ with $\|x\|=s$. Denote by $U_x$ an orthogonal matrix such that $U_xx=\|x\|e_1$, where $e_1$ is the first unit vector.
Then, it holds
\begin{equation*}
P_\xi(x)=P_\xi(s U_x^\tT e_1)=s\langle \xi,U_x^\tT e_1\rangle=s\langle U_x\xi,e_1\rangle=sP_{U_x\xi}(e_1).
\end{equation*}
In particular, we have that 
\begin{align*}
\E_{\xi\sim \mathcal U_{\Sp^{d-1}}}[f(|P_\xi(x)|)]&=\E_{\xi\sim \mathcal U_{\Sp^{d-1}}}[f(s|P_{U_x\xi}(e_1)|)]
=\E_{\xi\sim \mathcal U_{\Sp^{d-1}}}[f(s|P_{\xi}(e_1)|)]=\E_{\xi\sim \mathcal U_{\Sp^{d-1}}}[f(s|\xi_1|)],
\end{align*}
where we used the substitution of $U_x\xi$ by $\xi$ in the second equality and the identity $P_\xi(e_1)=\langle \xi,e_1\rangle=\xi_1$ in the last one.
Using the notations $|\Sp^{d-1}|=\int_{\Sp^{d-1}} 1 \dx x$ for the volume of $\Sp^{d-1}$ and $\xi=\xi_1 e_1+\sqrt{1-\xi_1^2}(0,\xi_{2:d})$ with $\xi_{2:d}=(\xi_2,...,\xi_d)\in\Sp^{d-2}$ and applying \cite[eqt (1.16)]{AH2012}, we obtain 
\begin{align*}
\E_{\xi\sim \mathcal U_{\Sp^{d-1}}}[f(|P_\xi(x)|)]&=
\frac{1}{|\Sp^{d-1}|}\int_{\Sp^{d-1}}f(s|\xi_1|)\d\Sp^{d-1}(\xi)\\
&=
\frac{1}{|\Sp^{d-1}|}\int_{-1}^1\int_{\Sp^{d-2}}f(s|\xi_1|)\d\Sp^{d-2}(\xi_{2:d})(1-\xi_1^2)^{\frac{d-3}{2}}\d\xi_1\\
&=\frac{|\Sp^{d-2}|}{|\Sp^{d-1}|}\int_{-1}^1f(s|t|)(1-t^2)^{\frac{d-3}{2}}\d t.
\end{align*}
By  the symmetry of the integrand and the volume formula
$
|\Sp^{d-2}|=\frac{2\pi^{{(d-1)}/2}}{\Gamma(\frac{d-1}{2})}
$,
we conclude that
\begin{align*}
\E_{\xi\sim \mathcal U_{\Sp^{d-1}}}[f(|P_\xi(x)|)]=\frac{2\Gamma(\frac{d}{2})}{\sqrt{\pi}\Gamma(\frac{d-1}{2})}\int_0^1f(ts)(1-t^2)^{\frac{d-3}{2}}\d t\eqqcolon F(s).
\end{align*}
In particular, it holds
\begin{align*}
K(x,y)=\E_{\xi\sim \mathcal U_{\Sp^{d-1}}}[f(|P_\xi(x-y)|)]=F(\|x-y\|).
\end{align*}

\noindent
\textbf{Part (ii):} 
Using the substitution $r=ts$, we get
\begin{equation*}
F(s)=\frac{2\Gamma(\frac{d}{2})}{s\sqrt{\pi}\Gamma(\frac{d-1}{2})}\int_0^s f(r)\Big(1-\frac{r^2}{s^2}\Big)^{\frac{d-3}{2}}\d r
\end{equation*}
By the differentiability of $1/s$ on $(0,\infty)$ it remains to show the differentiability of $\int_0^s f(r)\Big(1-\frac{r^2}{s^2}\Big)^{\frac{d-3}{2}}\d r$. Since $d>3$, it holds for $r=1$ that $f(r)\Big(1-\frac{r^2}{s^2}\Big)^{\frac{d-3}{2}}=0$. Moreover, $\Big(1-\frac{r^2}{s^2}\Big)^{\frac{d-3}{2}}$ is differentiable with respect to $s$ with integrable derivative. Thus, Leibniz integral rule yields that $\int_0^s f(r)\Big(1-\frac{r^2}{s^2}\Big)^{\frac{d-3}{2}}\d r$ is differentiable with derivative
\begin{equation*}
\int_0^sf(r)\frac{\d}{\d s}\Big(1-\frac{r^2}{s^2}\Big)^{\frac{d-3}{2}} \d r.
\end{equation*}
This proves the claim.\hfill$\Box$

\section{Proof of Theorem~\ref{thm:kernels_to_sliced_analytic}}\label{proof:kernels_to_sliced_analytic}
We need two auxiliary lemmas.

\begin{lemma}\label{lem:power_series}
Let $a_n,b_n\in\R$, $n\in\N$ and $d\in\N$ such that
\begin{equation*}
b_n=\frac{\sqrt{\pi}\Gamma(\frac{n+d}{2})}{\Gamma(\frac{d}{2})\Gamma(\frac{n+1}{2})} a_n.
\end{equation*}
Then the power series $F(x)=\sum_{n=0}^\infty a_n x^n$ and $f(x)=\sum_{n=0}^\infty b_n x^n$ have the same convergence radius.
\end{lemma}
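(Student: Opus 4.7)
The plan is to apply the Cauchy--Hadamard formula and show that the ratio $c_n := b_n/a_n = \frac{\sqrt{\pi}\,\Gamma(\frac{n+d}{2})}{\Gamma(\frac{d}{2})\,\Gamma(\frac{n+1}{2})}$ grows at most polynomially in $n$, so that $c_n^{1/n} \to 1$ and the radii of convergence coincide.

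First, I would invoke Stirling's asymptotic for ratios of Gamma functions, namely $\Gamma(z+\alpha)/\Gamma(z+\beta) = z^{\alpha-\beta}(1 + O(1/z))$ as $z \to \infty$. Applying this with $z = \frac{n+1}{2}$, $\alpha = \frac{d-1}{2}$, $\beta = 0$ gives
\begin{equation*}
\frac{\Gamma(\tfrac{n+d}{2})}{\Gamma(\tfrac{n+1}{2})} = \Big(\tfrac{n+1}{2}\Big)^{\!(d-1)/2}\bigl(1 + O(1/n)\bigr),
\end{equation*}
so that $c_n = C_d \cdot n^{(d-1)/2}(1 + o(1))$ for the constant $C_d = \frac{\sqrt{\pi}}{\Gamma(d/2) \, 2^{(d-1)/2}}$. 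In particular $c_n$ is strictly positive for all large $n$ and satisfies $\lim_{n\to\infty} c_n^{1/n} = 1$, because $n^{(d-1)/(2n)} \to 1$ and $C_d^{1/n} \to 1$.

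Next, by Cauchy--Hadamard the radii of convergence of $F$ and $f$ are
\begin{equation*}
R_F^{-1} = \limsup_{n\to\infty} |a_n|^{1/n}, \qquad R_f^{-1} = \limsup_{n\to\infty} |b_n|^{1/n} = \limsup_{n\to\infty} \bigl(c_n |a_n|\bigr)^{1/n}.
\end{equation*}
Since $c_n > 0$ for large $n$ with $c_n^{1/n} \to 1$, the standard fact that multiplying a sequence by one whose $n$-th root tends to $1$ preserves the $\limsup$ gives $\limsup |b_n|^{1/n} = \limsup |a_n|^{1/n}$, hence $R_F = R_f$.

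The only mild subtlety is handling indices where $a_n = 0$ (so that $b_n = 0$ also); these contribute $0$ to both $\limsup$'s and cause no issue. I do not expect any real obstacle here — the proof is essentially a one-line application of Stirling's formula combined with Cauchy--Hadamard.
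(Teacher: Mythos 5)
Your proof is correct, but it takes a genuinely different route from the paper's. Both arguments rest on the same underlying fact---that the ratio $c_n = \frac{\sqrt{\pi}\,\Gamma(\frac{n+d}{2})}{\Gamma(\frac{d}{2})\Gamma(\frac{n+1}{2})}$ grows only polynomially in $n$---but the paper establishes this without Stirling, using only monotonicity of the Gamma function and the functional equation $\Gamma(z+1)=z\Gamma(z)$ to get the two-sided bound $1 \le \Gamma(\frac{n+d}{2})/\Gamma(\frac{n+1}{2}) \le \big(\frac{n+d}{2}\big)^{\lceil d/2\rceil}$, and it never invokes Cauchy--Hadamard: it compares the series directly. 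For one inclusion it uses $|a_n| \le C_1^{-1}|b_n|$ together with absolute convergence inside the radius; for the other it fixes $0 \ne |x| < R$, sets $\epsilon = \sqrt{R/|x|} > 1$, and absorbs the polynomial factor $\big(\frac{n+d}{2}\big)^{\lceil d/2\rceil}$ into the geometric decay $\epsilon^{-n}$, while $\epsilon|x| < R$ keeps $\sum_n |a_n|(\epsilon|x|)^n$ finite. Your approach---Stirling's ratio asymptotic $\Gamma(z+\alpha)/\Gamma(z) \sim z^{\alpha}$ giving $c_n \sim C_d\, n^{(d-1)/2}$, hence $c_n^{1/n}\to 1$, combined with the standard limsup product rule inside Cauchy--Hadamard---is shorter and gives the sharper asymptotic constant, at the cost of invoking heavier machinery; the paper's argument is more elementary and self-contained. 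Your treatment of the indices with $a_n = 0$ is sound, since $|b_n| = c_n|a_n|$ holds for every $n$ when $c_n$ is read as the Gamma expression rather than literally as a quotient, and the limsup identity you use is valid also in the degenerate cases $\limsup |a_n|^{1/n} \in \{0,\infty\}$, so both zero and infinite radii are covered.
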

\begin{proof}
Denote by $r$ and $R$ the convergence radii of $f$ and $F$.
We aim to show that $r=R$.
First note that for $n>3$ we have due to the monotonicity of the gamma function and $\Gamma(z)z=\Gamma(z+1)$ that
\begin{equation*}
1\leq\frac{\Gamma(\frac{n+d}{2})}{\Gamma(\frac{n+1}{2})}= \frac{\Gamma(\frac{n+d}{2}-\lceil\frac{d}{2}\rceil)}{\Gamma(\frac{n+1}{2})}\prod_{k=1}^{\lceil\frac{d}{2}\rceil}\frac{n+d-2k}{2}\leq \Big(\frac{n+d}{2}\Big)^{\lceil\frac{d}{2}\rceil}
\end{equation*}
such that it there exist some $C_1,C_2>0$, independent of $n$, with
\begin{equation*}
C_1 |a_n|\leq |b_n|\leq C_1 \Big(\frac{n+d}{2}\Big)^{\lceil\frac{d}{2}\rceil} |a_n|\quad\text{for all }n\in\N.
\end{equation*}
Since power series are absolutely convergent in the interior of their convergence radius, this implies for all $|x|<r$  that
\begin{equation*}
\sum_{n=0}^\infty |a_n| |x|^n\leq \frac1{C_1}\sum_{n=0}^\infty |b_n| |x|^n<\infty
\end{equation*}
such that $F$  converges as well.
Therefore, we have that $R\geq r$.

Vice versa, let $0\neq |x|<R$ and denote $\epsilon=\sqrt{\frac{R}{|x|}}>1$. Then it holds that
\begin{equation*}
\sum_{n=0}^\infty |b_n| |x|^n\leq C_2\sum_{n=0}^\infty |a_n|\big(\frac{n+d}{2}\big)^{\lceil\frac{d}{2}\rceil}\epsilon^{-n} (\epsilon |x|)^n.
\end{equation*}
Since $\big(\frac{n+d}{2}\big)^{\lceil\frac{d}{2}\rceil}\epsilon^{-n}\to0$ as $n\to\infty$, it is bounded. Moreover, we have that $\epsilon |x|<R$. Using that $F$ is absolutely convergent for $|x|<R$, this yields that the right side of the above formula is finite. In particular, $f$ is convergent for $x$ and $r\geq R$.
\end{proof}

\begin{lemma}\label{lem:haessliche_integrale}
Let $r\in(-1,\infty)$ and $d\in\N$. Then it holds
\begin{equation*}
\int_0^1 t^r (1-t^2)^{\frac{d-3}{2}}\d t=\frac{\Gamma(\frac{d-1}{2})\Gamma(\frac{r+1}{2})}{2\Gamma(\frac{d+r}{2})}.
\end{equation*}
\end{lemma}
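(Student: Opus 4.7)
The plan is to recognize the integral as a Beta function in disguise and evaluate it via the standard Beta--Gamma relation.

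First I would apply the substitution $u = t^2$, so that $du = 2t\,dt$ and hence $dt = \tfrac{1}{2} u^{-1/2}\,du$, with $t^r = u^{r/2}$ and $(1-t^2)^{(d-3)/2} = (1-u)^{(d-3)/2}$. This converts the integral into
\begin{equation*}
\int_0^1 t^r (1-t^2)^{\frac{d-3}{2}}\,\d t = \frac{1}{2}\int_0^1 u^{\frac{r-1}{2}}(1-u)^{\frac{d-3}{2}}\,\d u.
\end{equation*}

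Next I would identify the right-hand side as $\tfrac{1}{2}\mathrm{B}\!\left(\tfrac{r+1}{2},\tfrac{d-1}{2}\right)$ via the definition $\mathrm{B}(x,y)=\int_0^1 u^{x-1}(1-u)^{y-1}\,\d u$, matching $x-1 = (r-1)/2$ and $y-1 = (d-3)/2$. Convergence of this Beta integral requires $x = (r+1)/2 > 0$ and $y = (d-1)/2 > 0$; the former is exactly the hypothesis $r > -1$, and the latter holds throughout the paper's regime $d \geq 2$ (the boundary case $d=2$ gives a well-defined improper integral since $(r+1)/2 > 0$).

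Finally, applying the classical Beta--Gamma identity $\mathrm{B}(x,y) = \Gamma(x)\Gamma(y)/\Gamma(x+y)$ with $x+y = (r+d)/2$ yields
\begin{equation*}
\frac{1}{2}\mathrm{B}\!\left(\tfrac{r+1}{2},\tfrac{d-1}{2}\right) = \frac{\Gamma(\tfrac{r+1}{2})\Gamma(\tfrac{d-1}{2})}{2\Gamma(\tfrac{d+r}{2})},
\end{equation*}
which is the claimed formula. No step here is a genuine obstacle; the only point requiring mild care is the convergence check at the endpoints, which is immediate from the hypothesis $r > -1$ (guaranteeing integrability at $u=0$) and from $d \geq 2$ being implicit from the context of Proposition~\ref{prop:sliced_kernels} (guaranteeing integrability at $u=1$).
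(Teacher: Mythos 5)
Your proof is correct and follows essentially the same route as the paper: the substitution $u=t^2$, identification with the Beta integral $\frac12\mathrm{B}\big(\tfrac{r+1}{2},\tfrac{d-1}{2}\big)$, and the Beta--Gamma identity. Your added convergence check (noting that $r>-1$ and implicitly $d\geq 2$ are needed, since $d=1$ would make the integral diverge at $t=1$) is a small refinement the paper omits, but otherwise the arguments coincide.
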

\begin{proof}
Using the substitution $s=t^2$ with $\frac{\d t}{\d s}=\frac12 s^{-\frac12}$, we obtain
\begin{align*}
\int_0^1 t^r (1-t^2)^{\frac{d-3}{2}}\d t&=\frac12\int_0^1s^{\frac{r}{2}}(1-s)^{\frac{d-3}{2}}s^{-\frac12}\d s\\
&=\frac12\int_0^1s^{\frac{r-1}{2}}(1-s)^{\frac{d-3}{2}}\d s=\Beta\Big(\frac{r+1}{2},\frac{d-1}{2}\Big)=\frac{\Gamma(\frac{d-1}{2})\Gamma(\frac{r+1}{2})}{2\Gamma(\frac{d+r}{2})},
\end{align*}
where $\Beta$ is the beta function.
\end{proof}

Now, we can provide the proof of our theorem.

\begin{proof}[Proof of  Theorem~\ref{thm:kernels_to_sliced_analytic}]
First, note that by Lemma~\ref{lem:power_series}, $F$ is globally convergent if and only if $f$ is globally convergent. 

Moreover, we obtain by Proposition~\ref{prop:sliced_kernels} that
\begin{align*}
F(s)=\frac{2\Gamma(\frac{d}{2})}{\sqrt{\pi}\Gamma(\frac{d-1}{2})}\int_0^1f(ts)(1-t^2)^{\frac{d-3}{2}}\d t,
\end{align*}
and using Lemma~\ref{lem:haessliche_integrale} further
\begin{equation*}
F(s) = \E_{\xi\sim \mathcal U_{\Sp^{d-1}}}[f(|P_\xi(x)|)]=\sum_{n=0}^\infty \frac{\Gamma(\frac{d}{2})\Gamma(\frac{n+1}{2})}{\sqrt{\pi}\Gamma(\frac{n+d}{2})}b_n\|x\|^n =\sum_{n=0}^\infty a_n\|x\|^n=F(\|x\|).
\end{equation*}
\end{proof}

\section{Additional Kernels}\label{app:laplace_hypergeom}

In this section, we provide additional information to the kernels in Table~\ref{tab:kernels}. More precisely, we express the function $f$ for the Mat\'ern kernel as hypergeometric function. This includes the Laplacian kernel which is the Mat\'ern kernel for $\nu=\frac12$ and $\beta=\frac{1}{\alpha}$.
Further, we derive the formula for the thin plate spline kernel \cite{D1975}.

\subsection{Mat\'ern and Laplacian Kernel}
The Mat\'ern kernel is defined for $\nu\not\in\Z$ via the basis function 
\begin{equation*}
F(x)=\frac{1}{\Gamma(\nu)2^{\nu-1}}\Big(\frac{\sqrt{2\nu}}{\beta}x\Big)^\nu K_\nu\Big(\frac{\sqrt{2\nu}}{\beta}x\Big),
\end{equation*}
where $K_\nu$ is the modified Bessel function of second kind. Moreover it can be continously extended to $\nu\in\Z$.
Recall that $K_\nu$ is defined by
\begin{equation*}
K_\nu(x)=\frac{\pi}{2}\frac{I_{-\nu}(x)-I_{\nu}(x)}{\sin(\nu\pi)},\quad I_\nu(x)=\sum_{n=0}^\infty\frac{x^{2n+\nu}}{n!\Gamma(n+\nu+1)2^{2n+\nu}}.
\end{equation*}
Thus, we have that
\begin{equation*}
F(x)=\frac{\pi}{\Gamma(\nu)2^{\nu}\sin(\nu\pi)}\Big(\sum_{n=0}^\infty\frac{\nu^{n}x^{2n}}{n!\Gamma(n-\nu+1)2^{n-\nu}\beta^{2n}}-\sum_{n=0}^\infty\frac{\nu^{n+\nu}x^{2n+2\nu}}{n!\Gamma(n+\nu+1)2^{n}\beta^{2n+2\nu}}\Big).
\end{equation*}
Using Proposition~\ref{prop:sliced_kernels} and Lemma~\ref{lem:haessliche_integrale} similarly as in the proof of Theorem~\ref{thm:kernels_to_sliced_analytic}, we obtain that the kernels $K(x,y)=F(\|x-y\|)$ and $\mathrm{k}(x,y)=f(|x-y|)$ fulfill \eqref{eq:sliced_kernel}, where
{\small
\begin{align*}
f(x)&=\frac{\pi^{3/2}}{\Gamma(\frac{d}{2})\Gamma(\nu)2^{\nu}\sin(\nu\pi)}\Big(\sum_{n=0}^\infty\frac{\Gamma(\frac{2n+d}{2})\nu^{n}x^{2n}}{\Gamma(\frac{2n+1}{2})n!\Gamma(n-\nu+1)2^{n-\nu}\beta^{2n}}\\
&\quad\phantom{\frac{\pi^{3/2}}{\Gamma(\frac{d}{2})\Gamma(\nu)2^{\nu}\sin(\nu\pi)}}-\sum_{n=0}^\infty\frac{\Gamma(\frac{2n+2\nu+d}{2})\nu^{n+\nu}x^{2n+2\nu}}{\Gamma(\frac{2n+2\nu+1}{2})n!\Gamma(n+\nu+1)2^{n}\beta^{2n+2\nu}}\Big)\\
&=\frac{\pi}{\Gamma(\nu)\Gamma(1-\nu)\sin(\nu\pi)}\sum_{n=0}^\infty\frac{\Gamma(n+\frac{d}{2})}{\Gamma(\frac{d}{2})}\frac{\Gamma(\frac12)}{\Gamma(n+\frac{1}{2})} \frac{\Gamma(1-\nu)}{\Gamma(n+1-\nu)} \frac{(\frac{\nu x^2}{2\beta^2})^n}{n!}\\
&\quad-\frac{\pi^{3/2}\Gamma(\frac{2\nu+d}{2})x^{2\nu}\nu^\nu}{\Gamma(\frac{d}{2})\Gamma(\nu)\Gamma(\nu+1)\Gamma(\nu+\frac{1}{2})2^{\nu}\sin(\nu\pi)\beta^{2\nu}}\sum_{n=0}^\infty\frac{\Gamma(n+\frac{2\nu+d}{2})}{\Gamma(\frac{2\nu+d}{2})}
\frac{\Gamma(\frac{2\nu+1}{2})}{\Gamma(n+\frac{2\nu+1}{2})}
\frac{\Gamma(\nu+1)}{\Gamma(n+\nu+1)}
\frac{(\frac{\nu x^2}{2\beta^2})^n}{n!}.
\end{align*}}
Using the definition of the hypergeometric function
\begin{equation*}
{_1}F_2(a;b,c;x)=\sum_{n=0}^\infty\frac{\Gamma(n+a)}{\Gamma(a)}\frac{\Gamma(b)}{\Gamma(n+b)}\frac{\Gamma(c)}{\Gamma(n+c)}\frac{x^n}{n!}
\end{equation*}
and the identities $\Gamma(\nu)\Gamma(1-\nu)=\frac{\pi}{\sin(\nu\pi)}$ and $\Gamma(\nu+\frac12)\Gamma(\nu+1)=2^{-2\nu}\pi^{\frac12}\Gamma(2\nu+1)$, we obtain that
\begin{align*}
f(x)&={_1}F_2(\tfrac{d}{2};\tfrac12,1-\nu;\tfrac{\nu x^2}{2\beta^2})-\frac{\Gamma(1-\nu)\Gamma(\nu+\frac{d}{2})x^{2\nu}(2\nu)^\nu}{\Gamma(\frac{d}{2})\Gamma(2\nu+1)\beta^{2\nu}}{_1}F_2(\nu+\tfrac{d}2;\nu+\tfrac12,\nu+1;\tfrac{\nu x^2}{2\beta^2}).
\end{align*}

\paragraph{Special values of $\nu$}
For evaluating the Mat\'ern kernel with $\nu=p+\frac12$, we implement the Mat\'ern kernel via the identity
\begin{equation*}
F(x)=\exp\Big(-\frac{\sqrt{2p+1}x}{\beta}\Big)\frac{p!}{(2p)!}\sum_{n=0}^p\frac{(p+n)!}{n!(p-n)!}\Big(\frac{2\sqrt{2p+1}x}{\beta}\Big)^{p-n}.
\end{equation*}
For $\nu=\frac12$, we obtain the Laplacian kernel with $\alpha=\frac1\beta$. In this case, the function $f$ can be expressed as
\begin{align*}
f(x)
&={_1}F_2(\tfrac{d}{2};\tfrac12,\tfrac12;\tfrac{\alpha^2x^2}{4})-\frac{\sqrt{\pi}\alpha x\Gamma(\frac{d+1}{2})}{\Gamma(\frac{d}{2})}{_1}F_2(\tfrac{d+1}{2};1,\tfrac32;\tfrac{\alpha^2x^2}{4})
\end{align*}

\subsection{Thin Plate Spline Kernel}

Let $\mathrm{k}(x,y)=f(|x-y|)$ with $f(x)=d x^2\log(x)-C x^2$ where
\begin{equation}\label{eq:thin_pline_C}
C\coloneqq\frac{4d\Gamma(\frac{d+2}{2})}{\sqrt{\pi}\Gamma(\frac{d-1}{2})}\int_0^1t^2\log(t)(1-t^2)^{\frac{d-3}{2}} \d t=-\frac{d}{2}\big(H_\frac{d}{2}-2+\log(4)\big),
\end{equation}
where $H_x=\int_0^1\frac{1-t^x}{1-t}\d t$ is the harmonic number and where the equality follows from the integral identity \cite[4.253.1]{GR2007} which reads as
\begin{equation*}
\int_0^1t^2\log(t)(1-t^2)^{\frac{d-3}{2}} \d t=B(\tfrac{3}{2},\tfrac{d-1}{2})(\psi(\tfrac32)-\psi(\tfrac{d-2}{2}))=-\frac{\sqrt{\pi}\Gamma(\frac{d-1}{2})(H_\frac{d}{2}-2+\log(4))}{8\Gamma(\frac{d+2}{2})}.
\end{equation*}
Then we have by Proposition~\ref{prop:sliced_kernels} that $K$ given by \eqref{eq:sliced_kernel} is given by $K(x,y)=F(\|x-y\|)$ with
\begin{equation*}
F(s)=\frac{2\Gamma(\frac{d}{2})}{\sqrt{\pi}\Gamma(\frac{d-1}{2})}\int_0^1\big(d s^2t^2\log(st)-Cs^2t^2\big)(1-t^2)^{\frac{d-3}{2}}\d t
\end{equation*}
Using the linearity of the integral and $\log(st)=\log(s)+\log(t)$, this is equal to
\begin{align*}
&\frac{2d\Gamma(\frac{d}{2})}{\sqrt{\pi}\Gamma(\frac{d-1}{2})}\Big(s^2\log(s)\int_0^1t^2(1-t^2)^{\frac{d-3}{2}}\d t+s^2\int_0^1t^2\log(t)(1-t^2)^{\frac{d-3}{2}}\d t\Big)\\
&-Cs^2\frac{2\Gamma(\frac{d}{2})}{\sqrt{\pi}\Gamma(\frac{d-1}{2})}\int_0^1t^2(1-t^2)^{\frac{d-3}{2}}\d t
\end{align*}
By inserting $C$, the formula from Lemma~\ref{lem:haessliche_integrale} for the integrals and using $\frac{\Gamma(\frac{d+2}{2})}{\Gamma(\frac{d}{2})}=\frac{d}{2}$ this is equal to
\begin{equation*}
s^2\log(s)+s^2 \frac{C}{d} - s^2\frac{C}{d}=s^2\log(s).
\end{equation*}
Thus, we can summarize $F(s)=s^2\log(s)$ such that $K$ is the thin plate spline kernel.

\subsection*{Acknowledgements}
We would like to thank Robert Gruhlke, Tim Jahn, Aleksei Kroshnin, Gabriele Steidl, Michael Quellmalz and Christian Wald for fruitful discussions.
Additionally, we would like to thank the anonymous reviewers for their valuable comments.

Funding within the EPSRC programme grant ``The Mathematics of Deep Learning'' with reference EP/V026259/1 is gratefully acknowledged. 
For the purpose of open access, the author has applied a Creative Commons Attribution (CC BY) licence to any Author Accepted Manuscript version arising.

\bibliographystyle{abbrv}
\bibliography{ref}

\end{document}